\documentclass{amsart}
\usepackage{amsmath}
\usepackage{amssymb}
\usepackage{array}
\usepackage{amscd}
\usepackage{tikz-cd}
\usepackage{tikz}
\usepackage{tkz-graph}
\usepackage[all]{xy}
\usepackage{float}
\usepackage{subcaption}
\usetikzlibrary{calc}

\newcommand*{\DashedArrow}[1][]{\mathbin{\tikz [baseline=-0.25ex,-latex, dashed,#1] \draw [#1] (0pt,0.5ex) -- (1.3em,0.5ex);}}%

\numberwithin{equation}{section}
\newtheorem{Th}[subsection]{Theorem}
\newtheorem*{Th*}{Theorem}
\newtheorem{Lemma}[subsection]{Lemma}
\newtheorem{Prop}[subsection]{Proposition}
\newtheorem{Conjecture}[subsection]{Conjecture}
\newtheorem{Cor}[subsection]{Corollary}
\theoremstyle{definition}
\newtheorem{definition}[subsection]{Definition}
\newtheorem*{definition*}{Definition}
\newtheorem{Remark}[subsection]{Remark}
\newtheorem{Example}[subsection]{Example}
\newcommand{\comm}[1]{}

\let \P \relax
\newcommand{\P}{\mathbb{P}}
\newcommand{\F}{\mathbb{F}}
\newcommand{\Cl}{\mathcal{C}}
\newcommand{\Dl}{\mathcal{D}}

\usepackage{color}
\usepackage[normalem]{ulem}
\definecolor{DarkGreen}{rgb}{0,0.5,0.1} 

\newcommand\soutD{\bgroup\markoverwith
{\textcolor{DarkGreen}{\rule[.5ex]{2pt}{1pt}}}\ULon}

\definecolor{darkgreen}{RGB}{50,162,69}  

\makeatletter
\newcommand{\romsmall}[1]{\romannumeral #1}
\newcommand*{\rom}[1]{\expandafter\@slowromancap\romannumeral #1@}
\makeatother

\begin{document}
 
\title{Composition of Sarkisov links between del Pezzo surfaces}
\author{Anastasia V.~Vikulova}
\address{{\sloppy
\parbox{0.9\textwidth}{
Department of Mathematics and Computer Science, University of Basel, Spiegelgasse
1, 4051 Basel, Switzerland
}\bigskip}}
\email{vikulovaav@gmail.com}
\date{}
\thanks{}
\maketitle

\begin{abstract}
We prove that for any two birationally equivalent  del Pezzo surfaces of Picard rank~one over a perfect field there is a birational map between them such that it is decomposed in a composition of at most two Sarkisov links. 

\end{abstract}

\tableofcontents

\section{Introduction}

Recall that the Sarkisov program states that any birational map between two Mori fibre spaces can be decomposed in a composition of elementary transformations which are called Sarkisov links. More details about Sarkisov program over~$\mathbb{C}$ in arbitrary dimension can be found in~\cite{Hacon}. For Sarkisov program over~$\mathbb{C}$ in dimension $3$ we refer the readers to~\cite{CortiSarkisov}. For Sarkisov program over arbitrary perfect field in dimension $2$ we refer the readers to~\cite{Isk}.

In the paper~\cite{Corti} A.~Corti proposed the conjecture in the Sarkisov program in dimension $3.$ Concerning terminal Fano varieties of Picard rank one (which are examples of Mori fibre spaces of dimension $3$) it is anticipated the following. 

\begin{Conjecture}[{\cite[Conjecture 2.1]{Corti}}]\label{conj:Corti}

There exists a constant $N$ such that for any birationally equivalent terminal Fano  varieties $X$ and $X'$ of dimension $3$ of Picard rank one over $\mathbb{C}$ there exists a birational map $f  \colon X \DashedArrow[->,densely dashed    ] X'$ such that it is a composition of $n$ Sarkisov links, where $n \leqslant N.$

\end{Conjecture}

Unfortunately, we cannot expect that analogue of Conjecture~\ref{conj:Corti} holds in case of dimension~$4$ and higher. For this by~\cite{Corti}  it is enough to show that in dimension~$4$ there is a non-constructible subset of birational Fano varieties of Picard rank $1$ in the coarse moduli space. Consider a coarse moduli space $\mathcal{C}$ of smooth cubic hypersurfaces of dimension~$4$ over $\mathbb{C}.$ In~\cite[Theorem 6.8]{Katzarkov} it was proved that a very general cubic hypersurface of dimension~$4$ in $\mathbb{P}^5_{\mathbb{C}}$ is not rational. From Kuznetsov's conjecture (see~\cite[Conjecture 1.1]{Kuznetsov})  it follows that there are countably infinite many irreducible divisors  in $\mathcal{C}$ of rational  cubic hypersurfaces of dimension~$4$ (see~\cite{Hassett},~\mbox{\cite[\S 3.3]{Hassettlecture}},~\cite{Russo} for more details). So this subset is not constructible.

Our goal is to prove the analogue of Conjecture~\ref{conj:Corti} in case of dimension $2$ and arbitrary perfect field. 

\begin{Th}\label{th:Sarkisovperfect}
Let $X$ and $X'$ be smooth del Pezzo surfaces of Picard rank one over a perfect field and assume that $X$ is birational to $X'$. Then there exists a birational map $f  \colon X \DashedArrow[->,densely dashed    ] X'$ such that it is a composition of at most~\mbox{$N=2$} Sarkisov links. Moreover, this $N$ is optimal, that is there is a perfect field~$\mathbb{K}$ and two birationally equivalent  del Pezzo surfaces $X$ and $X'$  of Picard rank one over~$\mathbb{K}$ such that any birational map $f \colon X \DashedArrow[->,densely dashed    ] X'$ is a composition of at least two Sarkisov links.

\end{Th}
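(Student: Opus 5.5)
The plan is to use Iskovskikh's classification of Sarkisov links between rank-one del Pezzo surfaces over a perfect field~\cite{Isk}. A link starting at a del Pezzo surface $X$ of Picard rank one and degree $d=K_X^2$ is determined by blowing up a closed point of degree $r$ in general position, where $r<d$ is required for the blow-up to again be a del Pezzo surface. The target of a link of type~II is a del Pezzo surface $X'$ of Picard rank one. The classification forces $d\geqslant 5$ unless the link is a Bertini or Geiser involution, or a degree-changing link between degrees $(8,6)$, $(9,6)$, $(9,8)$, $(8,5)$, $(6,5)$ and so on. Surfaces of degree $\leqslant 4$ and Picard rank one are birationally superrigid or have only self-links. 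Consequently, if $X$ is birational to $X'$ with $K_X^2\leqslant 4$, then $X\cong X'$ and the map can be taken to be the identity, or a single self-link.

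The core case is therefore $d,d'\in\{5,6,8,9\}$; there are no rank-one surfaces of degree $7$. The strategy is to organize the rational rank-one surfaces through a \emph{hub}. Every del Pezzo surface of degree $5$ with $\rho=1$ has a rational point and admits a link to $\mathbb{P}^2$ (blow up the point, contract to $\mathbb P^2$); the same holds for degree $8$ forms of $\mathbb P^1\times\mathbb P^1$ carrying a rational point, and for degree $6$ surfaces with a rational point. So any two surfaces in the class of $\mathbb{P}^2$ are joined by at most two links through $\mathbb P^2$. In the nonrational case the reference surface should be a Severi--Brauer surface $S$ or a quadric form without points. I would check, using the link tables (degree $9\to 6$ via a point of degree $3$, $9\to 8$ via a point of degree $6$, $8\to 6$ via a point of degree $2$, and so on), that every rank-one surface birational to $S$ is linked to $S$ or to the opposite Severi--Brauer surface $S^{op}$ in one step. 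Since $S$ and $S^{op}$ are themselves linked directly through a point of degree $3$, I then have to verify that any two such surfaces are joined by at most two links, possibly through a hub different from $S$, such as a degree $6$ surface. This case bookkeeping is the main obstacle. The difficulty is the chain Severi--Brauer $\to$ degree $6$ $\to$ quadric $\to$ \dots, where one must show that the existence of a three-link path always yields a two-link one. I would attack this by directly analyzing which degree-$r$ points exist on each surface, using for instance the fact that a nontrivial Severi--Brauer surface has points of degree $3$ and $6$ only in degrees divisible by $3$. Invariants such as the Brauer class and the existence of points of small degree should then force a common neighbour.

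For optimality, I would exhibit over a suitable field (for example a field with a cubic Galois extension and a nontrivial Brauer class of index $3$) a Severi--Brauer surface $S$ and a degree $8$ surface $Q$ birational to it, with no direct link between them. The natural candidate is $Q$ obtained from $S^{op}$, rather than $S$, via a point of degree $6$. Using the classification, one then lists all links out of $S$ and checks that none lands on $Q$. This reduces to a Galois-cohomological check that the relevant degree-$6$ point exists on $S^{op}$ but gives a non-isomorphic target from $S$.
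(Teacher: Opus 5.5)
There is a genuine gap, and it sits exactly where the paper does its real work. Your hub claim is false in degree~$6$. Blowing up a $\mathbb{K}$-point on a minimal rational del Pezzo surface of degree~$6$ gives a degree-$5$ surface of Picard rank~$2$. Its other extremal contraction blows down a point of degree~$3$ onto a minimal surface of degree~$8$ (the link II:1:3), not onto $\mathbb{P}^2$. In Iskovskikh's classification there is no link at all between $\mathcal{D}_6$ and $\mathbb{P}^2$; this is precisely the paper's optimality example. Routing through $\mathbb{P}^2$ therefore costs $2$ links for $(6,9)$, $3$ for $(6,5)$ and $(6,8)$, and $4$ for $(6,6)$, and your proposal has no argument bringing the last three down to $2$.

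The missing input is the following.
A minimal rational degree-$6$ surface is determined up to isomorphism by the pair $(\mathbf{K},\mathbf{L})$ from its Severi--Brauer data. The link II:1:3 lands on $\mathrm{R}_{\mathbf{K}/\mathbb{K}}(\mathbb{P}^1)$ with the same $\mathbf{K}$. A link II:2:2 centred at a point of degree~$2$ with splitting field $\mathbf{K}'$ replaces $\mathbf{K}$ by $\mathbf{K}'$ and keeps $\mathbf{L}$. A link II:3:3 centred at a point of degree~$3$ with splitting field $\mathbf{L}'\neq\mathbf{L}$ replaces $\mathbf{L}$ by $\mathbf{L}'$ and keeps $\mathbf{K}$. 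Finally, the required points are in Sarkisov general position. With this, $(6,6)$ and $(6,8)$ take two links: change $\mathbf{K}$, then change $\mathbf{L}$ or pass to degree~$8$. The pair $(6,5)$ also takes two, because every point of degree~$2$ on a minimal quintic del Pezzo surface is in Sarkisov general position. So the link II:2:5 reaches every $\mathrm{R}_{\mathbf{K}'/\mathbb{K}}(\mathbb{P}^1)$, in particular the one adjacent to the given degree-$6$ surface.

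Your non-rational analysis and your optimality example rest on links that do not exist.
There is no link $9\to 6$: a point of degree~$3$ on a degree-$9$ surface gives a $9\to 9$ link, and so does a point of degree~$6$. The link $8\to 6$ blows up a point of degree~$3$, while a point of degree~$2$ on a minimal degree-$8$ surface leads to a conic bundle. There is no link between degrees $6$ and $5$.

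A nontrivial Severi--Brauer surface has only points of degree divisible by~$3$. So its only links are II:3:3 and II:6:6, both ending on Severi--Brauer surfaces. Hence no degree-$8$ surface is birational to it, your candidate $Q$ does not exist, and by Shramov's result (Lemma~\ref{lemma:SB}) one link always suffices there.

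The case you flag as the main obstacle is in fact settled by citation. Severi--Brauer surfaces are handled as just said. Any minimal del Pezzo surface birational to a pointless degree-$8$ surface is isomorphic to it (Trepalin). Pointless degree-$6$ surfaces are linked only to degree-$6$ surfaces, with at most one link (Yasinsky, Proposition~\ref{prop:yas}).

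The correct witness for optimality is $\mathbb{P}^2$ together with a minimal rational degree-$6$ surface, which exists, e.g., over $\mathbb{Q}$ (Example~\ref{example:dP69}). Finally, ``only self-links'' is inaccurate in degree~$4$, since a rational point gives a type~I link to a conic bundle. The conclusion $X\cong X'$ for $d\leqslant 4$ needs the cited results of Iskovskikh and Shramov--Trepalin.
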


\begin{Remark}
Let us justify why we only consider del Pezzo surfaces among all Mori fibre spaces of dimension $2$ in Theorem~\ref{th:Sarkisovperfect}.  Consider Hirzebruch surfaces~$\mathbb{F}_n.$ It is well known that all rational Hirzebruch surfaces are birationally equivalent to each other. If we consider Hirzebruch surfaces over algebraically closed field, any birational map between $\mathbb{F}_n$ and $\mathbb{F}_m$ is a composition of at least $|m-n|$ Sarkisov links, because  Sarkisov links between Hirzebruch surfaces are 
\begin{equation}\label{eq:hirzebruch}
\mathbb{F}_n  \DashedArrow[->,densely dashed    ] \mathbb{F}_{n'},
\end{equation}
where $n'=n \pm 1.$

Let us consider non-algebraically closed field, for example, the field of real numbers $\mathbb{R}.$ In this case  we have  that  Sarkisov links between Hirzebruch surfaces are~\eqref{eq:hirzebruch}, where $|n-n'| \leqslant 2.$ So any birational map between   $\mathbb{F}_n$ and $\mathbb{F}_m$ is a composition of at least $\lfloor \frac{m-n}{2} \rfloor$ Sarkisov links. So in the case of conic bundles the analogue of Theorem~\ref{th:Sarkisovperfect} is false. Note, however, that over $\mathbb{Q}$ for any non-negative~$n$ and $m$ there is a birational map $f \colon \mathbb{F}_n  \DashedArrow[->,densely dashed    ] \mathbb{F}_{m},$ such that $f$ is a Sarkisov link, since for any positive integer $r$ there is a Galois orbit of length $r$ of a point on the Hirzebruch surfaces in general position.

Finally, note that Sarkisov program for del Pezzo surfaces of Picard rank $1$ over algebraically closed field in dimension~$2$ is trivial, since the only del Pezzo surface of Picard rank~$1$ is the projective plane~$\mathbb{P}^2.$

\end{Remark}

Now let us describe the structure of the paper.
In Section~\ref{section:lemmas} we give some general facts which we need and recall the definition of Sarkisov links.
In Section~\ref{section:dP6} we recall some basic facts about del Pezzo surfaces of degree $6.$ 
In Section~\ref{section:invariants} we study invariants of del Pezzo surfaces of high degree.
In Section~\ref{section:existencepointssarkisov} we study the existence of points in Sarkisov general position on minimal del Pezzo surfaces of degree~$5$ and~$6.$
In Section~\ref{section:existencesarkisov} we study  Sarkisov links of type~\rom{2} between minimal rational del Pezzo surfaces. 
In Section~\ref{section:sarkisov}  we estimate the minimal number of Sarkisov links in the composition of a birational map between minimal rational del Pezzo surfaces.
In Section~\ref{section:proof} we prove Theorem~\ref{th:Sarkisovperfect}.

\vspace{0.5cm}

\textbf{Notation.} We work over a perfect field~$\mathbb{K}.$ All varieties are smooth.  Let $X$ be a variety (or a scheme) defined over the field $\mathbb{K}.$ If $\mathbb{K} \subset \mathbf{L}$ is an extension of $\mathbb{K},$ then we will denote by $X_{\mathbf{L}}$ the variety (or the scheme)
$$
X_{\mathbf{L}}=X \times_{\mathrm{Spec}(\mathbb{K})}\mathrm{Spec}(\mathbf{L})
$$

\noindent over $\mathbf{L}.$    By $\overline{\mathbb{K}}$ we denote the algebraic closure of $\mathbb{K}.$ We say that a zero-dimensional scheme $P$ defined over $\mathbb{K}$ is a point of degree $d,$ if $P_{\overline{\mathbb{K}}}$ is~$\mathrm{Gal}(\overline{\mathbb{K}}/\mathbb{K})$-orbit of length~$d.$ We say that $P$ is a $\mathbb{K}$-point, if $P_{\overline{\mathbb{K}}}$ is a point.   Given a finite field extension~\mbox{$\mathbb{K} \subset \mathbf{L}$} and a scheme $Y$ over $\mathbf{L}$ denote by  $\mathrm{R}_{\mathbf{L}/\mathbb{K}}(Y)$  its Weil restriction of scalars. Recall that $\mathrm{R}_{\mathbf{L}/\mathbb{K}}$ is a functor from the category of schemes over $\mathbf{L}$ to the category of schemes over $\mathbb{K},$ which is right adjoint to the fiber product with~$\mathrm{Spec}(\mathbf{L}).$  By $\mathfrak{S}_n$ we denote the symmetric group of degree $n,$  by $\mathfrak{A}_n$ we denote the alternating group of degree $n$ and by $\mathfrak{D}_n$ we denote the dihedral group of order~$2n.$

\vspace{0.5cm}

\textbf{Acknowledgment.} The author is deeply grateful to  C.~A.~Shramov for his  intellectual generosity, constant support, guidance and belief in author's abilities.  Also the author is profoundly indebted to  A.~S.~Trepalin for constant support, constant opportunity to learn from him, for carefully reading this paper and helping to make it more readable, and for endless patience. The author is profoundly indebted to K.~V.~Kvitko and A.~A.~Kuznetsova for their constant support and friendship.

The author is supported by the project SERI REF-1131-552105.

\section{Preliminaries}\label{section:lemmas}

In this section we collect some auxiliary facts and recall general theory about Sarkisov links between del Pezzo surfaces. The first fact is the well-known theorem of Lang and Nishimura.

\begin{Th}[{see e.g.~\cite[Lemma~1.1]{VA}}]
\label{theorem:Lang-Nishimura}
Let $X$ and $Y$ be smooth projective varieties over a field $\mathbb{K}$.
Suppose that $X$ is birational to~$Y$. Then $X$ has a $\mathbb{K}$-point if and only if~$Y$ has a $\mathbb{K}$-point.
\end{Th}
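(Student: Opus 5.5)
The statement to prove is the Lang–Nishimura theorem (Theorem~\ref{theorem:Lang-Nishimura}). By symmetry it suffices to show one implication: if $X$ has a $\mathbb{K}$-point and $f\colon X \DashedArrow[->,densely dashed] Y$ is a rational map to a proper variety $Y$, then $Y$ has a $\mathbb{K}$-point. We only need $X$ smooth and $Y$ proper. The plan is induction on $n=\dim X$, using blow-ups of smooth points.

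If $f$ is defined at the given point $x\in X(\mathbb{K})$, then $f(x)$ is a $\mathbb{K}$-point of $Y$ and we are done. Otherwise, let $\pi\colon \widetilde X\to X$ be the blow-up of $X$ at $x$. Since $x$ is a smooth $\mathbb{K}$-point, the exceptional divisor is $E\cong \mathbb{P}^{n-1}_{\mathbb{K}}$. The scheme $\widetilde X$ is smooth and contains $E$ as a smooth divisor.

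Let $\widetilde f=f\circ\pi$, and consider the generic point $\eta$ of $E$. The local ring $\mathcal{O}_{\widetilde X,\eta}$ is a discrete valuation ring, since $\widetilde X$ is normal and $E$ is a divisor. Because $Y$ is proper, the valuative criterion extends $\widetilde f$ over $\eta$. Hence $\widetilde f$ is defined on a dense open subset of $E$, and restricting gives a rational map $E\cong\mathbb{P}^{n-1}_{\mathbb{K}} \DashedArrow[->,densely dashed] Y$.

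The variety $\mathbb{P}^{n-1}_{\mathbb{K}}$ is smooth of dimension $n-1$ and has $\mathbb{K}$-points, so induction applies to this rational map. The base case is $\dim X=1$. There $X$ is a smooth curve, a rational map to a proper $Y$ extends everywhere, and the image of the given $\mathbb{K}$-point is a $\mathbb{K}$-point of $Y$. Alternatively, in dimension $0$ the map is a morphism on a $\mathbb{K}$-point.

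The main obstacle is the extension step along $E$. One must use that the local ring at the generic point of a divisor on a normal scheme is a DVR, and that the valuative criterion of properness then gives an extension over a neighbourhood of $\eta$, since $Y$ is of finite type. Smoothness of $X$ at $x$ is exactly what makes $E$ a projective space over $\mathbb{K}$, which is what keeps the induction going.
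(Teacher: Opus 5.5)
Your proof is correct. The paper gives no argument of its own here: it only quotes the result from the literature (\cite[Lemma~1.1]{VA}). So there is no internal proof to compare with, and your proposal supplies the classical Nishimura argument that stands behind the citation.

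The steps are all sound:
\begin{itemize}
\item the induction is on $\dim X$;
\item because the $\mathbb{K}$-point is smooth, the exceptional divisor is $E\cong\mathbb{P}^{n-1}_{\mathbb{K}}$;
\item the map extends over the discrete valuation ring $\mathcal{O}_{\widetilde X,\eta}$ by the valuative criterion;
\item the extension spreads to an open neighbourhood of $\eta$ because $Y$ is of finite type, and it agrees with $\widetilde f$ on the overlap because $Y$ is separated;
\item one then restricts to a dense open subset of $E$.
\end{itemize}

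The choice that makes the induction close up is to prove the statement for arbitrary rational maps from a smooth variety with a $\mathbb{K}$-point to a proper variety, rather than for birational maps between smooth projective varieties. This matters because the induced map $E \dashrightarrow Y$ is in general far from birational. As a by-product you get the stronger form of the theorem: only a smooth $\mathbb{K}$-point on $X$ and properness of $Y$ are needed. The paper uses less than this.
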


It is well-known fact that every smooth del Pezzo surface of degree $5$ always has a $\mathbb{K}$-point.

\begin{Th}[{see, for instance,~\cite{Barron} or~\cite{Swin}}]\label{th:dp5point}
Let $X$ be a del Pezzo surface of degree $5$ over a field $\mathbb{K}.$ Then $X$ has a $\mathbb{K}$-point.
\end{Th}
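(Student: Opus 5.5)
The statement to prove is Theorem~\ref{th:dp5point}: every del Pezzo surface $X$ of degree $5$ over $\mathbb{K}$ has a $\mathbb{K}$-point. I would follow Swinnerton-Dyer's approach, splitting into the case of infinite $\mathbb{K}$ and the case of finite $\mathbb{K}$.

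\textbf{Infinite $\mathbb{K}$.} Over $\overline{\mathbb{K}}$, the surface $X$ is the blow-up of $\mathbb{P}^2$ in four points in general position. Its ten $(-1)$-curves form the Petersen graph, and the Galois group acts on this graph through a subgroup of $\mathfrak{S}_5$. I would use the five conic bundle structures on $X_{\overline{\mathbb{K}}}$, which are permuted by Galois. Equivalently, I would use the five ``sets of four disjoint lines''. The aim is to produce some Galois-stable configuration that can be contracted or projected in a way defined over $\mathbb{K}$.

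The cleanest route I know works as follows.
\begin{itemize}
\item[(1)] Consider the anticanonical embedding $X\subset\mathbb{P}^5$ of degree $5$. By Riemann--Roch, the space of quadrics through $X$ has dimension $5$.
\item[(2)] Take a general $\mathbb{K}$-rational $2$-plane $\Pi\subset\mathbb{P}^5$. Then $\Pi\cap X$ is empty, so instead I would use linear sections directly. A general $\mathbb{K}$-rational codimension-$2$ linear section $X\cap H_1\cap H_2$ is a $0$-cycle of degree $5$, hence $X$ has a point of odd degree. A general hyperplane section is a smooth genus-one curve of degree $5$.
\item[(3)] To upgrade this to a $\mathbb{K}$-point, I would project from a Galois orbit and show that odd degree forces a rational point. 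This is the key step.
\end{itemize}

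For the key step (3), I would take a closed point $P$ of minimal degree $d$. If $d>1$, I would reduce $d$ by a degree-lowering construction:
\begin{itemize}
\item For $d=2$ or $d=3$ (in general position), the linear span of the $\overline{\mathbb{K}}$-points of $P$ meets $X$ residually in a point of smaller degree. This uses that $X$ is cut out by quadrics, so a line meets $X$ in at most $2$ points or lies on it. A $\mathbb{K}$-line through a degree-$2$ point lies in $X$, and then it is a Galois-stable $(-1)$-curve isomorphic to $\mathbb{P}^1_{\mathbb{K}}$, which has $\mathbb{K}$-points.
\item In general, I would instead blow up $P$ when it is in general position. This gives a del Pezzo surface of degree $5-d$, and I would apply the birational map of the Sarkisov link to lower the degree. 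Lang--Nishimura (Theorem~\ref{theorem:Lang-Nishimura}) transfers points along these maps.
\end{itemize}

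A more robust alternative is the Galois-cohomological argument. Over $\mathbb{K}$, the surface $X$ contains a Galois-stable set of disjoint $(-1)$-curves, or else $\mathrm{Aut}(X_{\overline{\mathbb{K}}})=\mathfrak{S}_5$ gives a twisting cocycle, and the relevant twisted forms are classified by $H^1(\mathbb{K},\mathfrak{S}_5)$, that is, by étale algebras of degree $5$. Then $X$ can be described explicitly as a $\mathbb{K}$-form of $\overline{M}_{0,5}$, namely the space of degree-$5$ étale-algebra configurations, and one writes down an explicit $\mathbb{K}$-point, for example a boundary point coming from a $\mathbb{K}$-rational stratum.

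\textbf{Finite $\mathbb{K}$.} The Weil conjectures (Lefschetz trace formula) give $\#X(\mathbb{F}_q)=q^2+q\,\mathrm{tr}(F|\mathrm{Pic})'+1$. This is $q^2+aq+1$ with $a\ge -2$ for rank-$5$ lattice actions, hence positive.

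\textbf{Main obstacle.} I expect the main obstacle to be the uniform treatment of all Galois actions, i.e.\ showing that minimal forms without a rational $(-1)$-configuration still carry a point. I would first try the explicit $\overline{M}_{0,5}$-twist description, since there rational points are visible from the moduli interpretation.
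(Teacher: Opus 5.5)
The paper gives no proof of this statement; it quotes it from Barron and Swinnerton-Dyer. Your proposal therefore has to stand on its own, and as written it does not. The one step with real content is producing a $\mathbb{K}$-point on a minimal quintic del Pezzo surface over an infinite field. You never carry it out, and the mechanisms you offer for it fail.

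\textbf{Step (3).} Consider first a point of degree $2$. The line it spans is $\mathbb{K}$-rational and, since $X$ is cut out by quadrics, meets $X$ only in those two geometric points, so there is no residual point. Nor does the line lie in $X$: lines on the anticanonical model are exactly the $(-1)$-curves, and a $\mathbb{K}$-rational one cannot exist on a minimal $X$. For a point of degree $3$ in general position, the plane it spans meets $X$ only in those three points, because the five quadrics cut out the full net of conics through three non-collinear points.

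The ``blow up and follow a Sarkisov link'' bullet says neither where the link goes nor why its target has a point. Blowing up points of degree $3$ or $4$ on a minimal $X$ leads back to quintic del Pezzo surfaces. The degree-$1$ link presupposes the point you want, and Lang--Nishimura transports points but does not create them. The degree-$2$ link lands on a degree-$8$ surface carrying a point of degree $5$, which an odd-degree argument could exploit. But you have no point of degree $2$ to start from: your codimension-two section may be a single point of degree $5$.

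\textbf{The twisting alternative.} This is the right framework: $X$ is the twist of $\overline{M}_{0,5}$ by the degree-$5$ \'etale algebra $A$ recording the Galois action on the five markings. Your proposed finish, a boundary point, fails exactly where it matters. The boundary consists of the ten curves $D_{ij}$, meeting in fifteen points indexed by pairs of disjoint pairs. A $\mathbb{K}$-point on the twisted boundary would force the Galois image in $\mathfrak{S}_5$ to stabilize some $\{i,j\}$ or fix some index. For minimal $X$ this image is transitive on $\{1,\dots,5\}$: if it fixed $m$, the four pairwise disjoint curves $D_{im}$ would form a Galois-stable contractible set. So the twisted boundary has no $\mathbb{K}$-points at all.

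The missing idea is to use the open stratum. A closed embedding $\mathrm{Spec}(A)\hookrightarrow\mathbb{P}^1_{\mathbb{K}}$ gives a Galois-invariant configuration of five distinct points, hence a $\mathbb{K}$-point of the twisted $M_{0,5}\subset X$. For infinite $\mathbb{K}$ such an embedding exists because $A$ has a primitive element: $A\cong\mathbb{K}[x]/(f)$ with $f$ separable of degree $5$. This completes the infinite case, and your first route can be dropped.

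\textbf{Finite fields.} The bound $a\geqslant -2$ needs justification. Knowing only that the eigenvalues are roots of unity, one of them equal to $1$, gives $a\geqslant -3$, and $q^2-3q+1<0$ for $q=2$. The correct argument is that $\mathrm{Pic}(X_{\overline{\mathbb{K}}})\otimes\mathbb{Q}$ is the trivial plus the standard $4$-dimensional representation of $W(A_4)\cong\mathfrak{S}_5$. Hence $a$ equals the number of letters fixed by Frobenius, and $\#X(\mathbb{F}_q)\geqslant q^2+1$.
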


Let us also recall well-known result about rationality of  del Pezzo surfaces of degree $d \geqslant 5$ with a $\mathbb{K}$-point.

\begin{Th}[{\cite[Theorem 2.1]{VA}}]\label{theorem:rationalityd>5}
Let $X$ be a del Pezzo surface of degree $d \geqslant 5$ over $\mathbb{K}.$ If $X$ has a $\mathbb{K}$-point,  then  $X$ is $\mathbb{K}$-birational to $\mathbb{P}^2.$

\end{Th}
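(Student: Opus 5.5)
The statement to be proved is Theorem~\ref{theorem:rationalityd>5}: a del Pezzo surface $X$ of degree $d\geqslant 5$ with a $\mathbb{K}$-point $p$ is $\mathbb{K}$-birational to $\mathbb{P}^2$. I would argue by descending induction on the degree, using the $\mathbb{K}$-point to project away and move to a surface of higher degree. The terminal cases are $d=9$ and $d=8$. For $d=9$, $X$ is a Severi--Brauer surface with a $\mathbb{K}$-point, so $X\cong\mathbb{P}^2$. For $d=8$, $X$ is either the blow-up of $\mathbb{P}^2$ at a $\mathbb{K}$-point, hence rational, or a form of $\mathbb{P}^1\times\mathbb{P}^1$. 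In the second case I blow up $p$ and contract the strict transforms of the two $\overline{\mathbb{K}}$-rulings through $p$. These two curves form a Galois-stable pair of disjoint $(-1)$-curves on the blow-up, so the result is a degree $7+2=9$ surface with a $\mathbb{K}$-point, i.e.\ $\mathbb{P}^2$.

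For $5\leqslant d\leqslant 7$ I split according to the position of $p$. If $p$ lies on no $(-1)$-curve, then blowing up $p$ gives a del Pezzo surface of degree $d-1\geqslant 4$. This surface is only auxiliary, so I avoid it and instead use the double projection from $p$, i.e.\ the linear system $|-K_X-2p|$. More concretely, I would look for a Galois-stable set of disjoint $(-1)$-curves on the blow-up $\widetilde{X}$ whose contraction increases the degree. Among these are the strict transforms of the conics through $p$ in the various conic classes.

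The case $d=7$ is easy in either situation. The middle $(-1)$-curve of the chain of three is Galois-invariant, so it is defined over $\mathbb{K}$ and can be contracted, giving degree $8$ with a $\mathbb{K}$-point. For $d=6$, the hexagon of $(-1)$-curves carries the Galois action through $\mathfrak{S}_3\times\mathfrak{S}_2$. I would blow up $p$, where (if $p$ is general) we get degree $5$, and then contract the three strict transforms of the conics through $p$ of the three conic classes. Together with the exceptional curve these are handled by Galois-orbit considerations. Alternatively, projecting from $p$ in the anticanonical embedding in $\mathbb{P}^6$ gives a degree $4$ surface, and the double projection gives $\mathbb{P}^2$ directly. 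If $p$ lies on a $(-1)$-curve $E$ of the hexagon, then the intersection point with a neighbour, or the Galois orbit of $E$, yields contractible orbits of disjoint curves. For $d=5$, the pencil-type construction through $p$ gives either a conic bundle with a section or a map to $\mathbb{P}^2$. Concretely, blowing up a general $p$ produces a quartic del Pezzo surface, and the five conic classes through $p$ are handled via projection from $p$ into $\mathbb{P}^4$, which should land on a smooth quadric threefold section; stereographic projection from a $\mathbb{K}$-point of the quadric then gives rationality.

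The main obstacle is the case where $p$ lies on a $(-1)$-curve, or on intersections of them, for $d=5,6$. Here I would first use that a $\mathbb{K}$-point lying on exactly one $(-1)$-curve forces that curve to be defined over $\mathbb{K}$; contracting it raises the degree and keeps a $\mathbb{K}$-point. If $p$ is an intersection point of two $(-1)$-curves, the pair is Galois-stable. For $d=6$ the two curves are then each defined over $\mathbb{K}$, since the hexagon's neighbours are distinguishable; for $d=5$ one then checks via the Petersen graph that one of the curves has an orbit of disjoint curves. In both cases contracting the orbit increases the degree while preserving a $\mathbb{K}$-point, which reduces to the earlier cases.
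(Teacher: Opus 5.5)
The paper gives no proof of this statement; it cites Várilly-Alvarado. Your strategy is the standard one: descend in degree by blowing up the $\mathbb{K}$-point and contracting Galois-stable sets of disjoint $(-1)$-curves. Your cases $d=9,8,7$ are fine. The general-point case $d=6$ is also fine, since the three conics through $p$ pairwise meet only at $p$, transversally, so their strict transforms are disjoint.

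\textbf{The main gap: $d=5$ with $p$ off the lines.} This is the heart of the theorem, and your concrete argument does not prove it. It ends on the quartic del Pezzo surface $\widetilde X\subset\mathbb{P}^4$. Restricting a stereographic projection of a quadric threefold to $\widetilde X$ only maps it, at best birationally, onto a cubic or quartic surface in $\mathbb{P}^3$, not onto $\mathbb{P}^2$. In fact no argument using only $\widetilde X$ and a $\mathbb{K}$-point can work, because quartic del Pezzo surfaces with $\mathbb{K}$-points need not be $\mathbb{K}$-rational.

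The missing idea is the one you gesture at but never carry out. The five conic classes on $X$ pairwise have intersection number $1$, so the five conics through $p$ meet pairwise only at $p$. Hence their strict transforms on $\widetilde X$ are five pairwise disjoint $(-1)$-curves forming a Galois-stable set. Contracting them gives a surface of degree $4+5=9$ with a $\mathbb{K}$-point, i.e.\ $\mathbb{P}^2$; this is the link of Lemma~\ref{lemma:linkd9d5}. Equivalently, $|-K_X-2p|$ is a net with $(-K_{\widetilde X}-E)^2=1$, so the double projection you mention gives $\mathbb{P}^2$ directly exactly when $d=5$.

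For $d=6$ your ``alternative'' is misstated. There the double projection maps onto a quadric surface in $\mathbb{P}^3$, not onto $\mathbb{P}^2$, and the projection from $p$ gives a quintic, not a quartic.

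\textbf{A false step: $d=6$ with $p$ a vertex of the hexagon.} The two $(-1)$-curves through $p$ need not be defined over $\mathbb{K}$. The reflection of the hexagon fixing the vertex $p$ swaps them, and also swaps the two triples. This happens already over $\mathbb{R}$: blow up the sphere $x^2+y^2+z^2=w^2$ at a pair of conjugate points $a,\bar a$. The ruling through $a$ of one family and its conjugate through $\bar a$ meet in a real point, and conjugation swaps them.

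The repair is as follows. The Galois image lies in the vertex stabiliser $\{1,s\}$. The remaining neighbour of each curve through $p$ gives a pair of opposite edges of the hexagon. These are disjoint, miss $p$, and are $s$-stable, so contracting them reaches degree $8$ (in the example you recover the sphere).

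\textbf{The same issue for $d=5$ with $p$ on two lines.} Label the lines by $2$-subsets of $\{1,\dots,5\}$, two lines meeting iff the subsets are disjoint, and let $p=E_{12}\cap E_{34}$. The Galois orbit of either line through $p$ is an intersecting pair, so contracting it is not possible. What works is the set $E_{15},E_{25},E_{35},E_{45}$. It is Galois-stable, because the stabiliser of the edge fixes the index $5$. These four lines are pairwise disjoint and miss $p$, and contracting them gives degree $9$.
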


\begin{definition}
 Let $X$ be a  del Pezzo surface and let $P \subset X$ be a collection of distinct points. We say that $P$ is \textit{in Sarkisov general position} if the blow up of~$X$ at $P$ is again a  del Pezzo surface.
\end{definition}

Now we give an important definition in the birational geometry of surfaces.

\begin{definition}
A smooth projective surface $X$ is called \textit{minimal} if any birational morphism $f \colon X \to X'$ to a smooth projective surface $X'$ is an isomorphism.
\end{definition}

\begin{Remark}
It is a classical fact (cf.~\cite[Theorem 1]{IskMMM}) that if $X$ is a minimal geometrically rational surface, then we have the following two mutually exclusive cases:

\begin{enumerate}
\renewcommand\labelenumi{\rm (\arabic{enumi})}
\renewcommand\theenumi{\rm (\arabic{enumi})}

\item
the Picard group of $X$ is $\mathrm{Pic}(X) =\mathbb{Z}$ and $X$ is a del Pezzo surface;

\item
the Picard group of $X$ is $\mathrm{Pic}(X) =\mathbb{Z} \oplus \mathbb{Z}$ and there is a morphism 
$$
\pi \colon X \to C,
$$
 where the base curve $C$ and the general fiber $X_{\eta}$ of $\pi$ are smooth curves of genus zero.

\end{enumerate}

\end{Remark}

\begin{Remark}
For the sake of convenience we call a del Pezzo surface $X$  a \textit{minimal del Pezzo surface} if it is of Picard rank $1.$

\end{Remark}

Let us recall the notion of Sarkisov links. They are special transformations between Mori fibre spaces $X\to B$ and $X'\to  B'.$ There are $4$ types of Sarkisov links

\begin{figure}[H]
\[
{
\def\arraystretch{2.2}
\begin{array}{cc}
\begin{tikzcd}[ampersand replacement=\&,column sep=1cm,row sep=0.14cm]
\ar[dd,"\rm div",swap] T \ar[rr,"\simeq"] \&\& X' \ar[dd,"\rm fib"] \\ \\
X \ar[uurr,"\chi" ,dashed,swap] \ar[dr,"\rm fib",swap] \&  \& B' \ar[dl] \\
\& B = \mathrm{Spec}(\mathbb{K}) \&
\end{tikzcd}
&
\begin{tikzcd}[ampersand replacement=\&,column sep=1.3cm,row sep=0.14cm]
  \&T\ar[ddl,"\rm div",swap]\ar[ddr,"\rm div"]\&  \\ \\
X \ar[rr,"\chi" ',dashed,swap] \ar[dr,"\rm fib",swap] \&  \& X' \ar[dl,"\rm fib"] \\
\& B = B'  \&
\end{tikzcd}
\\
\text{Type \rom{1}} & \text{Type \rom{2}} \vspace{0.2cm}
\\
\begin{tikzcd}[ampersand replacement=\&,column sep=1cm,row sep=0.14cm]
X \ar[ddrr,"\chi",dashed,swap] \ar[dd,"\rm fib",swap]  \ar[rr,"\simeq"] \&\& T \ar[dd,"\rm div"] \\ \\
B \ar[dr] \& \& X' \ar[dl,"\rm fib"] \\
\& B'= \mathrm{Spec}(\mathbb{K}) \&
\end{tikzcd}
&
\begin{tikzcd}[ampersand replacement=\&,column sep=0.8cm,row sep=0.14cm]
X=T \ar[rr,"\chi=\mathrm{id}_T",swap] \ar[dd,"\rm fib",swap]  \&\& T=X' \ar[dd,"\rm fib"] \\ \\
B \ar[dr] \& \& B'\ar[dl] \\
\& \mathrm{Spec}(\mathbb{K}) \&
\end{tikzcd}
\\
\text{Type \rom{3}} & \text{Type \rom{4}} 
\end{array}
}
\]
\caption{The four types of Sarkisov links for dimension $2.$}
\label{figure:SarkisovTypesSurfaces}
\end{figure}
\noindent where ``div'' and ``fib'' mean divisorial contraction and fibration, respectively.  Figure~\ref{figure:SarkisovTypesSurfaces} is taken from~\cite[Figure 1]{YasBlanc}.

\begin{Remark}
Sarkisov links of type~\rom{2} such that  $B=B'=\mathrm{Spec}(\mathbb{K})$ will often be encountered below.  So in this case we  draw the diagram of Sarkisov link of type~\rom{2} in the following way
\begin{equation}\label{eq:Sarkisovlink2simple}
 \begin{tikzcd}
&T \arrow[dl, "g" ']  \arrow[dr, "h"] & \\
X  \arrow[rr, "\chi", dashrightarrow] & &X', 
\end{tikzcd}
\end{equation}
\noindent where $X$ and $X'$ are two minimal del Pezzo surfaces, $g$ and $h$ are blowups of points of degrees $d$ and $d',$ respectively, in Sarkisov general position  and $T$ is a del Pezzo surface of Picard rank~$2.$ Sometimes, if $g,$ $h,$ $d,$ $d'$ and $T$ are clear from the context, we just write the Sarkisov link~\eqref{eq:Sarkisovlink2simple} as
$$
\chi \colon X \DashedArrow[->,densely dashed    ] X'.
$$

\end{Remark}

Now recall the result about Sarkisov program in class of smooth minimal rational surfaces, which is going back to V.~A.~Iskovskikh (see~\cite[Theorem~2.6]{Isk}).

\begin{Th}[{\cite[Theorem 2.4]{Lamy}}]\label{th:SarkisovlinksdP}
Let $\mathcal{D}_5,$ $\mathcal{D}_6$ and $\mathcal{D}_8$ be classes of minimal rational del Pezzo surfaces of degree $5,$ $6$ and $8,$ respectively, $\mathcal{C}_5,$ $\mathcal{C}_6$ and $\mathcal{C}_8$ be classes of minimal conic bundles such that the square of canonical class is equal to~$5,$~$6$ and~$8,$ respectively. Then we have the following diagram of Sarkisov links among minimal rational del Pezzo surfaces

\tikzset{
  LabelStyle/.style={rectangle,sloped,rounded corners,minimum width=1em,fill=gray!40},
  VertexStyle/.append style={rectangle,inner sep=3pt,fill=red!30},
  EdgeStyle/.append style={dashed}
}

\begin{figure}[H]
\begin{center}
\begin{tikzpicture}[scale=1]
\clip(-5.4,-4.8) rectangle(7.6,5.3);
\SetGraphUnit{7.7em}
\Vertex[L=$\{\P^2\}$]{P}
\SO[L={$\Dl_5$}](P){D5}
\EA[L={$\Dl_8$}](P){D8}
\SO[L={$\Dl_6$}](D8){D6}
\tikzset{
  VertexStyle/.append style={rectangle,inner sep=3pt,fill=blue!30}
}
\NO[L={$\Cl_5$}](P){C5}
\NO[L={$\Cl_6$}](D8){C6}
\NOWE[L={$\{\F_0, \F_1, \dots\} = \Cl_8$}](P){F}

\Edge[label={I or III:1}](P)(F)
\Edge[label={I or III:4}](P)(C5)
\Edge[label={II:1:5}](D5)(P)
\Edge[label={II:2:1}](P)(D8)

\Loop[dist=8em,dir=WE,label={\rotatebox{180}{II:3:3}},style={-}](P.west)
\Loop[dist=11em,dir=WE,label={\rotatebox{180}{II:6:6}},style={-}](P.west)
\Loop[dist=14em,dir=WE,label={\rotatebox{180}{II:7:7}},style={-}](P.west)
\Loop[dist=17em,dir=WE,label={\rotatebox{180}{II:8:8}},style={-}](P.west)

\Edge[label={II:2:5}](D8)(D5)
\Edge[label={I or III:2}](D8)(C6)
\Edge[label={II:1:3}](D6)(D8)

\Loop[dist=8em,dir=EA,label={II:4:4},style={-}](D8.east)
\Loop[dist=11em,dir=EA,label={II:6:6},style={-}](D8.east)
\Loop[dist=14em,dir=EA,label={II:7:7},style={-}](D8.east)

\Loop[dist=8em,dir=WE,label={\rotatebox{180}{II:3:3}},style={-}](D5.west)
\Loop[dist=11em,dir=WE,label={\rotatebox{180}{II:4:4}},style={-}](D5.west)

\Loop[dist=8em,dir=EA,label={II:2:2},style={-}](D6.east)
\Loop[dist=11em,dir=EA,label={II:3:3},style={-}](D6.east)
\Loop[dist=14em,dir=EA,label={II:4:4},style={-}](D6.east)
\Loop[dist=17em,dir=EA,label={II:5:5},style={-}](D6.east)

\Loop[dist=7em,dir=NO,label={II:d:d},style={-}](C6.north)
\Loop[dist=7em,dir=NO,label={II:d:d},style={-}](C5.north)
\Loop[dist=7em,dir=NO,label={II:d:d},style={-}](F.north)
\Loop[dist=4em,dir=WE,label={\rotatebox{180}{IV}},style={-}](F.west)
\end{tikzpicture}
\end{center}
\caption{Sarkisov links between minimal rational surfaces over a perfect field}
\label{figure:Sarkisovlinks}
\end{figure}
\noindent where an edge between two vertexes  means that there is a Sarkisov link between some minimal rational surfaces which classes are written in the vertexes. Roman number  in the grey box on the edge means the type of the  Sarkisov link and Arabic numbers mean the degree of points which are blown up.

\end{Th}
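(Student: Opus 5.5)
Since this is Iskovskikh's classification (as stated in \cite{Lamy}), I would reprove it by the standard two-ray game. Start from a link of type~\rom{1} or~\rom{2} out of a minimal rational del Pezzo surface $X$ of degree $d$. Such a link begins with the blowup $g\colon T\to X$ of a point of degree $r$ in Sarkisov general position. Then $T$ is a del Pezzo surface of Picard rank~$2$ and degree $K_T^2=d-r\geqslant 1$. The cone $\overline{NE}(T)$ has exactly two extremal rays. One is contracted by $g$. The other is contracted either to a curve, giving a conic bundle (type~\rom{1}), or to a surface $X'$ by blowing down a Galois orbit of $r'$ pairwise disjoint $(-1)$-curves (type~\rom{2}). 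In the second case $K_{X'}^2=d'=d-r+r'$. Links of type~\rom{3} are inverses of those of type~\rom{1}, and type~\rom{4} occurs only between conic bundles. So it suffices to classify, for each admissible pair $(d,r)$, what the second ray is.

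The first step is to restrict the possible $d$. By Theorem~\ref{theorem:rationalityd>5} together with Theorem~\ref{theorem:Lang-Nishimura}, the relevant surfaces of degree $\geqslant 5$ are rational exactly when they have a $\mathbb{K}$-point. Degree $5$ always has one by Theorem~\ref{th:dp5point}. Degree $7$ and the surface $\mathbb{F}_1$ are never minimal, since they contain a $\mathrm{Gal}$-invariant $(-1)$-curve. Degree $9$ with a point is $\mathbb{P}^2$. For $d\leqslant 4$ I would invoke Iskovskikh's theorem that a minimal del Pezzo surface of degree $\leqslant 4$ is not rational. Hence $d\in\{9,8,6,5\}$ and $r\leqslant d-1$. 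In addition, $r$ must be compatible with general position: no orbit can have $r=2$ or $r=3$ on a surface whose lattice forces collinearity-type degenerations. This is checked through the Galois action on the lines of $T_{\overline{\mathbb{K}}}$.

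The second step, and the main computation, is to determine the second extremal ray for each $(d,r)$. I would do this in $\mathrm{Pic}(T_{\overline{\mathbb{K}}})$, for instance in the basis $L,E_1,\dots,E_{9-d+r}$. The $\mathrm{Gal}$-invariant part has rank $2$, spanned by $-K_T$ and the pullback $E$ of the exceptional orbit. The other ray is generated by an invariant class $D=-aK_T-bE$ with $D\cdot(-K_T)>0$ and $D^2\in\{0\}\cup\{-r'\}$. In the case $D^2=-r'$, $D$ must also be the sum of a Galois orbit of disjoint $(-1)$-curves. For example, on $\mathbb{P}^2$:
\begin{itemize}
\item $r=1$ gives $\mathbb{F}_1\to\mathbb{P}^1$;
\item $r=2$ gives contraction of the line through the two points onto a quadric;
\item $r=4$ gives the pencil of conics, hence $\mathcal{C}_5$;
\item $r=5$ gives contraction of the conic onto degree $5$;
\item $r=3,6,7,8$ give the standard quadratic, de Jonqui\`eres-type and Geiser/Bertini involutions of type II:$r$:$r$.
\end{itemize}
Repeating this for $d=8,6,5$ yields the remaining edges: II:2:5, II:1:3, the type I/III link to $\mathcal{C}_6$, and the self-links. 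The point to check is that $d'$ always lands in $\{9,8,6,5\}$. A value $d'=7$ or $d'\leqslant 4$ would contradict minimality or rationality of $X'$, so those cases never arise; this can also be seen directly on the lattice.

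I expect the main obstacle to be the exhaustive case analysis in the second step. One must show that for each $(d,r)$ there is exactly one numerical solution for the second ray. One must also exclude spurious solutions, namely orbits of $(-1)$-curves that are not pairwise disjoint, and configurations where the blown-up points are not in Sarkisov general position. I would handle this by using $-K_T$-degree bounds on $(-1)$-curves and conics of $T_{\overline{\mathbb{K}}}$, which force $a$ and $b$ to be small, and then enumerating the finitely many possibilities. Finally, the loops on $\mathcal{C}_5$, $\mathcal{C}_6$, $\mathcal{C}_8$ (elementary transformations II:d:d) and the type~\rom{4} loop on Hirzebruch surfaces follow from the same two-ray game started from a conic bundle.
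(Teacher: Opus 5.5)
\textbf{Comparison with the paper.} The paper gives no proof of this statement. It quotes it from the literature (Theorem~2.4 of the cited work, going back to Iskovskikh's Theorem~2.6), so your plan is a genuinely different, self-contained route, and it is essentially Iskovskikh's original argument.

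\emph{Why the plan works.} Both rays of $\overline{NE}(T)$ are $K$-negative, so $-K_T$ is ample. The second ray can then be read off in $\mathrm{Pic}(T_{\overline{\mathbb{K}}})^{\mathrm{Gal}}\otimes\mathbb{Q}=\langle H,E\rangle$, where $H$ is the pullback of the generator $L$, $f_1+f_2$, $-K_X$, $-K_X$ for $d=9,8,6,5$. For $D=pH-qE$ the divisorial condition $D^2=-D\cdot(-K_T)$ becomes $p(p+3)=rq(q+1)$, $2p(p+2)=rq(q+1)$, and $d\,p(p+1)=r\,q(q+1)$ respectively. The conic condition is $D^2=0$, $D\cdot(-K_T)=2$. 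A case check does return exactly the edges of the diagram.

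\emph{Cost and benefit.} Compared with the citation, this costs a finite but tedious enumeration. In return it produces the contracted curves explicitly: the two rulings through a point for $(d,r)=(8,1)$, the $(2,1)$- and $(1,2)$-curves through five points for $(8,5)$, and the conic through five points for $(9,5)$. The paper re-derives exactly these by hand in Lemmas~\ref{lemma:linkd9d5}, \ref{lemma:linkd8d5} and~\ref{lemma:linkd9d8} in order to track splitting fields.

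\textbf{Points that need repair.}

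(1) Uniqueness fails for the bare numerical conditions. For $(d,r)=(9,6)$ the equation $p(p+3)=6q(q+1)$ is of Pell type, with solutions $12L-5E$, $132L-54E,\dots$. You must use $r'=d'-K_T^2\leqslant 9-K_T^2$ (which follows from $d'\in\{5,6,8,9\}$) as an input to the enumeration, not only as a consistency check at the end. With this bound the answer is unique in every case.

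(2) The remark about excluding $r=2$ or $r=3$ is wrong here. On each of $\mathbb{P}^2$, $\mathcal{D}_8$, $\mathcal{D}_6$, $\mathcal{D}_5$, every $1\leqslant r\leqslant d-1$ occurs in the diagram. General position constrains individual points, not the value of $r$. What is genuinely missing is the existence half of the statement: that each edge is realized, over some field, by a point of that degree in Sarkisov general position.

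(3) The conic-bundle vertices need an actual argument, not one sentence.
\begin{itemize}
\item Links of type II over the base and of type IV preserve $K^2$.
\item Type III links are inverses of your type I cases $(9,1)$, $(9,4)$, $(8,2)$.
\item Together these confine rational conic bundles to $\mathcal{C}_5,\mathcal{C}_6,\mathcal{C}_8$.
\item Type IV is excluded on $\mathcal{C}_5$ and $\mathcal{C}_6$ because the second isotropic class, $\tfrac{4}{5}(-K)-F$ resp.\ $\tfrac{2}{3}(-K)-F$, is not integral.
\end{itemize}

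(Minor: the II:6:6 self-map of $\mathbb{P}^2$ is the quintic transformation with six double base points, not a de Jonqui\`eres map.)
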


As we see from Theorem~\ref{th:SarkisovlinksdP} there are no Sarkisov links between minimal del Pezzo surface of degree $6$ and $\mathbb{P}^2$ and any birational map between them is a composition of at least two Sarkisov links.  Now we construct a birational map between a minimal del Pezzo surface of degree $6$  and $\mathbb{P}^2,$ which is a composition of two Sarkisov links.

\begin{Example}\label{example:dP69}
Let $\mathbb{K}$ be a perfect field, which has extensions of degree $2$ and~$3.$ For example, a field of rational numbers $\mathbb{Q}.$ We construct the birational map between $\mathbb{P}^2$ and a minimal del Pezzo surface of degree  $6$   over the field $\mathbb{K},$ which is a composition of two Sarkisov links of type~\rom{2}.

Consider projective line $\mathbb{P}^1.$ Let $Q$ be a point of degree~$2$ and $P$ be a point of degree~$3$ on $\mathbb{P}^1.$  Let 
$$
(Q)_{\overline{\mathbb{K}}}=\{Q_1, Q_2\} \quad \text{and} \quad (P)_{\overline{\mathbb{K}}}=\{P_1, P_2, P_3\}.
$$
\noindent Consider the morphism $\phi=\phi_{|-K_{\mathbb{P}^1}|} \colon \mathbb{P}^1 \to \mathbb{P}^2.$ The image $\phi(\mathbb{P}^1)$ is a conic $\mathcal{Q} \subset \mathbb{P}^2.$ Let $l$ be a line in $\mathbb{P}^2$ passing through $\phi(Q).$

\begin{figure}[H]
\begin{center}
\begin{tikzpicture}[scale=0.76]

\draw (-6,0) -- (0,0);

\fill (-5,0) circle (2pt);
\fill (-4,0) circle (2pt);
\fill (-3,0) circle (2pt);
\fill (-2,0) circle (2pt);
\fill (-1,0) circle (2pt);

\node[above left] at (-5.3,0) {$\mathbb{P}^1$};

\node[below] at (-5,0) {$Q_1$};
\node[below] at (-4,0) {$Q_2$};
\node[below] at (-3,0) {$P_1$};
\node[below] at (-2,0) {$P_2$};
\node[below] at (-1,0) {$P_3$};

\draw[->,thick] (1.0,0) -- (3.2,0);
\node[above] at (2.1,0.15) {$\phi_{|-K_{\mathbb{P}^1}|}$};

\draw (4.2,-2.2) rectangle (8.8,2.2);

\node at (4.7,1.8) {$\mathbb{P}^2$};

\draw (6.6,0) circle (1.25);

\draw (5.517468245,-1.3) -- (5.517468245,1.3);

\node[left] at (5.52,-1.3) {$l$};

\fill (5.517468245,0.625) circle (2pt);
\fill (5.517468245,-0.625) circle (2pt);

\fill (6.6,1.25) circle (2pt);
\fill (7.85,0) circle (2pt);
\fill (6.6,-1.25) circle (2pt);

\node[left] at (5.45,0.65) {$Q_2$};
\node[left] at (5.45,-0.65) {$Q_1$};

\node[above] at (6.6,1.28) {$P_1$};
\node[right] at (7.9,0) {$P_2$};
\node[below] at (6.6,-1.3) {$P_3$};

\node[below] at (7.7,-0.8) {$\mathcal{Q}$};
\end{tikzpicture}
\end{center}
\vspace{0.5cm}
\caption{Anticanonical morphism  $\phi_{|-K_{\mathbb{P}^1}|}$}
\label{figure:dP6link}
\end{figure}

Since $5$ points $\phi(Q_1),$ $\phi(Q_2),$ $P_1,$ $P_2,$ $P_3$ are in Sarkisov general position on $\mathbb{P}^2_{\overline{\mathbb{K}}}$ we can blow them up over $\mathbb{K}$ and get a del Pezzo surface $X_4$ of degree~$4$ of Picard rank~$3.$ Let  $h_1 \colon X_4 \to \mathbb{P}^2$ be this blowup. Contracting $(h_1)_*^{-1}(l)$ and $(h_1)_*^{-1}(\mathcal{Q})$ we get a minimal del Pezzo surface $X_6$ of degree~$6.$ Let $h_2 \colon X_4 \to X_6$ be this contraction. 

 Let $g_1$ be a blowup of $\phi(Q)$ and $g_2$ be a contraction of $(-1)$-curve~$(g_1)^{-1}_*(l).$  Let $g_3$ be a blowup of   the point~$g_2\left(g_1^{-1}\left(\phi\left(P\right)\right)\right)$ and $g_4$ be a contraction of the $(-1)$-curve $(g_3)^{-1}_*\left((g_2)_*\left((g_1)^{-1}_*\left(\mathcal{Q}\right)\right)\right).$ We have the following diagram 
$$
 \begin{tikzcd}
&& X_4 \arrow[dl, "f_1" ']  \arrow[dr, "f_2"] \arrow[drdr, bend left, "h_2"] \arrow[dldl, bend right, "h_1"'] && \\
& X_7 \arrow[dl, "g_1" ']  \arrow[dr, "g_2"] \arrow[rr,  dashrightarrow] & & X_5 \arrow[dl, "g_3" '] \arrow[dr, "g_4"] \\
\mathbb{P}^2 \arrow[rr,  dashrightarrow, "\chi_1"] & & X_8 \arrow[rr,  dashrightarrow, "\chi_2"]  && X_6,
\end{tikzcd}
$$
where the morphism $f_1$ is a blowup of $g_1^{-1}(\phi(Q))$ and $h_1=g_1 \circ f_1;$ the morphism~$f_2$ is a contraction of $(h_1)_*^{-1}(l)$ and $h_2=g_4 \circ f_2.$ Since $X_4$ is a del Pezzo surface of degree~$4$ of Picard rank $3,$ we get that $X_7$ and $X_5$ are del Pezzo surfaces of degree $7$ and $5,$ respectively, of Picard rank~$2.$ This means that $\chi_1$ and $\chi_2$ are two Sarkisov links of type~\rom{2} and the composition of $\chi_1$ and $\chi_2$ gives us a birational map between~$\mathbb{P}^2$ and the minimal del Pezzo surface $X_6$ of degree $6.$

\end{Example}

\begin{Cor}
Let $\mathbb{K}$ be a perfect field. Then there is a minimal rational del Pezzo surface of degree~$6$ over~$\mathbb{K}$  if and only if $\mathbb{K}$ has extensions of degree~$2$ and~$3.$ 

\end{Cor}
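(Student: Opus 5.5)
The plan is to prove the two implications separately. Sufficiency follows from Example~\ref{example:dP69}. Necessity comes from the Galois action on the hexagon of $(-1)$-curves of a del Pezzo surface of degree~$6$.

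\emph{Sufficiency.} Assume $\mathbb{K}$ has extensions of degree $2$ and $3$. Then $\mathbb{P}^1$ has a point $Q$ of degree~$2$ and a point $P$ of degree~$3$, and Example~\ref{example:dP69} applies verbatim. It produces a del Pezzo surface $X_6$ of degree~$6$, obtained from $X_4$ by contracting the two Galois orbits $(h_1)^{-1}_*(l)$ and $(h_1)^{-1}_*(\mathcal{Q})$. Since $X_4$ has Picard rank~$3$, the surface $X_6$ has Picard rank~$1$, so it is minimal. It is rational because it is birational to $\mathbb{P}^2$ via $\chi_2\circ\chi_1$. The one point to check is that the five points $\phi(Q_1),\phi(Q_2),P_1,P_2,P_3$ are in Sarkisov general position. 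No three of them are collinear: the last three lie on the smooth conic $\mathcal{Q}$, and the line $l$ meets $\mathcal{Q}$ only at $\phi(Q_1),\phi(Q_2)$. Five points impose no further condition. Hence $X_4$ is a del Pezzo surface of degree~$4$.

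\emph{Necessity.} Let $X$ be a minimal rational del Pezzo surface of degree~$6$. Over $\overline{\mathbb{K}}$ its six $(-1)$-curves form a hexagon, and $\mathrm{Gal}(\overline{\mathbb{K}}/\mathbb{K})$ acts on it through a subgroup $G$ of the dihedral group $\mathfrak{D}_6\cong\mathfrak{S}_3\times\mathfrak{S}_2$. Since $\operatorname{rk}\mathrm{Pic}(X)=1$, the invariant part of $\mathrm{Pic}(X_{\overline{\mathbb{K}}})\otimes\mathbb{Q}$ is spanned by $K_X$. The $\mathfrak{S}_2$ factor swaps the two triples of disjoint $(-1)$-curves, which correspond to the two birational contractions to $\mathbb{P}^2$. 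The $\mathfrak{S}_3$ factor permutes the three conic bundle structures. If the image of $G$ in $\mathfrak{S}_2$ were trivial, the sum of one triple would be an invariant class not proportional to $K_X$. So this image is all of $\mathfrak{S}_2$. If $3\nmid |G|$, then $G$ fixes one of the three conic classes, which is again invariant and not proportional to $K_X$. So $3$ divides the order of the image of $G$ in $\mathfrak{S}_3$.

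Now pass to field extensions, using that $G$ is a quotient of the absolute Galois group. The surjection onto $\mathfrak{S}_2$ gives a quadratic extension of $\mathbb{K}$. For degree~$3$, take the kernel $H$ of the projection to $\mathfrak{S}_2$. It has index $2$ in $G$ and still has order divisible by $3$, so its image in $\mathfrak{S}_3$ contains $\mathfrak{A}_3$. If that image is $\mathfrak{A}_3$, the fixed field of its kernel is cyclic cubic over a quadratic field $\mathbf{L}$, and one descends by choosing a subgroup of index~$3$ in $G$ itself. The image of $G$ in $\mathfrak{S}_3$ is $\mathfrak{A}_3$ or $\mathfrak{S}_3$. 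In either case it has a subgroup of index~$3$ (the trivial group, or a transposition subgroup). Its preimage in the Galois group is open of index~$3$, and its fixed field is an extension of degree~$3$ of $\mathbb{K}$.

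The step I expect to be the main obstacle is justifying the Picard-rank argument: the invariant sublattice under $\mathfrak{D}_6$ acting on $\mathrm{Pic}(X_{\overline{\mathbb{K}}})\cong\mathbb{Z}^4$ must be checked to be rank~$1$ only when both conditions hold. Each failing condition exhibits an explicit extra invariant class (a triple sum or a conic class), so the computation is short. Rationality is not actually needed for necessity.
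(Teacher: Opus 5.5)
Your proof is correct, and it follows the same idea as the paper: the Galois image in the automorphism group $\mathfrak{D}_6$ of the hexagon of $(-1)$-curves. The difference is that you make the necessity step self-contained.

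Sufficiency is exactly the paper's argument via Example~\ref{example:dP69}. For the general-position check, simply say that all five points lie on the smooth conic $\mathcal{Q}$, so no three are collinear. Your wording only covers triples inside $\{P_1,P_2,P_3\}$ or triples containing both $\phi(Q_1),\phi(Q_2)$; mixed triples are handled by the same conic argument.

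For necessity, the paper does not argue directly. It quotes a classification saying that $\mathrm{Gal}(\mathbf{F}/\mathbb{K})$ is $\mathbb{Z}/6\mathbb{Z}$, $\mathfrak{S}_3$ or $\mathfrak{D}_6$, and concludes. You instead use the decomposition $\mathfrak{D}_6\cong\mathfrak{S}_3\times\mathfrak{S}_2$ (opposite pairs, equivalently conic classes, versus the two triples) together with the fact that invariant classes are multiples of $K_X$. From this you derive exactly the two constraints that force that list: surjectivity onto $\mathfrak{S}_2$, and $3$ dividing the image in $\mathfrak{S}_3$. You then make explicit the passage to actual field extensions through open subgroups of index $2$ and $3$, including the non-normal index-$3$ subgroup in the $\mathfrak{S}_3$ case; the paper leaves this step implicit.

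What your version buys is independence from the external classification, and it shows that rationality plays no role in necessity. The paper's version is shorter only because it cites.

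Two small cleanups:
\begin{itemize}
\item The identification of the Galois-invariant part of $\mathrm{Pic}(X_{\overline{\mathbb{K}}})\otimes\mathbb{Q}$ with $\mathbb{Q}K_X$ uses that $\mathrm{Pic}(X)\to\mathrm{Pic}(X_{\overline{\mathbb{K}}})^{\mathrm{Gal}}$ is injective with torsion cokernel. This is standard but deserves a phrase.
\item The aside about the kernel $H$ and a ``cyclic cubic extension over a quadratic field'' is unnecessary and can be dropped. Taking an index-$3$ subgroup of the image in $\mathfrak{S}_3$ and pulling it back already gives the cubic extension.
\end{itemize}
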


\begin{proof}
If $\mathbb{K}$ has  extensions of degree~$2$ and~$3,$ then Example~\ref{example:dP69} gives us minimal rational del Pezzo surface of degree~$6$ over~$\mathbb{K}.$ Assume that there is a minimal rational del Pezzo surface $X_6$ of degree~$6.$ Let $\mathbf{F}$ be a minimal extension of $\mathbb{K}$ such that all $(-1)$-curves on $(X_6)_{\mathbf{F}}$ are defined. Then by~\cite[\S 4]{JZim} we get that $\mathrm{Gal}(\mathbf{F}/\mathbb{K})$ is isomorphic either to $\mathbb{Z}/6\mathbb{Z},$ or to $\mathfrak{S}_3,$ or to  $\mathfrak{D}_6.$ This means that~$\mathbb{K}$  has extensions of degree~$2$ and~$3.$ 

\end{proof}

\begin{Cor}\label{corollary:dp>2}
There is a perfect field~$\mathbb{K}$ and two birationally equivalent minimal del Pezzo surfaces $X$ and~$X'$  over~$\mathbb{K}$ such that any birational map $f \colon X \DashedArrow[->,densely dashed    ] X'$ is a composition of at least two Sarkisov links.

\end{Cor}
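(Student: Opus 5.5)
Take $\mathbb{K}=\mathbb{Q}$, $X=\mathbb{P}^2$, and $X'=X_6$, the minimal del Pezzo surface of degree~$6$ from Example~\ref{example:dP69}. The field $\mathbb{Q}$ is perfect and has extensions of degrees~$2$ and~$3$, so the example applies. The map $\chi_2\circ\chi_1$ constructed there is birational, so $X$ and $X'$ are birationally equivalent. Both are minimal del Pezzo surfaces: $\mathbb{P}^2$ has Picard rank one, and $X_6$ is minimal by construction.

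It remains to show that no birational map $f\colon \mathbb{P}^2 \DashedArrow[->,densely dashed] X_6$ is a single Sarkisov link. Suppose $f$ were a Sarkisov link. Its source and target are both del Pezzo surfaces of Picard rank one, so both base curves are $\mathrm{Spec}(\mathbb{K})$. Type~\rom{1} and type~\rom{3} links have a conic bundle on one side, and type~\rom{4} links go between conic bundles, so $f$ must be of type~\rom{2} with $B=B'=\mathrm{Spec}(\mathbb{K})$. Theorem~\ref{th:SarkisovlinksdP} lists all such links, and its diagram has no edge between $\{\mathbb{P}^2\}$ and $\mathcal{D}_6$. The only edges touching $\mathcal{D}_6$ are its self-loops and the edge to $\mathcal{D}_8$. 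Hence $f$ cannot be a single link.

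Any birational map between them does factor into Sarkisov links by the Sarkisov program in dimension~$2$ (\cite{Isk}), so every such $f$ is a composition of at least two links. Example~\ref{example:dP69} shows two links suffice, so the bound is attained.

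The main point to get right is the type analysis: one must check that a link whose two ends are both rank-one del Pezzo surfaces has to be of type~\rom{2}. Once that is settled, the argument reduces to reading off the classification diagram of Theorem~\ref{th:SarkisovlinksdP}.
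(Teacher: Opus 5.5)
Your proof is correct and is essentially the paper's argument: Example~\ref{example:dP69} supplies the birational pair $\mathbb{P}^2$, $X_6$, and Theorem~\ref{th:SarkisovlinksdP} has no edge between $\{\mathbb{P}^2\}$ and $\mathcal{D}_6$, so no single link connects them. Your type analysis only spells out why type~II links are the only ones to check, and the zero-link case, left implicit by you and by the paper, is ruled out because $K^2$ differs, so $\mathbb{P}^2\not\simeq X_6$.
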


\begin{proof}

From Theorem~\ref{th:SarkisovlinksdP} we get that there are no Sarkisov links between minimal del Pezzo surface of degree $6$  and $\mathbb{P}^2.$ From Example~\ref{example:dP69} we obtain the existence of birationally equivalent minimal del Pezzo surface of degree $6$  and  $\mathbb{P}^2$ over some field~$\mathbb{K}.$

\end{proof}

 Let us recall well-known fact about non-trivial Severi--Brauer surfaces.

\begin{Lemma}[{\cite[Corollary~2.10]{ShBir}}]\label{lemma:SB}
Let $X$ be a  del Pezzo surface of degree $9$ without points.  Let~$X'$ be a minimal del Pezzo surface  which is birational to $X.$ Then there is a birational map~\mbox{$f : X \DashedArrow[->,densely dashed    ] X'$} such that $f$ is either an isomorphism, or a Sarkisov link of type~\rom{2}.

\end{Lemma}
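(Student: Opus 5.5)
The statement to prove is Lemma~\ref{lemma:SB}: $X$ is a Severi--Brauer surface without $\mathbb{K}$-points and $X'$ is a minimal del Pezzo surface birational to it. The plan is to decompose an arbitrary birational map $X \DashedArrow[->,densely dashed] X'$ into Sarkisov links via the Sarkisov program (Iskovskikh), and then to show that every link starting from $X$ is of a very restricted form.

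\textbf{Step 1: no $\mathbb{K}$-points anywhere.} By Theorem~\ref{theorem:Lang-Nishimura}, $X'$ and every intermediate Mori fibre space in the decomposition have no $\mathbb{K}$-points. This immediately rules out the following.
\begin{itemize}
\item Degree~$5$ surfaces, by Theorem~\ref{th:dp5point}.
\item Degree~$\geqslant 5$ surfaces birational to $\mathbb{P}^2$; every such surface has a point.
\item Conic bundles. A conic bundle over a conic base without points would still force index considerations that are incompatible with $X$. Concretely, a geometrically rational surface birational to a nontrivial Severi--Brauer surface cannot carry a conic bundle structure, since the Brauer class of order~$3$ would have to be killed by a degree-$2$ construction.
\end{itemize}
Hence all links in the decomposition are of type~\rom{2} between minimal del Pezzo surfaces.

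\textbf{Step 2: links from $X$ are $3:3$ or $6:6$.} A point of degree~$d$ on $X$ must have $3\mid d$, because the index of a nontrivial Severi--Brauer surface is~$3$. The type~\rom{2} links from a degree~$9$ surface in Theorem~\ref{th:SarkisovlinksdP} have degrees $1{:}5$, $2{:}1$, $3{:}3$, $6{:}6$, $7{:}7$, $8{:}8$. So only $3{:}3$ and $6{:}6$ remain. Each of these lands on a Severi--Brauer surface: geometrically the target is again $\mathbb{P}^2$ with Picard rank one, so it is a form of $\mathbb{P}^2$. Thus every surface in the chain, and in particular $X'$, is a Severi--Brauer surface in the class of $X$ or its opposite.

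\textbf{Step 3: composing links.} It remains to show that any composition of such links can be replaced by a single link or an isomorphism. I would use the classical description of the targets, going back to Amitsur and Châtelet. The link through a point of degree~$3$ sends $X$ to the opposite surface $X^{op}$ (Brauer class negated). The link through a point of degree~$6$ sends $X$ to a surface isomorphic to $X$ itself: the class is multiplied by $2\equiv -1 \cdot (-1)$ appropriately, equivalently it is the composition of two $3{:}3$ links. Granting this, every surface birational to $X$ is isomorphic to $X$ or to $X^{op}$. If $X'\cong X$, an isomorphism suffices. If $X'\cong X^{op}$, a single $3{:}3$ link through any degree~$3$ point works; such a point exists because $X$ contains a degree-$3$ point in general position (the index is~$3$ and the field is infinite, or one uses the splitting field of degree~$3$).

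\textbf{Main obstacle.} The hardest step is Step~3: identifying the target of a $3{:}3$ link as $X^{op}$ and of a $6{:}6$ link as $X$. This requires tracking the Brauer class through the linear system $|2H - P|$, where $H$ is the class of a line. I would do this via the Galois action on $\mathrm{Pic}(X_{\overline{\mathbb{K}}})$ together with the Amitsur subgroup.
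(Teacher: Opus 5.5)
The paper gives no proof of this lemma; it quotes Shramov's Corollary~2.10. Your outline is the standard argument and is sound in structure:
\begin{itemize}
\item decompose the map into Sarkisov links;
\item since every point on a pointless Severi--Brauer surface has degree divisible by $3$, only the $3{:}3$ and $6{:}6$ links can start from such a surface;
\item hence, inductively, every surface in the chain is a pointless Severi--Brauer surface;
\item then track the Brauer class.
\end{itemize}

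However, one claim in Step~3 is false: the $6{:}6$ link does not land on $X$. On the blowup $T$ of the degree-$6$ point, the new hyperplane class is $H'=5H-2E$ (quintics singular at the six points). Here $E$ is Galois-invariant and defined over $\mathbb{K}$, so it contributes nothing to the Brauer class. The class of the target is therefore $5[X]=-[X]$, so the target is $X^{\mathrm{op}}$. This is exactly as for the $3{:}3$ link, where $H'=2H-E$ gives $2[X]=-[X]$. In general, a map with $\phi^*H'=nH-\sum m_iE_i$ multiplies the class by $n$. In particular, the $6{:}6$ link is not a composition of two $3{:}3$ links, since such a composition multiplies the class by $4\equiv 1$. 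The Galois/Brauer computation you plan as your ``main obstacle'' would refute, not confirm, the statement you wrote down.

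The error does not damage the conclusion. All you need is that a link starting from a surface of class $\pm[X]$ ends on one of class $\pm[X]$. This still holds with the corrected targets: both kinds of link interchange $X$ and $X^{\mathrm{op}}$, and nontrivial Severi--Brauer surfaces are determined by their class. Your final step then goes through.

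Two points in that final step need an actual argument rather than a parenthesis:
\begin{itemize}
\item \emph{Existence of a degree-$3$ point.} Index $3$ alone only gives $\gcd = 3$. A point of degree $3$ exists because the degree-$3$ division algebra of $X$ is split by a cubic subfield.
\item \emph{General position.} Any point of degree $3$ is automatically in Sarkisov general position. If its three geometric points were collinear, the line through them would be Galois-invariant. It would then be a smooth conic over $\mathbb{K}$, which has a point of degree at most $2$, contradicting that all point degrees on $X$ are divisible by $3$.
\end{itemize}

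Finally, the conic-bundle paragraph of Step~1 is vague and unnecessary. Type~\rom{1} links from a degree-$9$ surface blow up points of degree $1$ or $4$, and these do not exist on $X$. Also, for non-rational $X$ the list of possible links should come from Iskovskikh's general classification, not from Theorem~\ref{th:SarkisovlinksdP}, which concerns rational surfaces.
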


\section{Del Pezzo surfaces of degree six}\label{section:dP6}

Let us recall the description of the geometry of  del Pezzo surfaces of degree $6.$  We follow the paper~\cite{Yas}.  Let us define invariants of a minimal del Pezzo surface of degree $6$ over~$\mathbb{K}.$    Recall that on $X_{\overline{\mathbb{K}}}$ there are six $(-1)$-curves. Denote them by 
$$
E_1, \, E_2, \, E_3, \, F_1, \, F_2, \, F_3.
$$
\noindent Their intersections are:
\begin{gather*}
E_i^2=F_i^2=-1 \quad \text{for} \quad  i \in \{1,2,3\};  \\
E_i \cdot E_j=F_i \cdot F_j=0 \quad \text{for} \quad  i \neq j;  \\
E_i \cdot F_j=1 \quad \text{for} \quad  i \neq j;  \\
E_i \cdot F_i=0 \quad \text{for} \quad   i \in \{1,2,3\}. 
\end{gather*}

\noindent  Let $\mathbf{F}$ be a minimal extension of $\mathbb{K}$ over which all $(-1)$-curves on $X$ are defined. The Galois group $\mathrm{Gal}(\mathbf{F}/\mathbb{K})$ acts on the set of triples 
$$
\{\{E_1, E_2, E_3\}, \{F_1, F_2, F_3\}\}
$$
with transitive action. Let~$\mathbb{K} \subset \mathbf{K} \subset \mathbf{F}$ be a minimal extension of~$\mathbb{K}$ such  that both triples 
$$
\{E_1, E_2, E_3\} \quad \text{and}  \quad \{F_1, F_2, F_3\}
$$
\noindent are defined over $\mathbf{K}.$ This means that there are two contractions
$$
 \begin{tikzcd}
&X_{\mathbf{K}} \arrow[dl, "\eta" ']  \arrow[dr, "\eta^{\text{opp}}"] & \\
S  \arrow[rr, "\chi", dashrightarrow]  & &S^{\text{opp}}, 
\end{tikzcd}
$$
such that the map $\eta$ contracts the triple $\{E_1, E_2, E_3\}$ and the map $\eta^{\text{opp}}$ contracts the triple~\mbox{$\{F_1, F_2, F_3\}.$} Note that $\chi$ is the Sarkisov link of type~\rom{2} between Severi--Brauer surfaces (cf.~\cite[Example~53]{Kollar}).  It is obvious that the extension~\mbox{$\mathbb{K} \subset \mathbf{K}$} is of degree~$2.$ Denote by $V$ one of the surfaces $S$ and $S^{\text{opp}}.$

The Galois group $\mathrm{Gal}(\mathbf{F}/\mathbb{K})$ acts on the set of pairs
$$
\{\{E_1, F_1\}, \{E_2, F_2\}, \{E_3, F_3\}\}
$$
 with transitive action.  Let $\mathbf{L}$ be a minimal extension $\mathbb{K} \subset \mathbf{L} \subset \mathbf{F},$ such that every pair~\mbox{$\{E_i,F_i\}$} for~\mbox{$i \in \{1,2,3\}$} is defined over $ \mathbf{L}.$ This means that there are three contractions
$$
\xymatrix{
&C_1\times C_2&\\
&X_{\mathbf{L}}\ar@{->}[u]_{\nu_1}\ar@{->}[ld]_{\nu_2}\ar@{->}[rd]^{\nu_3} & \\
C_2\times C_3&& C_3\times C_1
}
$$
such that $\nu_i \colon X_{\mathbf{L}} \to Y_i=C_j \times C_k$ for  $\{i,j,k\} = \{1,2,3\}$ is a contraction of~\mbox{$\{E_i,F_i\}$} to the product of conics (cf.~\mbox{\cite[Lemma~3.2]{Trepalin}}). The extension $\mathbb{K} \subset \mathbf{L}$ is of degree~$3$ or~$6$ (see~\cite[Remark~3.5]{Yas}). Denote by $Y$  one of $Y_i$ for $i \in \{1,2,3\}.$ Note that
$$
\mathbf{F}=\mathbf{K}\mathbf{L}.
$$

\begin{Remark}
In the paper~\cite{Yas} three extensions $\mathbb{K} \subset \mathbf{L}_i$ are defined such that~$\mathbf{L}_i$ is a minimal extension over which the pair $\{E_i,F_i\}$ is defined for~\mbox{$i \in \{1,2,3\}.$} While these extensions are not always Galois extensions, the extension $\mathbb{K} \subset \mathbf{L}$ always is. The field~$\mathbf{L}$ is a minimal Galois extension of $\mathbb{K}$ containing all~$\mathbf{L}_i$ for~\mbox{$i \in \{1,2,3\}.$} Also note that in the paper~\cite{Yas}  all results about Severi--Brauer data we need are proven using the field $\mathbf{L}.$

\end{Remark}

\begin{definition}\label{def:SBdata}
For a minimal del Pezzo surface $X$ of degree $6$ over~$\mathbb{K}$  we define a quintuple   
$$
(\mathbf{F}, \mathbf{K}, V, \mathbf{L}, Y),
$$
where the fields $\mathbf{F}$, $\mathbf{K}$, $\mathbf{L},$ the Severi--Brauer surface $V$ over $\mathbf{K}$ and a product of conics $Y$ over $\mathbf{L}$ are defined above. This quintuple is called \textit{Severi--Brauer data} of~$X.$
\end{definition}

\begin{definition}
Let $X$ and $X'$ be two minimal del Pezzo surfaces  with Severi--Brauer data $(\mathbf{F}, \mathbf{K}, V, \mathbf{L}, Y)$ and $(\mathbf{F}', \mathbf{K}', V', \mathbf{L}', Y'),$ respectively.  We say that these two data are \textit{equivalent} if 
$$
\mathbf{F}=\mathbf{F}', \; \mathbf{K}=\mathbf{K}', \;  \mathbf{L}=\mathbf{L}',
$$
the Severi--Brauer surface~$V$ is~$\mathbf{K}$-birational to $V'$ and $Y$ is $\mathbf{L}$-birational to $Y'.$
\end{definition}

\begin{Remark}\label{remark:equivalentSeveriBrauerdata}

As we see, the Severi--Brauer data of a minimal del Pezzo surface $X$ of degree~$6$ are not uniquely determined. However, they are uniquely determined up to equivalence. Moreover,  two minimal del Pezzo surface of degree $6$ are isomorphic if and only if their Severi--Brauer data are equivalent (see~\cite[Theorems~3.10,~3.14 and~3.19]{Yas}).

\end{Remark}

 More details on  Severi--Brauer data can be found in~\mbox{\cite[Sections 3.2 and 6]{Yas}.} A direct consequence of the definition of Severi--Brauer data is the following.

\begin{Lemma}\label{lemma:extensiondeg2dP6unique}
Let $X$ be a minimal del Pezzo surface of degree $6$ over a field $\mathbb{K}.$ Let 
$$
(\mathbf{F}, \mathbf{K}, V, \mathbf{L}, Y)
$$
 be  Severi--Brauer data for $X.$ Assume that $\mathbb{K} \subset \mathbf{K}'$ is a quadratic extension with the property that the Picard rank of $X_{\mathbf{K}'}$ is $2.$ Then  $\mathbf{K}'=\mathbf{K}.$ 
\end{Lemma}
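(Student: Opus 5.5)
The plan is to work entirely with the Galois action on the six $(-1)$-curves $E_1,E_2,E_3,F_1,F_2,F_3$ of $X_{\overline{\mathbb{K}}}$, through the finite quotient $G=\mathrm{Gal}(\mathbf{F}/\mathbb{K})$. This group acts faithfully on the hexagon of $(-1)$-curves, so $G$ embeds in $\mathfrak{D}_6\cong \mathfrak{S}_3\times\mathbb{Z}/2\mathbb{Z}$. Here the factor $\mathbb{Z}/2\mathbb{Z}$ swaps the two triples $\{E_i\}$ and $\{F_i\}$, and $\mathfrak{S}_3$ permutes the three pairs $\{E_i,F_i\}$.

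The first step is to describe the Picard lattice. The group $\mathrm{Pic}(X_{\overline{\mathbb{K}}})\otimes\mathbb{Q}$ has rank $4$ and is spanned by $K_X$ together with the classes of the $(-1)$-curves. Every invariant class is a rational combination of orbit sums of $(-1)$-curves. Let $H\subset G$ be the subgroup fixing $\mathbf{K}'\cap\mathbf{F}$, or more precisely the image of $\mathrm{Gal}(\overline{\mathbb{K}}/\mathbf{K}')$ in $G$. Since $X$ is minimal, $G$ has invariant rank $1$, and since $[\mathbf{K}':\mathbb{K}]=2$, the subgroup $H$ has index $1$ or $2$ in $G$. The hypothesis that $X_{\mathbf{K}'}$ has Picard rank $2$ forces $H\neq G$, so $H$ is a subgroup of index exactly $2$. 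In particular $\mathbf{K}'\subset\mathbf{F}$, because the action factors through $G$ and the image of $\mathrm{Gal}(\overline{\mathbb{K}}/\mathbf{K}')$ is a proper subgroup. Hence $\mathbf{K}'$ is the fixed field of $H$.

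The second step is to list the index-$2$ subgroups $H$ of $G$ and compute the rank of $\mathrm{Pic}^H$. The possibilities for $G$ are $\mathbb{Z}/6\mathbb{Z}$, $\mathfrak{S}_3$ and $\mathfrak{D}_6$, as recalled in the corollary above. The orbit sums give the following ranks.
\begin{itemize}
\item If $H$ preserves both triples but still permutes the three pairs transitively, the invariant classes are $K_X$, $\sum E_i$ and $\sum F_i$. Using $\sum E_i+\sum F_i=-K_X$, this gives rank exactly $2$.
\item If $H$ does not preserve the triples, it must contain an element swapping them. Then $H$ is one of the other index-$2$ subgroups, namely $\mathfrak{S}_3$ acting diagonally, or the $\mathbb{Z}/2\mathbb{Z}\times\mathbb{Z}/3\mathbb{Z}$-type subgroup in $\mathfrak{D}_6$, which I believe to be $\mathbb{Z}/6\mathbb{Z}$, or $\mathbb{Z}/3\mathbb{Z}$ in $\mathbb{Z}/6\mathbb{Z}$.
\end{itemize}

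In the second case I must check that the invariant rank is $1$ or at least $3$, but never $2$. For the subgroup $\mathbb{Z}/3\mathbb{Z}$ of $\mathbb{Z}/6\mathbb{Z}$, the element of order $3$ preserves the triples, so this subgroup falls into the first case. For the subgroups that swap the triples and act transitively on the three pairs, the $(-1)$-curves form a single orbit, so the invariant rank is $1$, which contradicts the hypothesis. Since index $2$ forces the image in $\mathfrak{S}_3$ to contain the $3$-cycle, transitivity on the pairs always holds. So every index-$2$ subgroup with invariant rank $2$ stabilizes both triples.

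To conclude, $H$ stabilizes both triples, so by the definition of $\mathbf{K}$ as the minimal field over which both triples are defined, we get $\mathbf{K}\subset\mathbf{K}'$. Both fields are quadratic over $\mathbb{K}$, hence $\mathbf{K}'=\mathbf{K}$. The main obstacle I expect is the bookkeeping in the case analysis: making sure that the image of $H$ in $\mathfrak{S}_3$ contains a $3$-cycle, so that the action on the pairs is transitive, and that the only way to obtain rank $2$ is through the triple-preserving subgroup. A cleaner way to organize this is the following. Let $\bar H$ be the image of $H$ in $\mathfrak{S}_3$; it has index at most $2$, so it contains $\mathfrak{A}_3$. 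If $H$ swapped the triples, then $H$ would act transitively on all six curves. That transitivity follows because the orbit of $E_1$ would contain all the $E_i$ via $\mathfrak{A}_3$ and also some $F_j$. A single orbit gives invariant rank $1$.
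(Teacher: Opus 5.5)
Your proof is correct, but it takes a different route from the paper.

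\textbf{The paper's route.} It argues via the Sarkisov program. Since $X$ has Picard rank one, the involution of $\mathbf{K}'/\mathbb{K}$ swaps the two extremal contractions of $X_{\mathbf{K}'}$. These are therefore contractions onto minimal del Pezzo surfaces of the same degree $d'$. Iskovskikh's classification of links (Theorem~\ref{th:SarkisovlinksdP}) then forces $d'=9$, so the two contractions are $\eta$ and $\eta^{\mathrm{opp}}$, contracting the two triples. The uniqueness of Severi--Brauer data (Remark~\ref{remark:equivalentSeveriBrauerdata}) then gives $\mathbf{K}'=\mathbf{K}$.

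\textbf{Your route.} You work directly with the image $H$ of $\mathrm{Gal}(\overline{\mathbb{K}}/\mathbf{K}')$ in $G\subset\mathfrak{S}_3\times\mathbb{Z}/2\mathbb{Z}$. $H$ has index $2$ and contains the rotation of order $3$. Hence it either acts transitively on all six $(-1)$-curves, giving invariant rank $1$, or it stabilizes both triples, which gives $\mathbf{K}\subseteq\mathbf{K}'$ and hence equality.

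\textbf{Comparison.} Your argument is self-contained and elementary. It applies verbatim to non-rational $X$, whereas Theorem~\ref{th:SarkisovlinksdP} as quoted concerns rational surfaces, even though the lemma is stated for arbitrary minimal $X$. It also makes explicit something the paper leaves implicit: why neither extremal ray of $X_{\mathbf{K}'}$ can be a conic bundle or the contraction of a pair $\{E_i,F_i\}$. The paper's version is shorter and fits the link-theoretic viewpoint of the rest of the article.

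\textbf{Two points to tidy.}
\begin{enumerate}
\item Your claim $\bar H\supseteq\mathfrak{A}_3$ uses that $\bar H$ has index at most $2$ in the image $\bar G$ of $G$, not in $\mathfrak{S}_3$. You should state that $\bar G$ is transitive on the three pairs; otherwise some $E_i+F_i$ would be a $G$-invariant class, and it is not proportional to $K_X$.
\item The bulleted case list is unnecessary and partly misfiled: $\mathbb{Z}/3\mathbb{Z}\subset\mathbb{Z}/6\mathbb{Z}$ does not swap the triples. Your final paragraph already contains the complete argument.
\end{enumerate}
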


\begin{proof}
Assume that $\mathbf{K}' \neq \mathbf{K}.$ The extension $\mathbb{K} \subset \mathbf{K}'$ is Galois. This means that there are two contractions on minimal del Pezzo surfaces of the same degree $d'$. By Theorem~\ref{th:SarkisovlinksdP} we obtain $d'=9.$ So by Remark~\ref{remark:equivalentSeveriBrauerdata} we get that $\mathbf{K}=\mathbf{K}'.$

\end{proof}

For a variety $X$ we denote by $\mathrm{Ind}(X)$ the greatest common divisor of degrees of  points on~$X.$ Recall the following well-known property of del Pezzo surfaces of degree~$6.$ 

\begin{Lemma}[{see, for example,~\cite[Remark 5.1]{Yas}}]\label{lemma:ind6}
Let $X$ be a del Pezzo surface of degree $6$ over a field $\mathbb{K}.$  Then $\mathrm{Ind}(X)$ is equal to $1,$~$2,$~$3$ or~$6.$ Moreover,~\mbox{$\mathrm{Ind}(X)=1$} if and only if $X$ has a~$\mathbb{K}$-point.
 \end{Lemma}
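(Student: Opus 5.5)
The statement to prove is Lemma~\ref{lemma:ind6}: for a del Pezzo surface $X$ of degree $6$, $\mathrm{Ind}(X)$ divides $6$, and $\mathrm{Ind}(X)=1$ exactly when $X$ has a $\mathbb{K}$-point. I would prove the divisibility by exhibiting points of degree $2$ and of degree $3$ on $X$, or at least zero-cycles of these degrees. The second part would then follow from an argument that index one forces a rational point.

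For the divisibility, use the Severi--Brauer data $(\mathbf{F}, \mathbf{K}, V, \mathbf{L}, Y)$ (or the analogous structure when $X$ is not minimal).

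\textbf{Degree-$3$ cycle.} Over the quadratic extension $\mathbf{K}$ there is a contraction $\eta\colon X_{\mathbf{K}}\to S$ to a Severi--Brauer surface, contracting the triple $\{E_1,E_2,E_3\}$. The image of this triple is a point of degree $3$ on $S$. Its Galois conjugate under $\mathrm{Gal}(\mathbf{K}/\mathbb{K})$ is the triple $\{F_1,F_2,F_3\}$. Hence the union $E_1\cup E_2\cup E_3$ is a $\mathbb{K}$-defined curve. Its intersection with a $\mathbb{K}$-defined anticanonical curve is the anticanonical degree $-K_X\cdot(E_1+E_2+E_3)=3$, so this gives a zero-cycle of degree $3$ over $\mathbb{K}$.

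\textbf{Degree-$2$ cycle.} Over $\mathbf{L}$ the pair $\{E_1,F_1\}$ is defined, and $E_1+F_1$ is a fibre class of a conic bundle. More directly, the divisor $\sum_i (E_i+F_i)$ is $-K_X$ and gives nothing new. Instead, consider the $\mathbb{K}$-defined divisor $E_1+F_1+E_2+F_2+E_3+F_3$ intersected with $E_1+E_2+E_3$. I expect it is cleaner to take a $\mathbb{K}$-defined conic fibration class pulled back as follows. The three classes $E_i+F_i$ are permuted by Galois, so their sum is $-K_X$. Each $E_i+F_i$ meets each $E_j+F_j$ ($i\neq j$) in $2$ points; pairwise intersections of conics from two different families therefore give points of degree dividing combinations like $6\cdot 2$.

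The cleanest route is standard: $X$ contains the $\mathbb{K}$-defined hexagon $\bigcup E_i\cup F_i$. Its six vertices $E_i\cap F_j$ ($i\neq j$) form a $\mathbb{K}$-defined zero-dimensional subscheme of degree $6$. The three vertex pairs $\{E_i\cap F_j,\,E_j\cap F_i\}$ form a Galois-stable partition. Combining the two descents, the vertices fall into orbits of the groups $\mathbb{Z}/2$ and $\mathbb{Z}/3$ acting on the hexagon. In particular, passing to the degree-$2$ field $\mathbf{K}$ or the degree-$3$ field $\mathbf{L}$ shows the index divides $6$. For the finer statement one uses that a Severi--Brauer surface of index $3$ has a degree-$3$ point, and a product of conics splits over a quadratic extension. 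Tracking degrees through $\eta$ and $\nu_i$ then yields $\mathrm{Ind}(X)\mid 6$.

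For the equivalence, a $\mathbb{K}$-point trivially gives index $1$. Conversely, suppose $\mathrm{Ind}(X)=1$. Then $V$ has a point over an extension of $\mathbf{K}$ of degree prime to $3$, hence splits by Châtelet/Severi--Brauer index theory. Likewise $Y$ has points over an extension of $\mathbf{L}$ of odd degree, so both conics are split by Springer's theorem. From this, $X$ is the blowup picture of a torsor under a two-dimensional torus of index $1$. The main obstacle is concluding that this torsor is trivial. I would invoke the known result that a $\mathrm{Gal}$-twisted degree-$6$ del Pezzo surface with split $V$ and $Y$ is determined by its Severi--Brauer data, and hence has a point. Alternatively, I would cite directly the equivalence in Yasinsky's paper, as the lemma's attribution suggests.
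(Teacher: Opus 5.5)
\textbf{Part one: the index divides $6$.} Your plan here defeats itself. If $X$ always carried $\mathbb{K}$-rational zero-cycles of degrees $2$ and $3$, then $\mathrm{Ind}(X)=1$ for every sextic del Pezzo surface. The second assertion would then give every such surface a $\mathbb{K}$-point, which is false. For example, blow up a nontrivial Severi--Brauer surface $V$ at a point of degree $3$. This gives a sextic del Pezzo surface whose zero-cycles all push forward to $V$, so all have degree divisible by $3$.

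The concrete error is in your degree-$3$ step. You observe that the nontrivial element of $\mathrm{Gal}(\mathbf{K}/\mathbb{K})$ sends $\{E_1,E_2,E_3\}$ to $\{F_1,F_2,F_3\}$, and then conclude that $E_1\cup E_2\cup E_3$ is defined over $\mathbb{K}$. It is only defined over $\mathbf{K}$. Cutting it with an anticanonical curve gives a cycle of degree $3$ over $\mathbf{K}$, i.e.\ of degree $6$ over $\mathbb{K}$. Your degree-$2$ paragraph produces no cycle at all, and $[\mathbf{L}:\mathbb{K}]$ may be $6$, not $3$.

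What you need is the one observation buried in the middle. The six vertices $E_i\cap F_j$, $i\neq j$, of the hexagon form a Galois-stable set, hence a $\mathbb{K}$-rational zero-cycle of degree $6$, so $\mathrm{Ind}(X)\mid 6$. Keep that and discard the rest.

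\textbf{Part two: index $1$ forces a $\mathbb{K}$-point.} Your reduction to the splitting of $V$ and of the conics in $Y$ is correct, but the decisive step is left unproved. The justification ``$X$ is determined by its Severi--Brauer data, hence has a point'' is incomplete in two ways. It needs a comparison surface with the same Galois action on the hexagon and a $\mathbb{K}$-point (the toric one). Also, the classification in Remark~\ref{remark:equivalentSeveriBrauerdata} covers only minimal surfaces, while the lemma concerns all sextic del Pezzo surfaces, such as the non-minimal example above.

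The missing idea is a restriction--corestriction argument, which needs neither $V$ nor $Y$:
\begin{enumerate}
\item The complement $U$ of the hexagon is a torsor under a two-dimensional $\mathbb{K}$-torus $T$, with class $[U]\in H^1(\mathbb{K},T)$.
\item Let $x\in X$ be a closed point of degree $d$. Then $X_{\mathbb{K}(x)}$ has a point, so it is $\mathbb{K}(x)$-rational by Theorem~\ref{theorem:rationalityd>5}, and therefore $U(\mathbb{K}(x))\neq\emptyset$. For finite $\mathbb{K}$, Lang's theorem gives a point directly.
\item Hence $[U]$ restricts to zero over the separable extension $\mathbb{K}(x)$, and $d\,[U]=\mathrm{cores}\circ\mathrm{res}\,[U]=0$.
\item If $\mathrm{Ind}(X)=1$, these degrees $d$ have gcd $1$. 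So $[U]=0$ and $U(\mathbb{K})\neq\emptyset$.
\end{enumerate}

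\textbf{Comparison with the paper.} The paper gives no argument; it only cites Remark~5.1 of the reference named in the statement. Your final fallback of citing that remark therefore matches the paper. As a self-contained proof, however, your proposal has the error above in part one and an unproved key step in part two.
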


Now let us give the description of birational maps from minimal del Pezzo surfaces of degree $6$  without $\mathbb{K}$-points.

\begin{Prop}[{\cite[Proposition 7.4]{Yas}}]\label{prop:yas}
Let $X$ be a minimal del Pezzo surface of degree $6$  over a field $\mathbb{K}.$ Assume that $X$ does not have  $\mathbb{K}$-points. Let  $X'$ be a minimal del Pezzo surface  over $\mathbb{K}$ which is birational to $X.$ Then~$X'$ is also a del Pezzo surface of degree $6$ and either $X \simeq X',$ or  there is a birational map~\mbox{$f : X \DashedArrow[->,densely dashed    ] X'$} which is a Sarkisov link of type~\rom{2}.

\end{Prop}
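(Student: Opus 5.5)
The plan is to run the Sarkisov program for surfaces and use the absence of rational points as the main constraint. Take any birational map $X \DashedArrow[->,densely dashed] X'$. By the Sarkisov program over a perfect field (\cite{Isk}), it factors as a chain of Sarkisov links
$$
X=Z_0 \DashedArrow[->,densely dashed] Z_1 \DashedArrow[->,densely dashed] \cdots \DashedArrow[->,densely dashed] Z_n=X'
$$
between Mori fibre spaces. By Theorem~\ref{theorem:Lang-Nishimura}, none of the $Z_i$ has a $\mathbb{K}$-point, since $X$ has none.

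First I show that every $Z_i$ is a minimal del Pezzo surface of degree~$6$. By Theorem~\ref{th:dp5point} and Theorem~\ref{theorem:rationalityd>5}, no $Z_i$ lies in $\mathcal{D}_5$, $\mathcal{D}_8$ or $\mathcal{C}_5$. For $\mathcal{D}_8$, a minimal quadric with a point would be rational and so have a point; I argue that in the chain every quadric would have to be reached from $\mathcal{D}_6$, which requires a point, as follows. Each $Z_i$ is birational to $X$, hence is a rational surface over $\overline{\mathbb{K}}$ birational to $X$ over $\mathbb{K}$. Reading off Figure~\ref{figure:Sarkisovlinks}, a link starting at a vertex in $\mathcal{D}_6$ either stays in $\mathcal{D}_6$ (types II:$d$:$d$) or goes to $\mathcal{D}_8$ via II:3:1. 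The latter blows up a point of degree~$1$ on the target, that is, a $\mathbb{K}$-point on a surface birational to $X$, which is impossible. Hence by induction on $i$, starting from $Z_0=X\in\mathcal{D}_6$, every link in the chain is a type~\rom{2} link $\mathcal{D}_6 \DashedArrow[->,densely dashed] \mathcal{D}_6$. In particular $X'$ has degree~$6$. (Here I am treating Theorem~\ref{th:SarkisovlinksdP} as applying to all minimal geometrically rational surfaces in the relevant classes; if it only covers rational ones, I would instead invoke Iskovskikh's classification of links, which has the same edges.)

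The main step, and the main obstacle, is collapsing an arbitrary chain of II:$d$:$d$ self-links on $\mathcal{D}_6$ into a single link or an isomorphism. I would argue via invariants. The degrees of points blown up lie in $\{2,3,4,5\}$. Since $\mathrm{Ind}(X)\ne 1$ by Lemma~\ref{lemma:ind6}, only degrees~$2,3,4$ can occur, with the actual possibilities determined by $\mathrm{Ind}$. I would then show that each link preserves some pieces of the Severi--Brauer data and changes others in a controlled way. A link II:2:2 should preserve $\mathbf{K}$, $\mathbf{L}$ and the class of $V$, and act on $Y$ by a specific operation on the two conic factors. A link II:3:3 should preserve $Y$ and replace $V$ by $S^{\mathrm{opp}}$ or a twist. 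Using Remark~\ref{remark:equivalentSeveriBrauerdata}, the isomorphism class of a degree-$6$ surface is determined by its data up to equivalence. So it suffices to show that the set of data reachable from $X$ by chains is reached in one step. Here $\mathbf{F}$, $\mathbf{K}$ and $\mathbf{L}$ are fixed by Lemma~\ref{lemma:extensiondeg2dP6unique}. The Brauer classes of $V$ and of the conics in $Y$ can only move within a small finite group generated by the original classes: the opposite algebra, and the classes split by $\mathbf{K}$ or $\mathbf{L}$. I would check case by case, on $\mathrm{Ind}(X)\in\{2,3,6\}$, that each reachable configuration is obtained by one link, constructing that link by blowing up an appropriate point of degree $2$, $3$ or $4$ in Sarkisov general position.

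This bookkeeping is exactly the content of \cite[Proposition 7.4]{Yas}, so the cleanest route is to cite it, with the above as a description of its mechanism. In particular, the step to verify carefully is that composing two links of different degrees never yields a surface unreachable in one link. I expect this to hinge on the group of reachable Brauer classes having exponent dividing~$6$, with a suitable point of degree $2$ or $3$ always available on $X$ by the index computation.
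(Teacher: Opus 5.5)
Your proposal follows the paper's proof. Lang--Nishimura together with Iskovskikh's classification of links shows that every surface in the chain is a pointless minimal del Pezzo surface of degree~$6$, and the collapse to a single link or an isomorphism is then exactly \cite[Proposition~7.4]{Yas}. (The link to degree~$8$ blows up a $\mathbb{K}$-point on the degree-$6$ side, not on the target, but your conclusion is unaffected.)

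Treat your ``mechanism'' paragraph only as motivation, because it is inaccurate. A link of type~\rom{2} blowing up a point of degree~$2$ with splitting field $\mathbf{K}''$ produces a surface whose quadratic field is $\mathbf{K}''$ (compare Lemma~\ref{lemma:linkd6d6p2}). Moreover, Lemma~\ref{lemma:extensiondeg2dP6unique} concerns a single surface, so $\mathbf{K}$ need not be fixed along a chain.
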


\begin{proof}

 By Lemma~\ref{lemma:ind6} we have $\mathrm{Ind}(X) \neq 1.$ By~\cite[Theorem 2.6]{Isk} we get that any Sarkisov link starting from $X$ ends up with minimal del Pezzo surface of degree~$6.$   Therefore,  by~\cite[Proposition 7.4]{Yas}, which says that any birational non-rational non-isomorphic minimal del Pezzo surfaces of degree~$6$ can be connected with a Sarkisov link, we get the desired.

\end{proof}

\section{Invariants of rational del Pezzo surfaces}\label{section:invariants}

In this section we define the invariants for rational del Pezzo surfaces of degrees~$5,$~$6$ and $8.$

\begin{Lemma}\label{lemma:dp5M}
Let $X$ be a minimal del Pezzo surface of degree $5$  over a field~$\mathbb{K}.$ Then up to isomorphism $X$ is defined by the minimal extension $\mathbb{K} \subset \mathbf{M}$  over which all~$(-1)$-curves on $X_{\overline{\mathbb{K}}}$ are defined. 
\end{Lemma}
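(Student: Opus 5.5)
We have to show that a minimal del Pezzo surface $X$ of degree~$5$ over~$\mathbb{K}$ is determined up to isomorphism by the splitting field~$\mathbf{M}$ of its $(-1)$-curves. The plan is to view $X$ as a twisted form of the split quintic del Pezzo surface and use Galois cohomology, together with the fact that a quintic del Pezzo surface has a $\mathbb{K}$-point (Theorem~\ref{th:dp5point}).

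Let $X_0$ be the split del Pezzo surface of degree~$5$, i.e.\ $\mathbb{P}^2$ blown up at four $\mathbb{K}$-points in general position. Its automorphism group $\mathrm{Aut}(X_0)=\mathfrak{S}_5$ is a constant finite group, and the action on the ten $(-1)$-curves, i.e.\ on the Petersen graph, identifies $\mathfrak{S}_5$ with the full symmetry group of that configuration. Forms of $X_0$ over $\mathbb{K}$ are therefore classified by
$$
H^1(\mathrm{Gal}(\overline{\mathbb{K}}/\mathbb{K}),\mathfrak{S}_5)=\mathrm{Hom}(\mathrm{Gal}(\overline{\mathbb{K}}/\mathbb{K}),\mathfrak{S}_5)/\text{conjugation}.
$$
The class of $X$ corresponds to the homomorphism $\rho\colon \mathrm{Gal}(\overline{\mathbb{K}}/\mathbb{K})\to\mathfrak{S}_5$ describing the Galois action on the $(-1)$-curves of $X_{\overline{\mathbb{K}}}$, and its kernel cuts out exactly $\mathbf{M}$. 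So $X$ is determined by the conjugacy class of an injective homomorphism $\mathrm{Gal}(\mathbf{M}/\mathbb{K})\hookrightarrow\mathfrak{S}_5$.

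Next we use minimality. Since $\rho(G)$, where $G=\mathrm{Gal}(\mathbf{M}/\mathbb{K})$, must leave no Galois-invariant set of disjoint $(-1)$-curves, the invariant Picard rank is one. Conjugacy classes of subgroups of $\mathfrak{S}_5$ with this property are those containing a $5$-cycle: $\mathbb{Z}/5\mathbb{Z}$, $\mathfrak{D}_5$, $\mathrm{F}_{20}=\mathbb{Z}/5\rtimes\mathbb{Z}/4$, $\mathfrak{A}_5$ and $\mathfrak{S}_5$. This can be checked on the Petersen graph, or equivalently through the $\mathfrak{S}_5$-action on the five conic bundle structures: if there is no fixed conic bundle class and no invariant set of disjoint $(-1)$-curves, the group acts transitively on $\{1,\dots,5\}$.

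The main step is to show that for each such group $H$, any two embeddings $G\cong H\hookrightarrow\mathfrak{S}_5$ with image in this list are conjugate in $\mathfrak{S}_5$, i.e.\ that $H$ has a unique conjugacy class of transitive embeddings and that every automorphism of $H$ is induced by conjugation inside $\mathfrak{S}_5$. We would check this case by case.

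\begin{itemize}
\item For $\mathbb{Z}/5$, every automorphism is realized by conjugation, since $N_{\mathfrak{S}_5}(\mathbb{Z}/5)=\mathrm{F}_{20}$ induces the full $(\mathbb{Z}/5)^\times$.
\item For $\mathfrak{D}_5$, $\mathrm{Aut}(\mathfrak{D}_5)\cong\mathrm{F}_{20}$, which is again the normalizer.
\item For $\mathrm{F}_{20}$, the group is self-normalizing and complete, so there is nothing to check.
\item For $\mathfrak{S}_5$, the group is complete, so again nothing is needed.
\item $\mathfrak{A}_5$ is the delicate case. Its outer automorphism is induced by conjugation in $\mathfrak{S}_5$, and its transitive action of degree~$5$ is unique up to conjugation.
\end{itemize}

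One more subtlety is that the action on $(-1)$-curves must be faithful and injectivity fixes the field. Given the same $\mathbf{M}$, two surfaces $X,X'$ give surjections of $\mathrm{Gal}(\overline{\mathbb{K}}/\mathbb{K})$ onto isomorphic subgroups with the same kernel, so they differ by an automorphism of $G$. By the check above this automorphism is inner in $\mathfrak{S}_5$, hence $X\simeq X'$.
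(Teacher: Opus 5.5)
Your proposal is correct and follows the paper's route: forms of the split quintic del Pezzo surface are classified by homomorphisms $\mathrm{Gal}(\overline{\mathbb{K}}/\mathbb{K})\to\mathfrak{S}_5$ up to conjugation, the kernel cuts out $\mathbf{M}$, and minimality forces the image to be one of the transitive subgroups $\mathbb{Z}/5\mathbb{Z}$, $\mathfrak{D}_5$, $\mathbb{Z}/5\mathbb{Z}\rtimes\mathbb{Z}/4\mathbb{Z}$, $\mathfrak{A}_5$, $\mathfrak{S}_5$ (the $\mathbb{K}$-point you mention is never actually used). Your case-by-case check that $N_{\mathfrak{S}_5}(H)\to\mathrm{Aut}(H)$ is surjective for each such $H$ supplies a step the paper leaves implicit. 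The paper only notes that each $H$ is unique up to conjugacy, and that alone does not show that two homomorphisms with the same kernel are conjugate in $\mathfrak{S}_5$.
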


\begin{proof}
By~\cite[Theorem~3.1.3]{Skorobogatov} there is a bijection between isomorphism classes of del Pezzo surfaces of degree~$5$ over $\mathbb{K}$ and the set of continuous homomorphisms
$$
\mathrm{Gal}\left(\overline{\mathbb{K}}/\mathbb{K}\right) \to \mathfrak{S}_5
$$
up to conjugation in $\mathfrak{S}_5.$ By~\cite[Lemma 2.3]{Zaitsev}   this bijection maps a del Pezzo surface~$X$ of degree~$5$ to  the homomorphism $\mathfrak{f}_X \colon \mathrm{Gal}\left(\overline{\mathbb{K}}/\mathbb{K}\right) \to \mathfrak{S}_5$ such that 
$$
\mathrm{Ker}(\mathfrak{f}_X)=\mathrm{Gal}\left(\overline{\mathbb{K}}/\mathbf{M}\right)
$$
 with the minimal extension~\mbox{$\mathbb{K} \subset \mathbf{M}$}  over which all~$(-1)$-curves on $X_{\overline{\mathbb{K}}}$ are defined.  By~\cite[Corollaries~8.3 and 8.5]{Zaitsev} since $X$ is a minimal del Pezzo surface of degree~$5,$  the image of~$\mathfrak{f}_X$ is one of the following subgroups in the group~$\mathfrak{S}_5:$
\begin{equation}\label{eq:subgroupS5}
 \mathfrak{S}_5, \; \mathfrak{A}_5,  \; \mathfrak{D}_5, \; \mathbb{Z}/5\mathbb{Z} \rtimes \mathbb{Z}/4\mathbb{Z}, \; \mathbb{Z}/5\mathbb{Z}.
\end{equation}
It is not hard to see that any subgroup of~$\mathfrak{S}_5$ among~\eqref{eq:subgroupS5}  is unique up to conjugation. This means that a minimal del Pezzo surface $X$ of degree~$5$ up to isomorphism is uniquely defined by $\mathrm{Ker}(\mathfrak{f}_X),$
and, therefore, $X$ is uniquely defined by the field~$\mathbf{M}.$ This completes the proof.

\end{proof}

\begin{Lemma}\label{lemma:dp6KL}
Let $X$ be a minimal rational del Pezzo surface of degree $6$ over a field~$\mathbb{K}.$  Let~\mbox{$(\mathbf{F}, \mathbf{K}, V, \mathbf{L}, Y)$} be the  Severi--Brauer data for $X.$  Then up to isomorphism~$X$ is defined by the fields $\mathbf{K}$ and $\mathbf{L}.$
\end{Lemma}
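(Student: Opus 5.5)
The plan is to use Remark~\ref{remark:equivalentSeveriBrauerdata}. Two minimal del Pezzo surfaces of degree~$6$ are isomorphic exactly when their Severi--Brauer data are equivalent. So it suffices to show the following. If $X$ is rational and $X'$ is another minimal rational del Pezzo surface of degree~$6$ with the same fields $\mathbf{K}$ and $\mathbf{L}$, then all the remaining data agree: $\mathbf{F}=\mathbf{F}'$, $V$ is $\mathbf{K}$-birational to $V'$, and $Y$ is $\mathbf{L}$-birational to $Y'$.

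The field equality is immediate, since $\mathbf{F}=\mathbf{K}\mathbf{L}=\mathbf{K}'\mathbf{L}'=\mathbf{F}'$. For the surfaces we use rationality. Since $X$ is $\mathbb{K}$-rational, $\mathbb{P}^2$ has a $\mathbb{K}$-point, so Theorem~\ref{theorem:Lang-Nishimura} shows that $X$ has a $\mathbb{K}$-point~$p$. Base change to $\mathbf{K}$ gives a $\mathbf{K}$-point on $X_{\mathbf{K}}$. The contraction $\eta\colon X_{\mathbf{K}}\to S$ is a birational morphism, so its image is a $\mathbf{K}$-point on $S$, and likewise on $S^{\text{opp}}$. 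A Severi--Brauer surface with a rational point is isomorphic to $\mathbb{P}^2$, so $V\simeq \mathbb{P}^2_{\mathbf{K}}$. The same argument gives $V'\simeq\mathbb{P}^2_{\mathbf{K}}$, hence $V$ and $V'$ are $\mathbf{K}$-birational.

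For $Y=C_j\times C_k$ over $\mathbf{L}$, the image of $p$ under $\nu_i$ is an $\mathbf{L}$-point of $C_j\times C_k$. Projecting it gives $\mathbf{L}$-points on both conics, so $C_j\simeq C_k\simeq \mathbb{P}^1_{\mathbf{L}}$ and therefore $Y\simeq \mathbb{P}^1_{\mathbf{L}}\times\mathbb{P}^1_{\mathbf{L}}$. The same holds for $Y'$, so $Y$ and $Y'$ are $\mathbf{L}$-birational. All components of the two Severi--Brauer data are then equivalent, and Remark~\ref{remark:equivalentSeveriBrauerdata} gives $X\simeq X'$.

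The one step that needs care is the bookkeeping with the non-Galois subfields. We need the product of conics $Y=Y_i$ to be defined over the Galois field $\mathbf{L}$ itself, not only over some $\mathbf{L}_i$, so that the rational point lands in the correct field. This is exactly the normalization fixed in the Remark preceding Definition~\ref{def:SBdata}, so no extra argument should be needed there. The rest is formal, assuming the cited Theorems~3.10, 3.14 and~3.19 of~\cite{Yas} are used in the form stated in Remark~\ref{remark:equivalentSeveriBrauerdata}.
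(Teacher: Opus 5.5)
Your proposal is correct and follows essentially the same argument as the paper. Rationality together with Theorem~\ref{theorem:Lang-Nishimura} gives a $\mathbb{K}$-point, which forces $V\simeq\mathbb{P}^2_{\mathbf{K}}$ and $Y\simeq\mathbb{P}^1_{\mathbf{L}}\times\mathbb{P}^1_{\mathbf{L}}$; then $\mathbf{F}=\mathbf{K}\mathbf{L}$ fixes the remaining Severi--Brauer data, and Remark~\ref{remark:equivalentSeveriBrauerdata} yields the isomorphism, with you spelling out the steps the paper leaves implicit (pushing the point forward along $\eta$ and $\nu_i$, and the explicit appeal to the remark).
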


\begin{proof}
By rationality of $X$ and Theorem~\ref{theorem:Lang-Nishimura} we get that $V \simeq \mathbb{P}^2_{\mathbf{K}}$ and $Y \simeq \mathbb{P}^1_{\mathbf{L}} \times \mathbb{P}^1_{\mathbf{L}},$ as $\mathbb{K} \subset \mathbf{L}$ is a Galois extension.  Since $\mathbf{F}=\mathbf{K}\mathbf{L},$ we get the desired.

\end{proof}

Now we give a well-known classification of minimal rational del Pezzo surface of degree~$8.$

\begin{Lemma}[{\cite[Lemma 7.3(\romsmall{1})]{ShramovVologodsky}}]\label{lemma:dP8Weil}
Let $X$ be a minimal rational del  Pezzo surface of degree $8$  over $\mathbb{K}.$ Then there is a quadratic extension $\mathbb{K} \subset \mathbf{K},$ such that $X$  is isomorphic to the Weil restriction of scalars $\mathrm{R}_{\mathbf{K}/\mathbb{K}}(\mathbb{P}^1).$ Moreover, the quadratic extension~\mbox{$\mathbb{K} \subset \mathbf{K}$} is uniquely defined by $X.$
\end{Lemma}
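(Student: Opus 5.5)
The statement is Lemma~\ref{lemma:dP8Weil}: a minimal rational del Pezzo surface $X$ of degree $8$ over $\mathbb{K}$ is isomorphic to $\mathrm{R}_{\mathbf{K}/\mathbb{K}}(\mathbb{P}^1)$ for a quadratic extension $\mathbf{K}$ that is uniquely determined by $X$. The plan is to identify $X$ as a twisted form of $\mathbb{P}^1\times\mathbb{P}^1$ in which the Galois action swaps the two rulings, and then to use rationality to make both ruling factors trivial.

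First, $X_{\overline{\mathbb{K}}}$ is a del Pezzo surface of degree $8$. It cannot be $\mathbb{F}_1$, since its unique $(-1)$-curve would be Galois-invariant and could be contracted, contradicting minimality. Hence $X_{\overline{\mathbb{K}}}\simeq\mathbb{P}^1\times\mathbb{P}^1$, and $\mathrm{Pic}(X_{\overline{\mathbb{K}}})=\mathbb{Z}f_1\oplus\mathbb{Z}f_2$. Because $\mathrm{Pic}(X)$ has rank one, some Galois element must swap $f_1$ and $f_2$. The kernel of the action on $\{f_1,f_2\}$ is therefore an index-two subgroup, which defines a quadratic extension $\mathbb{K}\subset\mathbf{K}$. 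This $\mathbf{K}$ is intrinsic to $X$: it is the minimal extension over which both rulings are defined, which gives the uniqueness claim. Alternatively, uniqueness follows from $\mathrm{R}_{\mathbf{K}/\mathbb{K}}(\mathbb{P}^1)_{\overline{\mathbb{K}}}$ having rulings indexed by the two embeddings $\mathbf{K}\to\overline{\mathbb{K}}$.

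Over $\mathbf{K}$, both classes $f_1,f_2$ are invariant, so the two projections descend and $X_{\mathbf{K}}\simeq C_1\times C_2$ with $C_1,C_2$ conics over $\mathbf{K}$. The nontrivial element of $\mathrm{Gal}(\mathbf{K}/\mathbb{K})$ exchanges the factors, so $C_2\simeq C_1^{\sigma}$. By rationality and Theorem~\ref{theorem:Lang-Nishimura}, $X$ has a $\mathbb{K}$-point. Its image in $C_1$ is a $\mathbf{K}$-point, so $C_1\simeq\mathbb{P}^1_{\mathbf{K}}$, and likewise $C_2$. The descent datum is then exactly the one defining the Weil restriction: $\mathrm{R}_{\mathbf{K}/\mathbb{K}}(C_1)$ is the unique $\mathbb{K}$-form of $C_1\times C_1^{\sigma}$ whose Galois action swaps the factors in the canonical way. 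Hence $X\simeq\mathrm{R}_{\mathbf{K}/\mathbb{K}}(\mathbb{P}^1_{\mathbf{K}})$.

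The main obstacle is this descent step: showing that the twisted form is determined by $C_1$ alone. I would handle it through the universal property of the Weil restriction. The projection $X_{\mathbf{K}}\to C_1$ corresponds by adjunction to a $\mathbb{K}$-morphism $X\to\mathrm{R}_{\mathbf{K}/\mathbb{K}}(C_1)$. After base change to $\overline{\mathbb{K}}$ this morphism is the product map $X_{\overline{\mathbb{K}}}\to C_1\times C_1^{\sigma}$, which is an isomorphism, so the original morphism is an isomorphism as well.
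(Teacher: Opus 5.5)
Your proof is correct. The comparison is not really between two routes: the paper gives no argument of its own here and simply quotes Lemma~7.3(i) of Shramov--Vologodsky, whereas you supply a self-contained proof.

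Your argument runs as follows. You exclude $\mathbb{F}_1$ and read off $\mathbf{K}$ as the fixed field of the kernel of the Galois action on the two rulings. You then split $X_{\mathbf{K}}\simeq C\times C^{\sigma}$. Finally, the adjunction defining $\mathrm{R}_{\mathbf{K}/\mathbb{K}}$ gives a $\mathbb{K}$-morphism $X\to\mathrm{R}_{\mathbf{K}/\mathbb{K}}(C)$ which is geometrically the product of the two ruling projections, hence an isomorphism.

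Writing this out has two benefits.
\begin{enumerate}
\item It shows that rationality enters only once, through Theorem~\ref{theorem:Lang-Nishimura}, to force $C\simeq\mathbb{P}^1_{\mathbf{K}}$. Without rationality you still get $X\simeq\mathrm{R}_{\mathbf{K}/\mathbb{K}}(C)$ for some conic $C$ over $\mathbf{K}$.
\item It makes explicit that $\mathbf{K}$ is the minimal field over which both rulings are defined. The paper uses this later without comment, for instance in the proofs of Lemmas~\ref{lemma:linkd8d6} and~\ref{lemma:linkd9d8}.
\end{enumerate}

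One presentational point on uniqueness. Your two remarks are not alternatives but two halves of a single argument. The intrinsic description shows that the field is an isomorphism invariant of $X$. The description of $\mathrm{R}_{\mathbf{K}'/\mathbb{K}}(\mathbb{P}^1)_{\overline{\mathbb{K}}}$ as a product indexed by the two embeddings shows that this invariant equals $\mathbf{K}'$ for the Weil restriction. Only together do they give $\mathbf{K}_1=\mathbf{K}_2$ whenever $\mathrm{R}_{\mathbf{K}_1/\mathbb{K}}(\mathbb{P}^1)\simeq\mathrm{R}_{\mathbf{K}_2/\mathbb{K}}(\mathbb{P}^1)$.
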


\section{Points in Sarkisov general position}\label{section:existencepointssarkisov}

In this section we study existence of points in Sarkisov general position on minimal del Pezzo surfaces of degree~$5$ and~$6.$ First of all, we show that any $\mathbb{K}$-point on a minimal rational del Pezzo surface is in Sarkisov general position.

\begin{Lemma}[{\cite[Lemma 2.6]{BernasconiTanaka}}]\label{lemma:point>4generalpos}
Let $X$ be a minimal del Pezzo surface of degree~$d \geqslant 4$ over $\mathbb{K}.$ Then any $\mathbb{K}$-point $P \in X$ is in Sarkisov general position. 
\end{Lemma}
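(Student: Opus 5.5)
The statement to prove is that for a minimal del Pezzo surface $X$ of degree $d \geqslant 4$, the blowup $\pi \colon \widetilde{X} \to X$ at a $\mathbb{K}$-point $P$ is again a del Pezzo surface. We then have $K_{\widetilde{X}}^2 = d-1 \geqslant 3 > 0$. By the standard criterion for weak del Pezzo surfaces, $-K_{\widetilde{X}}$ is ample if and only if $\widetilde{X}_{\overline{\mathbb{K}}}$ has no $(-2)$-curves. Equivalently, in terms of $X_{\overline{\mathbb{K}}}$: a blowup of a del Pezzo surface of degree $d \geqslant 2$ in one point is del Pezzo if and only if the point does not lie on a $(-1)$-curve. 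This holds for $d\geqslant 3$, where the relevant negative curves on the blowup come only from $(-1)$-curves through the point; for degree $\geqslant 4$ after the blowup we stay in degree $\geqslant 3$, where only these configurations matter. The plan is therefore to show that $P_{\overline{\mathbb{K}}}$ lies on no $(-1)$-curve of $X_{\overline{\mathbb{K}}}$.

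First I justify this reduction. On $\widetilde{X}_{\overline{\mathbb{K}}}$, any irreducible curve $C$ with $-K\cdot C \leqslant 0$ has negative self-intersection, since $K^2>0$ and Hodge index apply. If $C$ is not the exceptional curve, it is a strict transform $\pi^{-1}_*C_0$ with $-K_{\widetilde{X}}\cdot C = -K_X\cdot C_0 - \mathrm{mult}_P C_0$. For $C_0$ smooth rational with $-K_X\cdot C_0=1$, i.e.\ a $(-1)$-curve through $P$, this gives $0$ and a $(-2)$-curve. Otherwise one must check that $-K_X\cdot C_0 \leqslant \mathrm{mult}_P C_0$ is impossible with $C$ negative. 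By adjunction on $\widetilde X$ and ampleness of $-K_X$, together with $d\geqslant 4$ (so $-K_X$ is very ample and embeds $X$ in $\mathbb{P}^d$ as a surface of degree $d$), any such curve would be a line through $P$ or a singular curve with a point of multiplicity $\geqslant$ its degree. Only lines can occur, and lines are exactly the $(-1)$-curves. I expect this case check to be the main technical point; alternatively I would cite the known description of $\overline{\mathbb{K}}$-points in general position on del Pezzo surfaces of degree $\geqslant 3$.

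Second, the Galois argument. Suppose $P$ lies on some $(-1)$-curve $E\subset X_{\overline{\mathbb{K}}}$. Since $P$ is $\mathbb{K}$-rational, every Galois conjugate of $E$ also passes through $P$. Let $\Sigma$ be the Galois orbit of $E$. The sum $D=\sum_{E'\in\Sigma}E'$ is then defined over $\mathbb{K}$, and because $\mathrm{Pic}(X)=\mathbb{Z}$ it follows that $D \sim_{\mathbb{Q}} -aK_X$ for some $a>0$.

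Third, I rule this out for each degree $d \in \{4,5,6,7,8,9\}$. For $d=9$ and for $d=8$ with $X\simeq \mathbb{P}^1\times\mathbb{P}^1$ twisted, there are no $(-1)$-curves, so there is nothing to prove. For $d=7$ and for the surface $\mathbb{F}_1$ in degree $8$, $X$ is never minimal. For $d=6$, the $(-1)$-curves form a hexagon and no three of them are concurrent, so at most $2$ pass through $P$; a Galois-stable subset of the hexagon through a single point would then have size $\leqslant 2$, contradicting the transitivity of the Galois action on the six curves that minimality forces, since their sum must be proportional to $-K_X$. For $d=5$, at most $2$ of the $(-1)$-curves pass through a point (they meet pairwise transversally and configurations of three concurrent lines do not occur), while minimality forces orbits of length $5$ or $10$. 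For $d=4$, through a point pass at most $2$ lines, again because three concurrent lines do not occur on a quartic del Pezzo surface, while minimality forces an orbit whose sum is proportional to $-K_X$, hence of length at least $4$. In every case we obtain a contradiction, and the lemma follows.
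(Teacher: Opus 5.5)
Your argument is correct. The paper does not prove this lemma at all; it quotes it from Bernasconi--Tanaka (Lemma~2.6 there). So your proposal is an independent, self-contained route, and it has two parts.

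The first part is a geometric reduction. In the anticanonical embedding, which is very ample already for $d\geqslant 3$, an irreducible curve has multiplicity smaller than its degree at every point unless it is a line. This gives $-K_{\widetilde X}\cdot C\geqslant 1$ for every curve $C\subset\widetilde X_{\overline{\mathbb{K}}}$ as soon as $P$ lies on no $(-1)$-curve. Nakai--Moishezon with $K_{\widetilde X}^2=d-1>0$ then finishes; this is cleaner than appealing to the weak del Pezzo criterion or Hodge index.

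The second part is a Galois-orbit count that rules out $(-1)$-curves through a $\mathbb{K}$-point. Compared with the citation, your route costs some space. In return it shows exactly where $d\geqslant 4$ enters, and why the count gives no contradiction for $d=3$: three coplanar lines through an Eckardt point sum to $-K_X$.

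The one step you assert rather than prove is the lower bound on orbit lengths in degrees $6$, $5$, $4$: transitivity on the hexagon, orbits of length $5$ or $10$, and length at least $4$. A uniform justification covers all three:
\begin{itemize}
\item Write $D=\sum_{E'\in\Sigma}E'\equiv -aK_X$.
\item Intersecting $D$ with any $(-1)$-curve gives $a\in\mathbb{Z}_{>0}$.
\item Hence $|\Sigma|=-K_X\cdot D=ad\geqslant d\geqslant 4$.
\item Concurrent curves pairwise meet, and the intersection graphs of $(-1)$-curves in degrees $6$, $5$, $4$ (hexagon, Petersen graph, Clebsch graph) are triangle-free.
\item So at most two $(-1)$-curves pass through any point, which contradicts $|\Sigma|\geqslant 4$ in all three degrees at once.
\end{itemize}

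Finally, your first statement of the reduction for $d\geqslant 2$ is false in degree $2$: there the point must also avoid the ramification curve of the anticanonical double cover. You immediately restrict to $d\geqslant 3$, which is all you need.
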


\begin{Cor}\label{cor:pointisingeneralpos}
Any $\mathbb{K}$-point on a minimal rational del Pezzo surface is in Sarkisov general position. 
\end{Cor}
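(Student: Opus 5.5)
The plan is to reduce the corollary to Lemma~\ref{lemma:point>4generalpos}. That lemma covers minimal del Pezzo surfaces of degree $d \geqslant 4$, so it suffices to check two things: a minimal rational del Pezzo surface that carries a $\mathbb{K}$-point has degree at least $4$, and the remaining possibilities are covered directly.

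First, let $X$ be a minimal rational del Pezzo surface. By Theorem~\ref{th:SarkisovlinksdP}, $X$ is either $\mathbb{P}^2$ or lies in one of the classes $\mathcal{D}_5$, $\mathcal{D}_6$ or $\mathcal{D}_8$. This is the classical result of Iskovskikh that a minimal rational del Pezzo surface has degree $9$, $8$, $6$ or $5$; degrees $\leqslant 4$ are excluded because such minimal surfaces are not rational. Hence $d \geqslant 5 \geqslant 4$, and Lemma~\ref{lemma:point>4generalpos} applies verbatim to any $\mathbb{K}$-point $P \in X$.

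As a sanity check, in the case $X \simeq \mathbb{P}^2$ one can also argue directly. The blowup of $\mathbb{P}^2$ at a single $\mathbb{K}$-point is the surface $\mathbb{F}_1$. Its anticanonical class is ample, since $-K$ has self-intersection $8$ and has positive degree on the $(-1)$-curve and on the fibres. So $\mathbb{F}_1$ is a del Pezzo surface of degree $8$, and $P$ is in Sarkisov general position.

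The only possible obstacle is justifying that a rational minimal del Pezzo surface has degree at least $5$. If the paper's citation of Theorem~\ref{th:SarkisovlinksdP} is not taken to imply this, I would instead invoke Iskovskikh's rationality criterion: a minimal del Pezzo surface of degree $\leqslant 4$ is never $\mathbb{K}$-rational. Given that, the rest is immediate.
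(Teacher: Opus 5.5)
Your proposal is correct and follows essentially the same route as the paper: Theorem~\ref{th:SarkisovlinksdP} gives degree at least $5$ for a minimal rational del Pezzo surface, and then Lemma~\ref{lemma:point>4generalpos} applies. The paper also cites Theorem~\ref{theorem:Lang-Nishimura} to note that such a $\mathbb{K}$-point exists, which the statement does not logically need; your separate $\mathbb{F}_1$ check for $\mathbb{P}^2$ is likewise redundant but harmless.
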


\begin{proof}
By Theorem~\ref{th:SarkisovlinksdP} the degree of any minimal del Pezzo surface at least~$5.$  By Theorem~\ref{theorem:Lang-Nishimura} any rational del Pezzo surface has a $\mathbb{K}$-point. By Lemma~\ref{lemma:point>4generalpos} this point is in Sarkisov general position. 

\end{proof}

Now we study points in Sarkisov general position on minimal del Pezzo surfaces of degree~$5.$

\begin{Lemma}\label{lemma:dP5minimal quadraticextension}
Let $X$ be a minimal del Pezzo surface of degree~$5$ over a field~$\mathbb{K}.$ Then for any extension~$\mathbb{K} \subset \mathbf{K}$ of degree~$2$ the del Pezzo surface $X_{\mathbf{K}}$ is also minimal.
\end{Lemma}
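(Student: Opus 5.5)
We argue through the Galois image classification already used in the proof of Lemma~\ref{lemma:dp5M}. Let $\mathbb{K} \subset \mathbf{M}$ be the minimal extension over which all $(-1)$-curves on $X_{\overline{\mathbb{K}}}$ are defined. Let $G=\mathrm{Gal}(\mathbf{M}/\mathbb{K})$, and identify $G$ with the image of $\mathfrak{f}_X$ in $\mathfrak{S}_5$. By~\cite[Corollaries~8.3 and 8.5]{Zaitsev}, and since $X$ is minimal, $G$ is one of the groups~\eqref{eq:subgroupS5}.

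The same corollaries characterize minimality purely in terms of the image group. So $X_{\mathbf{K}}$ is minimal exactly when the image of $\mathrm{Gal}(\overline{\mathbb{K}}/\mathbf{K})$ in $\mathfrak{S}_5$ again lies in the list~\eqref{eq:subgroupS5}. Put $H$ for this image, i.e. the image of $\mathfrak{f}_{X_{\mathbf{K}}}$. Then $H = \mathrm{Gal}(\mathbf{M}\mathbf{K}/\mathbf{K}) \cong \mathrm{Gal}(\mathbf{M}/\mathbf{M}\cap\mathbf{K})$, which is a subgroup of $G$ of index $[\mathbf{M}\cap\mathbf{K}:\mathbb{K}] \in \{1,2\}$. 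Moreover, when the index is $2$, $H$ is the kernel of a surjection $G \to \mathbb{Z}/2\mathbb{Z}$, and in particular it is normal in $G$.

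It therefore suffices to list the normal subgroups of index at most $2$ in each group of~\eqref{eq:subgroupS5}:
\begin{itemize}
\item for $\mathfrak{S}_5$ they are $\mathfrak{S}_5$ and $\mathfrak{A}_5$;
\item $\mathfrak{A}_5$ has no subgroup of index $2$, since it is simple;
\item for $\mathfrak{D}_5$ they are $\mathfrak{D}_5$ and $\mathbb{Z}/5\mathbb{Z}$;
\item for $\mathbb{Z}/5\mathbb{Z}\rtimes\mathbb{Z}/4\mathbb{Z}$ they are the group itself and the unique index-$2$ subgroup $\mathbb{Z}/5\mathbb{Z}\rtimes\mathbb{Z}/2\mathbb{Z}\cong\mathfrak{D}_5$;
\item $\mathbb{Z}/5\mathbb{Z}$ has no subgroup of index $2$, having odd order.
\end{itemize}
In every case $H$ again belongs to~\eqref{eq:subgroupS5}, so $X_{\mathbf{K}}$ is minimal.

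The main obstacle is the characterization of minimality via~\cite{Zaitsev}, that is, knowing that the list~\eqref{eq:subgroupS5} gives exactly the images for which there are no Galois-invariant disjoint sets of $(-1)$-curves. This is needed in the "if" direction as well, and the corollaries cited should give it.

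If one wants a self-contained check instead, one can verify directly that each $H$ has no orbit of pairwise disjoint $(-1)$-curves. Such an orbit would consist of at most $4$ of the $10$ curves, corresponding to points or disjoint pairs in the Petersen graph model. For the groups above, an orbit on $\{1,\dots,5\}$ (or on $2$-subsets) containing a $5$-cycle has length divisible by $5$, while $\mathfrak{A}_5$ acts transitively on all $10$ curves.
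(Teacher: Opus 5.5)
Your proof is correct, and it shares the paper's skeleton. The Galois image attached to $X_{\mathbf{K}}$ is $H=\mathrm{Gal}(\mathbf{M}\mathbf{K}/\mathbf{K})$, a subgroup of index at most $2$ in $G=\mathrm{Gal}(\mathbf{M}/\mathbb{K})$, and minimality is decided by this group.

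Where you differ is the criterion for minimality. The paper uses Zaitsev's Lemma~8.4 in both directions: $X$ is minimal if and only if the Galois group contains a subgroup $\mathbb{Z}/5\mathbb{Z}$. Since $[G:H]\leqslant 2$ is prime to $5$, an element of order $5$ in $G$ automatically lies in $H$. This finishes the proof in one line, with no case analysis and no need for the full list.

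You instead work with the list \eqref{eq:subgroupS5} and enumerate its subgroups of index at most $2$. Your enumeration is right, and it also tells you which group $H$ is: $\mathfrak{A}_5$ inside $\mathfrak{S}_5$, $\mathbb{Z}/5\mathbb{Z}$ inside $\mathfrak{D}_5$, and $\mathfrak{D}_5$ inside $\mathbb{Z}/5\mathbb{Z}\rtimes\mathbb{Z}/4\mathbb{Z}$. The cost is that you need the converse ``in the list $\Rightarrow$ minimal'', which you correctly flag.

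Your self-contained check supplies exactly that converse, and it is in fact the paper's criterion. A $5$-cycle acts freely on the ten $2$-subsets, so every orbit of a group containing it has length divisible by $5$. A set of pairwise disjoint $(-1)$-curves corresponds to pairwise \emph{intersecting} $2$-subsets (a star or a triangle), so it has at most $4$ elements. With that in hand the enumeration is superfluous, since the $5$-cycle of $G$ already lies in $H$.

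Two minor wording points. The phrase ``points or disjoint pairs'' should be replaced by the star/triangle description. The separate remark that $\mathfrak{A}_5$ acts transitively is redundant, because $\mathfrak{A}_5$ contains a $5$-cycle.
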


\begin{proof}
Consider a  minimal extension $\mathbb{K} \subset \mathbf{M}$  over which all~$(-1)$-curves on $X_{\overline{\mathbb{K}}}$ are defined.
By minimality of $X$ from~\cite[Lemma 8.4]{Zaitsev} we get that $\mathrm{Gal}(\mathbf{M}/\mathbb{K})$ contains a subgroup isomorphic to~$\mathbb{Z}/5\mathbb{Z}.$ So the Galois group $\mathrm{Gal}(\mathbf{M}\mathbf{K}/\mathbf{K})$ also contains a subgroup isomorphic to~$\mathbb{Z}/5\mathbb{Z}.$ Therefore, again by~\cite[Lemma 8.4]{Zaitsev} we have that~$X_{\mathbf{K}}$ is minimal.

\end{proof}

\begin{Lemma}\label{lemma:existencepointd5p2}
Let $X$ be a minimal del Pezzo surface of degree~$5$ over~$\mathbb{K}.$ Then any point of degree~$2$ on~$X$ is in Sarkisov general position. In particular, for any quadratic extension $\mathbb{K} \subset \mathbf{K}$ there is a $\mathbf{K}$-point in Sarkisov general position on $X_{\mathbf{K}}.$
\end{Lemma}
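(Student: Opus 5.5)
We use the standard criterion for Sarkisov general position: the blowup of $X$ at a point $P$ of degree $2$ is a del Pezzo surface (of degree $3$) if and only if, over $\overline{\mathbb{K}}$, the two points $P_1,P_2$ of $P_{\overline{\mathbb{K}}}$ are distinct and neither lies on a $(-1)$-curve of $X_{\overline{\mathbb{K}}}$. Indeed, blowing up a point on a $(-1)$-curve $E$ produces a $(-2)$-curve, the strict transform of $E$. Conversely, for two points off all lines on a quintic del Pezzo surface, the only remaining obstructions would be a conic through both points that becomes a $(-2)$-curve, or other special configurations. These cannot occur for just two points once neither lies on a line, since $X_{\overline{\mathbb{K}}}$ is $\mathbb{P}^2$ blown up in $4$ general points, and two further points off the lines are in general position: no three of the six points are collinear and not all six lie on a conic. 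So the proof reduces to showing that neither $P_1$ nor $P_2$ lies on a $(-1)$-curve.

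Suppose $P_1$ lies on a $(-1)$-curve $E$. Then $P_2=\sigma(P_1)$ lies on $\sigma(E)$, where $\sigma$ is the nontrivial element of $\mathrm{Gal}(\mathbf{K}/\mathbb{K})$ for the residue field $\mathbf{K}$ of $P$. We pass to $X_{\mathbf{K}}$, over which $P_1$ and $P_2$ are $\mathbf{K}$-points. By Lemma~\ref{lemma:dP5minimal quadraticextension}, $X_{\mathbf{K}}$ is still minimal. By Lemma~\ref{lemma:point>4generalpos} applied to the minimal del Pezzo surface $X_{\mathbf{K}}$ of degree $5\geqslant 4$, the $\mathbf{K}$-point $P_1$ is in Sarkisov general position on $X_{\mathbf{K}}$. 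Hence $P_1$ lies on no $(-1)$-curve of $X_{\overline{\mathbb{K}}}$, a contradiction; the same argument applies to $P_2$. Distinctness of $P_1,P_2$ holds by definition of a point of degree $2$.

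The main obstacle is the reduction in the first paragraph: justifying that lying on a line is the only obstruction for two points. If one wants to avoid a direct case check, an alternative route is to argue via the Galois action. A conic through $P_1,P_2$ that would become a $(-2)$-curve is a curve $D$ with $D^2=0$ and $-K_X\cdot D=2$ in a conic-bundle class. Such a class passing through both points is unique, because two fibres of a conic bundle meet trivially. So this class would be $\mathrm{Gal}$-invariant over $\mathbb{K}$ and give a conic bundle structure on $X$, contradicting Picard rank one.

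For the ``in particular'' part, Lemma~\ref{lemma:dP5minimal quadraticextension} shows $X_{\mathbf{K}}$ is minimal of degree $5$. By Theorem~\ref{th:dp5point} it has a $\mathbf{K}$-point, and by Lemma~\ref{lemma:point>4generalpos} that point is in Sarkisov general position on $X_{\mathbf{K}}$.
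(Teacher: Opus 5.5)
Your handling of the lines is exactly the paper's first step: you pass to $X_{\mathbf{K}}$, which stays minimal by Lemma~\ref{lemma:dP5minimal quadraticextension}, and apply Lemma~\ref{lemma:point>4generalpos} to the $\mathbf{K}$-points $P_1,P_2$. Your ``alternative route'' is the paper's second step in different clothing. The paper blows up $P_1$ to get a quartic del Pezzo surface $Y$, on which the only obstruction for $\pi^{-1}(P_2)$ is a $(-1)$-curve $E'$ through it. Such an $E'$ not coming from $X$ has $E'\cdot E=1$, so $\pi_*(E')$ is a conic through $P_1$ and $P_2$; this conic is defined over $\mathbb{K}$ and contradicts $\rho(X)=1$.

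The problem is your first paragraph. Its key claim is false, so the Galois argument is not an optional alternative to a ``direct case check''; it is the heart of the proof. Write $X_{\overline{\mathbb{K}}}$ as the blowup of $\mathbb{P}^2$ at $p_1,\dots,p_4$. Two points off the ten lines need not be in general position. Through any point $q$ off the lines pass five smooth conics: the fibres through $q$ of the five conic bundles, with classes $L-E_i$ and $2L-\sum E_j$. Taking $q'$ on one of them, off the lines, either makes $p_i,q,q'$ collinear or puts all six points on a conic. The strict transform of that conic is then a $(-2)$-curve on the cubic surface. This obstruction is excluded only by minimality of $X$.

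Your uniqueness step is also incomplete as written. ``Two fibres of a conic bundle meet trivially'' only handles conics in the same class. You also need that conics in different classes on a quintic del Pezzo surface have intersection number $1$, so no two distinct conics pass through both $P_1$ and $P_2$. Only then is the conic through them stable under every $\tau\in\mathrm{Gal}(\overline{\mathbb{K}}/\mathbb{K})$ (each such $\tau$ permutes $\{P_1,P_2\}$), which gives an invariant class not proportional to $K_X$.

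Finally, ``or other special configurations'' should be replaced by an argument that lines through one $P_i$ and conics through both are the only obstructions. Let $C$ be a $(-2)$-curve on the blowup, $F_i$ the exceptional curve over $P_i$, and $m_i=C\cdot F_i$. Its image $D$ satisfies $D^2=m_1^2+m_2^2-2$ and $-K_X\cdot D=m_1+m_2>0$, since $-K_X$ is ample. The Hodge index inequality $(m_1+m_2)^2\geqslant 5D^2$ then leaves only two cases: $\{m_1,m_2\}=\{0,1\}$, a line through some $P_i$; or $m_1=m_2=1$, a conic through both. Alternatively, blow up $P_1$ first, as the paper does.
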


\begin{proof}
Let $P \in X$ be a point of degree~$2$ with the splitting field~$\mathbf{K}.$ Let 
$$
(P)_{\mathbf{K}}=\{P_1, P_2\}.
$$
\noindent  By Lemma~\ref{lemma:dP5minimal quadraticextension} we get that $X_{\mathbf{K}}$ is a minimal del Pezzo surface. So by Corollary~\ref{cor:pointisingeneralpos} the point~$P_1$ is in Sarkisov general position on $X_{\mathbf{K}}.$ This means that $P_1$ does not lie on $(-1)$-curves on~$X_{\overline{\mathbb{K}}}.$ Therefore, neither does $P_2.$  

Let $\pi \colon Y \to X_{\mathbf{K}}$ be a blowup of the point~$P_1.$ Let $E$ be the exceptional divisor of~$\pi.$ It is enough to prove that the point $P'=\pi^{-1}(P_2)$ does not lie on $(-1)$-curves on~$Y_{\overline{\mathbb{K}}}.$ Since $P_2$ does not lie on $(-1)$-curves on $X_{\overline{\mathbb{K}}},$ it is enough to prove that $P'$ does not lie on the $(-1)$-curves which are not a proper transform of $(-1)$-curves under the morphism~$\pi.$ Assume that $P'$ lies on such $(-1)$-curve $E'.$ As $E \cdot E'=1,$ we get that~$\pi_*(E')$ is an irreducible curve with self-intersection $0$ defined over $\mathbb{K}.$ But this is impossible, since  $\pi_*(E')$ is not proportional to $K_X.$

\end{proof}

Now we study points in Sarkisov general position on minimal del Pezzo surfaces of degree~$6.$

\begin{Lemma}\label{lemma:dP6pointsinintersection}
Let $P$ be a point of degree~$2$ on a  minimal  del Pezzo surface $X$ of degree~$6$ over~$\mathbb{K}.$ Then $P$ does not lie on intersections of $(-1)$-curves on $X_{\overline{\mathbb{K}}}.$

\end{Lemma}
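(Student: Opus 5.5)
The proof is a Galois-orbit count on the hexagon of $(-1)$-curves. On $X_{\overline{\mathbb{K}}}$ the six $(-1)$-curves $E_1,F_3,E_2,F_1,E_3,F_2$ form a cycle, since $E_i\cdot F_j=1$ exactly when $i\neq j$. Consecutive curves meet in a single point, so there are exactly six intersection points. Write $p_{ij}=E_i\cap F_j$ for $i\neq j$.

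The first step is to suppose that $P_{\overline{\mathbb{K}}}=\{P_1,P_2\}$ and that $P_1$ is one of these six points, say $P_1=p_{ij}$. The Galois group $\Gamma=\mathrm{Gal}(\mathbf{F}/\mathbb{K})$ permutes the six intersection points, and this action is compatible with its action on the hexagon. Hence the $\Gamma$-orbit of $P_1$, which is $\{P_1,P_2\}$, has length $2$. So the stabilizer of the vertex $p_{ij}$, and therefore of the edge $\{E_i,F_j\}$ of the dual graph, has index $2$ in $\Gamma$.

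The second step is to use minimality. Since $\rho(X)=1$, the image of $\Gamma$ in $\mathrm{Aut}(\text{hexagon})\cong\mathfrak{D}_6$ acts transitively on the six $(-1)$-curves. It must also contain an element that swaps the triples $\{E_1,E_2,E_3\}$ and $\{F_1,F_2,F_3\}$, and an element cyclically permuting the three pairs $\{E_i,F_i\}$. This matches the list $\mathbb{Z}/6\mathbb{Z}$, $\mathfrak{S}_3$, $\mathfrak{D}_6$ recalled from~\cite[\S 4]{JZim}. If the edge $\{E_i,F_j\}$ has an orbit of length $2$, then the union of the curves in that orbit is a $\Gamma$-invariant set of at most four curves, and so is a proper invariant subset of the six curves. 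This contradicts transitivity.

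A cleaner way to finish is to compare with the orbit of the curve $E_i$. Its stabilizer has index $6$. The stabilizer of the edge $\{E_i,F_j\}$ either fixes $E_i$ or swaps $E_i$ with $F_j$, so it contains $\mathrm{Stab}(E_i)\cap\mathrm{Stab}(\{E_i,F_j\})$ with index at most $2$. From index $2$ in $\Gamma$ for the edge we get
\[
[\Gamma:\mathrm{Stab}(E_i)\cap\mathrm{Stab}(\{E_i,F_j\})]\le 4 ,
\]
which is impossible because this subgroup is contained in $\mathrm{Stab}(E_i)$, whose index is $6$. So no intersection point of $(-1)$-curves is the image of a point of degree $2$.

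The step that needs the most care is checking that a transitive action on the six curves forces every edge orbit to have length at least $3$. In $\mathfrak{D}_6$ acting on the hexagon, an edge orbit of length $2$ is only possible for a subgroup of order at most $2\cdot|\mathrm{Stab}(\text{edge})|\le 4$, and such a subgroup cannot be transitive on six vertices. This is exactly what the index computation above verifies.
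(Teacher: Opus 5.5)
Your proof is correct, but it takes a different route from the paper's.

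\textbf{The paper's route.} The paper splits into cases according to whether the splitting field $\widetilde{\mathbf{K}}$ of $P$ coincides with the field $\mathbf{K}$ of the Severi--Brauer data. If $\widetilde{\mathbf{K}}\neq\mathbf{K}$, it uses that $X_{\widetilde{\mathbf{K}}}$ stays minimal and applies the general-position statement for rational points (Corollary~\ref{cor:pointisingeneralpos}) to $P_1$. If $\widetilde{\mathbf{K}}=\mathbf{K}$, it contracts $E_1,E_2,E_3$ by $\eta\colon X_{\mathbf{K}}\to\mathbb{P}^2$. Then $\eta(P_1)$ would be one of $Q_1,Q_2,Q_3$ and hence fixed by $\mathrm{Gal}(\mathbf{F}/\mathbf{K})$, contradicting transitivity on the $Q_k$.

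\textbf{Your route.} You replace this by one uniform orbit count on the hexagon, using only that minimality forces transitivity on the six curves. This has three advantages:
\begin{enumerate}
\item It avoids the case split and the external general-position lemma.
\item It does not need the claim that minimality survives the base change to $\widetilde{\mathbf{K}}$, which the paper states without argument.
\item Your index computation gives more: a point at an intersection of $(-1)$-curves has degree at least $3$ (in fact $3$ or $6$).
\end{enumerate}
Your invariant-subset idea is essentially the one the paper itself uses for the divisor $\Sigma$ in the proof of Lemma~\ref{lemma:existence2pointd6}. What the paper's route buys, in the case $\widetilde{\mathbf{K}}\neq\mathbf{K}$, is the stronger conclusion that $P$ avoids the $(-1)$-curves altogether.

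\textbf{Details to add in a write-up.} Two routine points deserve a line each.
\begin{enumerate}
\item No point lies on three $(-1)$-curves. Hence the six intersection points are distinct, $\mathbf{F}$-rational, and in $\Gamma$-equivariant bijection with the edges.
\item Transitivity on the six curves needs a short justification. The square of a pair-cycling element is a nontrivial rotation preserving both triples. Together with a triple-swapping element, it moves any curve to any other.
\end{enumerate}
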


\begin{proof}
Assume that $P$  lies on intersections of $(-1)$-curves on $X_{\overline{\mathbb{K}}}.$ Let $(\mathbf{F}, \mathbf{K}, V, \mathbf{L}, Y)$ be Severi--Brauer data of $X$ (see Definition~\ref{def:SBdata}).  Consider a point~\mbox{$P \in X$}  of degree~$2$ with the splitting field~$\widetilde{\mathbf{K}}.$ Let  
$$
(P)_{\widetilde{\mathbf{K}}}=\{P_1, P_2\}.
$$
\noindent Assume that~$\mathbf{K} \neq \widetilde{\mathbf{K}}.$ Then  $X_{\widetilde{\mathbf{K}}}$ is also a minimal del Pezzo surface of degree~$6.$ By Corollary~\ref{cor:pointisingeneralpos} we get that every point among $P_1$ and $P_2$ is in Sarkisov general position and, therefore,  $P$ does not lie on intersections of $(-1)$-curves on~$X_{\overline{\mathbb{K}}}.$

Now assume that $\mathbf{K}=\widetilde{\mathbf{K}}.$ Then we get $V=\mathbb{P}^2.$  Consider the contraction 
$$
\eta \colon X_{\mathbf{K}} \to \mathbb{P}^2.
$$
\noindent Let $Q_1,$ $Q_2$ and $Q_3$ be three points on $\mathbb{P}^2_{\mathbf{F}}$ which are blown up by~$\eta_{\mathbf{F}}.$ By definition of the field~$\mathbf{K}$ we get that $\mathrm{Gal}(\mathbf{F}/\mathbf{K})$ acts transitively on the set of points~$\{Q_1, Q_2, Q_3\}.$ However, since $P$  lies on intersections of $(-1)$-curves, we get 
$$
\eta(P_1), \eta(P_2) \in \{Q_1, Q_2, Q_3\}.
$$
\noindent  This is impossible, since $\mathrm{Gal}(\mathbf{F}/\mathbf{K})$ acts trivially on the set~$\{P_1, P_2\}.$ This contradiction concludes the proof.

\end{proof}

\begin{Lemma}\label{lemma:existence2pointd6}
Let $X$ be a minimal  del Pezzo surface of degree~$6$ over~$\mathbb{K}.$ Then any point of degree~$2$ on~$X$ is in Sarkisov general position. In particular, for any quadratic extension $\mathbb{K} \subset \mathbf{K}$ there is a $\mathbf{K}$-point in Sarkisov general position on $X_{\mathbf{K}}.$
\end{Lemma}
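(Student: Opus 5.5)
Let $P\in X$ be a point of degree~$2$ with splitting field $\widetilde{\mathbf{K}}$ and $(P)_{\widetilde{\mathbf{K}}}=\{P_1,P_2\}$. Since $X$ has degree~$6$, its blowup at~$P$ has degree~$4$, and it is a del Pezzo surface exactly when $P_1,P_2$ lie on no $(-1)$-curve of $X_{\overline{\mathbb{K}}}$ and the pair $P_1,P_2$ is not infinitely near in a bad way, i.e.\ after blowing up $P_1$ the point $\pi^{-1}(P_2)$ lies on no $(-1)$-curve. So I would proceed in two steps, modelled on Lemma~\ref{lemma:existencepointd5p2}.

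\emph{Step 1: $P$ lies on no $(-1)$-curve.} Suppose $P_1$ lies on some $(-1)$-curve $E$ of $X_{\overline{\mathbb{K}}}$. By Lemma~\ref{lemma:dP6pointsinintersection} it lies on no other $(-1)$-curve, so the stabilizer of $P_1$ in the Galois group stabilizes $E$. That stabilizer has index~$2$, so the Galois orbit of $E$ has length at most~$2$. But by the description in Section~\ref{section:dP6}, minimality of $X$ forces the Galois group to act on the hexagon of $(-1)$-curves with orbit of length~$6$: it swaps the triples and acts transitively on the pairs $\{E_i,F_i\}$, since $\mathbb{K}\subset\mathbf{L}$ has degree $3$ or~$6$. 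This is a contradiction. The same argument applies to~$P_2$.

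\emph{Step 2: after blowing up $P_1$, the point $P'=\pi^{-1}(P_2)$ lies on no new $(-1)$-curve.} Let $\pi\colon Z\to X_{\widetilde{\mathbf{K}}}$ be the blowup of~$P_1$ with exceptional curve~$E$. A $(-1)$-curve $E'$ on $Z_{\overline{\mathbb{K}}}$ that is not a strict transform of a curve on $X$ satisfies $E'\cdot E=1$. Then $C=\pi_*(E')$ is a curve through $P_1$ and $P_2$ with $C^2=0$ and $-K_X\cdot C=2$, i.e.\ a fibre of one of the three conic bundle structures on $X_{\overline{\mathbb{K}}}$. The fibre through $P_1$ and $P_2$ in a given conic bundle is unique, so the Galois conjugate of $C$ is again a fibre through $P_2$ and $P_1$ in the conjugate conic bundle class. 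Hence the class of $C$ is stabilized by the whole Galois group, or else it is swapped with another class $C'$ and $C$, $C'$ are two fibres of different pencils both passing through $P_1$ and $P_2$. In the second case $C\cdot C'\geq 2$, while two fibre classes from different pencils meet with multiplicity~$1$ (for example $(E_1+F_2)\cdot(E_2+F_1)=1$), which is impossible. In the first case the class of $C$ is a Galois-invariant class with $C^2=0$. It is not proportional to $K_X$, which contradicts $\mathrm{Pic}(X)=\mathbb{Z}$. Thus $P$ is in Sarkisov general position.

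\emph{Main obstacle and the ``in particular'' part.} Step~2 is the main difficulty: I have to identify the curves with $C^2=0$, $-K\cdot C=2$ as conic bundle fibres and carefully rule out the swapped-pencils case. For the ``in particular'' statement, I would need to show that a point of degree~$2$ with a prescribed splitting field $\mathbf{K}$ exists. If $X$ has a $\mathbb{K}$-point, then $X$ is rational by Theorem~\ref{theorem:rationalityd>5}, and $X(\mathbf{K})$ is Zariski dense. Points of $X(\mathbf{K})\setminus X(\mathbb{K})$ then exist outside the proper closed subset of $\mathbb{K}$-points (using density of $\mathbb{K}$-points and, if needed, that $\mathbf{K}$-points of the rational variety are dense and not all defined over~$\mathbb{K}$), and each such point gives a degree~$2$ point with splitting field $\mathbf{K}$. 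If $X$ has no $\mathbb{K}$-point, I would handle the cases $X_{\mathbf{K}}$ minimal or not separately, using $V$ and the Severi--Brauer data to produce $\mathbf{K}$-points in the same way.
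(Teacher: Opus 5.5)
Your proof of the main assertion is correct and follows the paper's route. First you show that $P_1,P_2$ lie on no $(-1)$-curve. Then you blow up $P_1$ and exclude the three new $(-1)$-curves, which amounts to excluding an irreducible member of one of the pencils $|D_i|$ passing through both $P_1$ and $P_2$. Your Step~1 (an index-$2$ stabilizer cannot fix a curve whose Galois orbit has length $6$) is the same observation as the paper's remark that no $(-1)$-curve is defined over $\mathbf{N}=\mathbf{K}\widetilde{\mathbf{K}}$. In Step~2 you actually justify what the paper only asserts, namely that $\pi_*(E')$ is Galois-invariant: two such curves in different pencils would share $P_1$ and $P_2$, contradicting $D_i\cdot D_j=1$. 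Your ``first case'' cannot even occur, since the Galois group permutes the three pencils transitively. So for what the paper actually proves, your argument is complete.

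The paper gives no argument for the ``in particular'' clause, and your sketch of it does not hold up as written.

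\emph{Rational $X$.} The remark about avoiding ``the proper closed subset of $\mathbb{K}$-points'' is not right: for infinite $\mathbb{K}$ those points are dense. All you need is one $\mathbf{K}$-point of $X$ not defined over $\mathbb{K}$. For infinite $\mathbb{K}$, take a $\mathbb{K}$-birational map $X\DashedArrow[->,densely dashed] \mathbb{P}^2$ (Theorem~\ref{theorem:rationalityd>5}) and a $\mathbb{K}$-line meeting the open set on which the map is an isomorphism; that line has such points in the open set. For finite $\mathbb{K}$ you need a separate argument, e.g.\ a point count.

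\emph{Non-rational $X$.} Your plan to produce $\mathbf{K}$-points from $V$ and the Severi--Brauer data cannot work. If $\mathrm{Ind}(X)\in\{3,6\}$ (cf.\ Lemma~\ref{lemma:ind6}), every point of $X$ has degree divisible by $3$. Then $X_{\mathbf{K}}$ has no $\mathbf{K}$-point for any quadratic $\mathbf{K}$, and the clause simply fails. It must be restricted to rational $X$, which is the only case where it is used (Lemma~\ref{lemma:linkd6d6p2} and assertion~\ref{lemma:links(6,8)} of Lemma~\ref{lemma:d>5n<2}).
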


\begin{proof}
Let $(\mathbf{F}, \mathbf{K}, V, \mathbf{L}, Y)$ be Severi--Brauer data of $X.$  Consider a point~\mbox{$P \in X$}  of degree~$2$ with the splitting field~$\widetilde{\mathbf{K}}.$ Let  
$$
(P)_{\widetilde{\mathbf{K}}}=\{P_1, P_2\}.
$$

\noindent Consider the field $\mathbf{N}=\mathbf{K}\widetilde{\mathbf{K}}.$ By definition of Severi--Brauer data any $(-1)$-curve on $X_{\overline{\mathbb{K}}}$ is not defined over~$\mathbf{N}.$ This means that either $P_1$ lies on the intersection of $(-1)$-curves on $X_{\overline{\mathbb{K}}},$ or out of  $(-1)$-curves on $X_{\overline{\mathbb{K}}}.$ If $P$ lies on the intersection of $(-1)$-curves on $X_{\overline{\mathbb{K}}},$ then the sum $\Sigma$ of $(-1)$-curves  which intersect with $P$ is defined over $\mathbb{K}.$ This means that $\Sigma$ is proportional to $-K_X,$ which is impossible. So we get that neither $P_1,$ nor $P_2$ lies on $(-1)$-curves on $X_{\overline{\mathbb{K}}}.$

Let $\pi \colon Y \to X_{\mathbf{N}}$ be a blowup of the point~$P_1.$ Let $E$ be the exceptional divisor of~$\pi.$ It is enough to prove that the point $P'=\pi^{-1}(P_2)$ does not lie on $(-1)$-curves on~$Y_{\overline{\mathbb{K}}}.$ Since $P_2$ does not lie on $(-1)$-curves on $X_{\overline{\mathbb{K}}},$ it is enough to prove that $P'$ does not lie on the $(-1)$-curves which are not a proper transform of $(-1)$-curves under the morphism~$\pi.$ Assume that~$P'$ lies on such $(-1)$-curve $E'.$ As $E \cdot E'=1,$ we get that~$\pi_*(E')$ is an irreducible curve with self-intersection $0$ defined over $\mathbb{K}.$ So the contraction of $\pi_*(E')$ gives a conic bundle structure on $X_{\mathbf{N}}.$ But this is impossible since conic bundle structures on $X_{\overline{\mathbb{K}}}$ are defined over fields which contain the field~$\mathbf{L}.$

\end{proof}

\begin{Lemma}\label{lemma:existence3pointd6}
Let $X$ be a minimal del Pezzo surface of degree~$6$ over~$\mathbb{K}.$ Let 
$$
(\mathbf{F}, \mathbf{K}, V, \mathbf{L}, Y)
$$
 be Severi--Brauer data of $X.$  Consider a point~\mbox{$P \in X$}  of degree~$3$ with the splitting field~$\widetilde{\mathbf{L}}.$ Assume that $\mathbf{L} \neq \widetilde{\mathbf{L}}.$ Then $P$ is in Sarkisov general position.

\end{Lemma}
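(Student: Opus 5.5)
We follow the pattern of Lemmas~\ref{lemma:dP6pointsinintersection} and~\ref{lemma:existence2pointd6}. Write
$$
(P)_{\overline{\mathbb{K}}}=\{P_1,P_2,P_3\},
$$
where the $P_i$ are defined over $\widetilde{\mathbf{L}}$ and $\mathrm{Gal}(\overline{\mathbb{K}}/\mathbb{K})$ permutes them transitively. The blowup of $X$ at $P$ is a surface of degree $3$, and it is del Pezzo if and only if no $P_i$ lies on a $(-1)$-curve of $X_{\overline{\mathbb{K}}}$, and after blowing up some of the $P_i$ no remaining point lies on a new $(-1)$-curve. We check these conditions in stages.

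\textbf{Step 1: no $P_i$ lies on a $(-1)$-curve.} Suppose some $P_i$, hence every $P_i$ by transitivity, lies on the hexagon of $(-1)$-curves.

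If $P_i$ lies on the intersection of two $(-1)$-curves, these two curves are adjacent, of the form $E_j,F_k$ with $j\neq k$. The three intersection points occupied by the $P_i$ form a Galois orbit of vertices of the hexagon. The only orbits of length $3$ are the alternating triples, such as $E_1\cap F_2$, $E_2\cap F_3$, $E_3\cap F_1$. The stabilizer of such a vertex fixes the triple $\{E_1,E_2,E_3\}$, so it is contained in $\mathrm{Gal}(\overline{\mathbb{K}}/\mathbf{K})$. A degree-$3$ subfield cannot contain the quadratic field $\mathbf{K}$, so this is impossible.

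Otherwise each $P_i$ lies on exactly one $(-1)$-curve. Since $P_i$ is defined over $\widetilde{\mathbf{L}}$, that curve is also defined over $\widetilde{\mathbf{L}}$. But $\widetilde{\mathbf{L}}$ has degree $3$ over $\mathbb{K}$, so it does not contain $\mathbf{K}$, and therefore the triples $\{E_1,E_2,E_3\}$ and $\{F_1,F_2,F_3\}$ are still swapped over $\widetilde{\mathbf{L}}$. The Galois orbit of a single $(-1)$-curve under a group containing an element that swaps the two triples has even length (either $2$ or $6$), so no $(-1)$-curve is fixed by $\mathrm{Gal}(\overline{\mathbb{K}}/\widetilde{\mathbf{L}})$. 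This is a contradiction.

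\textbf{Step 2: new $(-1)$-curves.} Let $\pi\colon Z\to X_{\overline{\mathbb{K}}}$ be the blowup of $P_1$ (or of $P_1,P_2$). The new $(-1)$-curves passing through the image of $P_2$ (or $P_3$) come in two kinds.

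The first kind is the proper transform of a curve $C$ with $C^2=0$ and $-K_X\cdot C=2$ passing through $P_1$ and $P_2$; such $C$ is a fibre of one of the three conic bundle structures on $X_{\overline{\mathbb{K}}}$. The Galois group permutes the $P_i$, and $\mathrm{Gal}(\overline{\mathbb{K}}/\widetilde{\mathbf{L}})$ fixes each of them. A fibre through $P_1,P_2$ then forces $P_1,P_2$ (and by symmetry all three points) onto a fibre of a conic bundle class $D$ that is Galois-stable under $\mathrm{Gal}(\overline{\mathbb{K}}/\widetilde{\mathbf{L}})$ up to the action on the pairs $\{E_i,F_i\}$. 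The three conic bundle classes correspond exactly to the pairs $\{E_i,F_i\}$, and the field $\mathbf{L}$ is the minimal Galois field over which all three pairs are individually defined. So we must show that the configuration forces $D$, or the whole triple of classes, to be fixed by $\mathrm{Gal}(\overline{\mathbb{K}}/\widetilde{\mathbf{L}})$, which would imply $\mathbf{L}=\widetilde{\mathbf{L}}$ (up to the Galois closure; when $\mathbf{L}$ has degree $6$ one compares the degree-$3$ subfields). This contradicts the hypothesis.

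The second kind is the proper transform of a curve through all three $P_i$ with class $-K_X-(\text{a }(-1)\text{-curve sum})$, or of a twisted cubic in a class such as $H$ pulled back from $\mathbb{P}^2$ via $\eta$ (self-intersection $1$, anticanonical degree $3$). In the first subcase we use the intersection argument from Step~1. In the second, $\pi_*$ of the new curve is a curve of class $L$ (or $L'$), the pullback of a line class from $S$ or $S^{\mathrm{opp}}$, through $P_1,P_2,P_3$. The set of $P_i$ is Galois-stable, so the class must be Galois-invariant up to the swap $L\leftrightarrow L'$. The sum of the two such lines is then a $\mathbb{K}$-curve through $P$ in $|-K_X|$ minus something, and an intersection-number argument rules this out.

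\textbf{Main obstacle.} The delicate point is the fibre case in Step~2: translating ``$P_1,P_2$ lie on one fibre of the $i$-th conic bundle'' into a statement about the Galois fields. We would argue that $\mathrm{Gal}(\overline{\mathbb{K}}/\mathbb{K})$ then permutes the three pairs $\{P_a,P_b\}$ compatibly with the three conic bundle classes. This identifies the action on $\{P_1,P_2,P_3\}$ with the action on the pairs $\{E_i,F_i\}$, so the stabilizer of $P_1$ equals the stabilizer of $\{E_1,F_1\}$ and hence $\widetilde{\mathbf{L}}=\mathbf{L}_1$. This is where the hypothesis $\mathbf{L}\neq\widetilde{\mathbf{L}}$ must enter; reconciling $\mathbf{L}$ with $\mathbf{L}_1$ uses the Remark after the Severi--Brauer construction.
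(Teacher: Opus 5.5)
\textbf{Verdict: there is a genuine gap.} The argument in Step~1(a) is false, and the fibre case of Step~2 is only an outline.

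\textbf{Overall approach.} Your reduction matches the paper's in content. The paper works on $V\simeq\mathbb{P}^2$ via $\eta$ and checks two things: no three of $\eta(P_1),\eta(P_2),\eta(P_3),Q_1,Q_2,Q_3$ are collinear, and not all six lie on a conic. This is essentially your list: some $P_a$ on a $(-1)$-curve; two $P_a$ on a fibre of $|H-E_k|$; all three on a curve of class $H$ or $H'=2H-E_1-E_2-E_3$. Your treatment of the last case, swapping $H\leftrightarrow H'$, is the paper's line--conic argument.

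\textbf{Step~1(a).} You are right that length-$3$ orbits of vertices are alternating triples. The stabilizer claim is wrong, though. An element fixing the vertex $E_1\cap F_2$ may interchange $E_1$ and $F_2$, and such an element swaps the two triples. So the stabilizer need not lie in $\mathrm{Gal}(\overline{\mathbb{K}}/\mathbf{K})$.

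A concrete case: let the Galois group act on the hexagon through the $\mathfrak{S}_3$ generated by $E_i\mapsto E_{i+1}$, $F_i\mapsto F_{i+1}$ and by $E_1\leftrightarrow F_2$, $E_2\leftrightarrow F_1$, $E_3\leftrightarrow F_3$. This action gives Picard rank one. Then $\{E_1\cap F_2,E_2\cap F_3,E_3\cap F_1\}$ is a Galois orbit, i.e.\ a point of degree $3$ lying on $(-1)$-curves. Its residue field has degree $3$ and does not contain $\mathbf{K}$, so your argument yields no contradiction.

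This case can only be excluded by the hypothesis, which your Step~1 never uses. The fix: $E_i\cap F_j$ is the singular point of the degenerate fibre $E_i+F_j\in|H-E_k|$, where $\{i,j,k\}=\{1,2,3\}$. So $P_a\mapsto k$ is a Galois-equivariant bijection onto the pairs $\{E_k,F_k\}$, and the splitting field of $P$ equals $\mathbf{L}$.

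The same example shows that $\widetilde{\mathbf{L}}$ must be read as the Galois closure, of degree $3$ or $6$. Your repeated ``$\widetilde{\mathbf{L}}$ has degree $3$'' is therefore wrong. In Step~1(b), use the residue field $\mathbb{K}(P_1)$ instead; with that change the argument there is correct.

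\textbf{Step~2, fibre case.} You flag this yourself as the main obstacle. Your intermediate claim that ``by symmetry all three points'' lie on one fibre of a Galois-stable class points the wrong way. The following argument is what is needed.
\begin{itemize}
\item By Step~1, the fibres through the $P_a$ are smooth.
\item Fibres of different conic bundles meet once, since $(H-E_k)\cdot(H-E_l)=1$. So two distinct points $P_a,P_b$ share a fibre of at most one bundle.
\item The Galois group acts transitively on the three pairs $\{P_a,P_b\}$. So if one pair lies on a common fibre, every pair does.
\item If two pairs used the same bundle, all three points would lie on one fibre. That conic bundle would then be defined over $\mathbb{K}$, contradicting $\rho(X)=1$.
\end{itemize}
Hence the pairs $\{P_a,P_b\}$ correspond bijectively and equivariantly to the three conic bundles, i.e.\ to the pairs $\{E_k,F_k\}$. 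Again the splitting field of $P$ is $\mathbf{L}$, contradicting the hypothesis.

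\textbf{Step~2, $H$-case.} Add that the curve $C\in|H|$ through $P_1,P_2,P_3$ and its conjugate $\iota(C)\in|H'|$ are irreducible and distinct once the fibre case is excluded. Then $3\leqslant C\cdot\iota(C)=H\cdot H'=2$ gives the contradiction.

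Your ``first subcase'' with class $-K_X-(\ldots)$ is unnecessary. On the blowup at $P_1,P_2$, the only new $(-1)$-curves are:
\begin{itemize}
\item the exceptional curves;
\item the fibres through $P_1$ or $P_2$;
\item the curves of class $H$ and $H'$ through $P_1,P_2$.
\end{itemize}
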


\begin{proof}
 Let  $(P)_{\widetilde{\mathbf{L}}}=\{P_1, P_2, P_3\}.$ Consider $\eta \colon X_{\mathbf{K}} \to V.$ Denote by $Q$ the point which is blown up by~$\eta.$ Let
$$
(Q)_{\overline{\mathbb{K}}}=\{Q_1, Q_2, Q_3\}.
$$
\noindent Denote by $E_1,$ $E_2,$ $E_3$ three exceptional curves of $\eta_{\overline{\mathbb{K}}}$ and by $F_1,$ $F_2,$ $F_3$ the other $(-1)$-curves on $X_{\overline{\mathbb{K}}}.$ Let us prove that there are no lines passing through some three points among 
\begin{equation}\label{eq:3+3points}
\eta(P_1), \eta(P_2), \eta(P_3), Q_1, Q_2, Q_3
\end{equation}
\noindent on $V_{\overline{\mathbb{K}}}.$ Arguing by contradiction, suppose that there is such line. By the construction, we see that the points  $Q_1,$ $Q_2$ and $Q_3$ do not lie on a line. Assume that 
$$
\eta(P_i), \, \eta(P_j) \; \text{and} \; Q_k \;\;  \text{for} \; i,j,k \in \{1,2,3\} \; \text{and} \;  i \neq j
$$
\noindent lie on a line $L.$ Then $L$ is defined over $\widetilde{\mathbf{L}}$ on $V_{\widetilde{\mathbf{L}}} \simeq \mathbb{P}^2$ by definition of $P.$ However, since the point $Q_k$ is not defined over~$\widetilde{\mathbf{L}}$ we get that $Q$ lies on $L,$ which is impossible.

Now assume that $$
\eta(P_i), \, Q_j \; \text{and} \; Q_k \;\;  \text{for} \; i,j,k \in \{1,2,3\} \; \text{and} \;  j \neq k
$$
\noindent lie on a line $L.$ Consider  $\mathrm{Gal}(\widetilde{\mathbf{L}}\mathbf{L}/\widetilde{\mathbf{L}})$-action of $L.$ Let $\tau$ be a non-trivial element in the Galois group $\mathrm{Gal}(\widetilde{\mathbf{L}}\mathbf{L}/\widetilde{\mathbf{L}}).$ Then on the one hand we have $L \cdot \tau{L}=1,$ but on the other hand $L \cdot \tau{L} \supset \{\eta(P_i), Q_l\},$ where $Q_l$ is either  $Q_j,$ or $Q_k.$ This means that $Q$ lies on $L,$ which is impossible.

Now assume that $\eta(P)$ lies on a line~$L.$  Consider $\eta_*^{-1}(L) \subset X_{\mathbf{K}}.$ By definition of~$\eta$ we get 
$$
\eta_*^{-1}(L) \cdot F_i =1; \; \eta_*^{-1}(L) \cdot E_i =0 \; \text{for} \; i=1,2,3 \;\; \text{and} \;\; (\eta_*^{-1}(L))^2=1.
$$
\noindent   Consider a non-trivial element~\mbox{$\iota \in \mathrm{Gal}(\mathbf{K}/\mathbb{K}).$} So we obtain 
$$
\iota(\eta_*^{-1}(L)) \cdot E_i=1 \; \text{for} \; i=1,2,3.
$$
\noindent This means that $\eta_*(\iota(\eta_*^{-1}(L)))$ passes through $Q,$ and, as a result, 
$$
\eta_*(\iota(\eta_*^{-1}(L)))^2=4,
$$
which means that $\eta_*(\iota(\eta_*^{-1}(L)))$ is a conic passing through $6$ points~\eqref{eq:3+3points}. Moreover, we get
$$
P \in L \cap \eta_*(\iota(\eta_*^{-1}(L))) 
$$
and, thus, $L \cdot \eta_*(\iota(\eta_*^{-1}(L))) \geqslant 3,$ which contradicts with the fact that  $\eta_*(\iota(\eta_*^{-1}(L)))$ is a conic.

 Similarly, we get the contradiction with the assumption that all points in~\eqref{eq:3+3points} lies on a conic $\mathcal{Q}.$ Indeed, using similar arguments as before we get that 
$$
\eta_*(\iota(\eta_*^{-1}(\mathcal{Q})))=L
$$
 is a line and $P \in L \cap \mathcal{Q}$ which is impossible. As a result we get that $P$ is in Sarkisov general position.

\end{proof}

\section{Sarkisov links}\label{section:existencesarkisov}

In this section we study existence of Sarkisov links of type~\rom{2} which appears in the next section as well as the behaviour  of fields of definition of $(-1)$-curves under Sarkisov links of type~\rom{2}.

\begin{Lemma}\label{lemma:linkd9d5}

 Let $X$ be a minimal del Pezzo surface of degree~$5$ over~$\mathbb{K},$ which is defined by a field $\mathbf{M}.$  Then there is a Sarkisov link of type~\rom{2}
$$
 \begin{tikzcd}
&T \arrow[dl, "g" ']  \arrow[dr, "h"] & \\
\mathbb{P}^2   \arrow[rr, "\chi", dashrightarrow] & &X  
\end{tikzcd}
$$
\noindent between $\mathbb{P}^2$ and $X$ such that $g$ is a blowup of a point of degree $5$ with the splitting field $\mathbf{M}$ and  $h$ is a blowup of a $\mathbb{K}$-point.

\end{Lemma}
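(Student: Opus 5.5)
Start from the side of $X$. By Theorem~\ref{th:dp5point} the surface $X$ has a $\mathbb{K}$-point $p$, and by Corollary~\ref{cor:pointisingeneralpos} (or Lemma~\ref{lemma:point>4generalpos}, since $\deg X=5\geqslant 4$) it is in Sarkisov general position. Let $h\colon T\to X$ be the blowup of $p$. Then $T$ is a del Pezzo surface of degree $4$ with $\mathrm{rk}\,\mathrm{Pic}(T)=2$. I would run the two-ray game on $T$: the cone of curves of $T$ has two extremal rays, one spanned by the exceptional curve $E$ of $h$, and we must identify the other contraction $g$.

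Over $\overline{\mathbb{K}}$, write $X_{\overline{\mathbb{K}}}$ as the blowup of $\mathbb{P}^2$ at four points, with the standard basis $L,e_1,\dots,e_4$, and $T_{\overline{\mathbb{K}}}$ as the blowup at five points, with $E=e_5$. The sixteen $(-1)$-curves on $T_{\overline{\mathbb{K}}}$ split as follows:
\begin{itemize}
\item the ten $(-1)$-curves of $X$ that do not meet $E$;
\item the five curves meeting $E$, namely $L-e_i-e_5$ for $i\leqslant 4$, and the conic $2L-e_1-\dots-e_5$;
\item $E$ itself.
\end{itemize}
The five curves meeting $E$ are pairwise disjoint, since they are the strict transforms of the five conics through $p$ in the conic bundle structures of $X$. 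Their union is Galois invariant. Because $\mathrm{rk}\,\mathrm{Pic}(T)=2$, it is a single orbit, and it spans the second extremal ray. Contracting them gives $g\colon T\to S$, where $S$ is a surface of degree $4+5=9$ with Picard rank $1$ and a $\mathbb{K}$-point (the image of $E$, or Theorem~\ref{theorem:Lang-Nishimura}). Hence $S\simeq\mathbb{P}^2$ by Theorem~\ref{theorem:rationalityd>5}. So $g$ is the blowup of a point of degree $5$, and $\chi=h\circ g^{-1}$ is the required link of type~\rom{2}. This agrees with the edge II:1:5 in Theorem~\ref{th:SarkisovlinksdP}.

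It remains to show that the degree $5$ point blown up by $g$ has splitting field exactly $\mathbf{M}$. Its geometric components correspond bijectively to the five $(-1)$-curves $C_1,\dots,C_5$ on $T$ meeting $E$. These in turn correspond to the five conic bundle classes $C_i+E$ on $X_{\overline{\mathbb{K}}}$, equivalently to the five sets of four pairwise disjoint $(-1)$-curves on $X$. The key observation is that the action of $\mathrm{Gal}(\overline{\mathbb{K}}/\mathbb{K})$ on this five-element set is the homomorphism $\mathfrak{f}_X$ of Lemma~\ref{lemma:dp5M}, because the Weyl group $W(A_4)\simeq\mathfrak{S}_5$ acts faithfully on these five objects. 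The stabilizer of all five components is therefore $\mathrm{Ker}(\mathfrak{f}_X)=\mathrm{Gal}(\overline{\mathbb{K}}/\mathbf{M})$, and the splitting field is $\mathbf{M}$.

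The main obstacle is this last identification: one must check that the Galois action on the five exceptional curves of $g$ really is the standard $\mathfrak{S}_5$-action defining $\mathbf{M}$. Concretely, an element fixing all five conic classes must fix every $(-1)$-curve of $X$. I would verify this by noting that each $(-1)$-curve of $X$ is determined by the pair of conic classes it meets with intersection $0$; in the five-element model, the ten $(-1)$-curves are the ten $2$-subsets. Alternatively, one can use the faithfulness of $W(A_4)$ on $\{C_1,\dots,C_5\}$.
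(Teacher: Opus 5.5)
Your proof is correct, but in both halves it argues differently from the paper.

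\textbf{Existence of the link.} The paper does not run the two-ray game. It notes that $X$ has a $\mathbb{K}$-point (Theorem~\ref{th:dp5point}), which is in Sarkisov general position (Lemma~\ref{lemma:point>4generalpos}), and then reads off the link II:1:5 from the classification in Theorem~\ref{th:SarkisovlinksdP}. Your explicit list of the sixteen $(-1)$-curves on $T_{\overline{\mathbb{K}}}$ and your orbit argument make this step self-contained. They also identify the exceptional curves of $g$ concretely.

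\textbf{Splitting field.} Write $\mathbf{M}'$ for the splitting field of the degree-$5$ point. The paper uses two inclusions via Picard ranks rather than $\mathfrak{S}_5$-combinatorics.
\begin{enumerate}
\item Over $\mathbf{M}$ the Galois action on $\mathrm{Pic}(X_{\overline{\mathbb{K}}})$ is trivial, hence so is the action on $\mathrm{Pic}(T_{\overline{\mathbb{K}}})=h^*\mathrm{Pic}(X_{\overline{\mathbb{K}}})\oplus\mathbb{Z}E$. So every exceptional curve of $g$ is defined over $\mathbf{M}$, and $\mathbf{M}'\subset\mathbf{M}$.
\item Over $\mathbf{M}'$ the surface $T$ is the blowup of $\mathbb{P}^2$ at five rational points. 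Hence $\rho(T_{\mathbf{M}'})=6$ and $\rho(X_{\mathbf{M}'})=5$, so all $(-1)$-curves of $X$ are defined over $\mathbf{M}'$, giving $\mathbf{M}\subset\mathbf{M}'$.
\end{enumerate}

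This settles what you call the main obstacle in one line. An element fixing $C_1,\dots,C_5$ also fixes $g^*$ of a line, and these six classes span $\mathrm{Pic}(T_{\overline{\mathbb{K}}})$. So it acts trivially on $\mathrm{Pic}(X_{\overline{\mathbb{K}}})$, and you need neither the Petersen-graph model nor faithfulness of $W(A_4)$. Your combinatorial route works, but it requires more verification.

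Two small slips, neither of which affects the argument:
\begin{itemize}
\item In the five-element model, the $(-1)$-curve $\{a,b\}$ has intersection $1$ with the two conic classes $a,b$ and intersection $0$ with the other three. For example, $e_1$ meets exactly $L-e_1$ and $2L-e_1-e_2-e_3-e_4$. So a $(-1)$-curve is determined by the pair of conic classes it meets with intersection $1$, not $0$.
\item Theorem~\ref{theorem:rationalityd>5} only says $S$ is birational to $\mathbb{P}^2$. To get $S\simeq\mathbb{P}^2$, cite Ch\^atelet's theorem: a Severi--Brauer surface with a $\mathbb{K}$-point is trivial.
\end{itemize}
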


\begin{proof}
The existence of a~$\mathbb{K}$-point  on~$X$ follows from Theorem~\ref{th:dp5point}. By Lemma~\ref{lemma:point>4generalpos} any $\mathbb{K}$-point is in Sarkisov general position.  So the existence of the Sarkisov link follows from Theorem~\ref{th:SarkisovlinksdP}.

Let $P \in X$ be a point which is blown up by $h$ and $E$ is its exceptional divisor. Then $g_*(E)$ is a conic on $\mathbb{P}^2$ and $g$ is a blowup of a point $P'$ of degree $5$ in Sarkisov general position lying  on $g_*(E).$  Let $M'$ be a splitting field of $P'.$ Then since the Galois group $\mathrm{Gal}(\mathbf{M}/\mathbb{K})$ acts on the set of $(-1)$-curves $\{h_*(g^{-1}(P'))\}$ with transitive action, we get that $\mathbf{M}' \subset \mathbf{M}.$ As $M'$ is a splitting field of $P',$ we get that the  Picard rank of $T_{\mathbf{M}'}$ is equal to $6$ and, therefore,  the Picard rank of $X_{\mathbf{M}'}$ is equal to~$5.$ So by minimality of $\mathbf{M}$ we get that $\mathbf{M} \subset \mathbf{M}'.$ Thus, $\mathbf{M}' = \mathbf{M}.$

\end{proof}

\begin{Lemma}\label{lemma:linkd8d5}
 Let $X$ be a minimal del Pezzo surface of degree~$5$ over~$\mathbb{K}$ and $X'$ be a minimal del Pezzo surface of degree~$8$ over~$\mathbb{K}.$ Then there is a Sarkisov link of type~\rom{2}
$$
 \begin{tikzcd}
&T \arrow[dl, "g" ']  \arrow[dr, "h"] & \\
X \arrow[rr, "\chi", dashrightarrow] & & X'
\end{tikzcd}
$$
\noindent between $X$ and $X'.$ Assume that $X$ is defined by a field $\mathbf{M}$ and $X'$ is defined by a field $\mathbf{K}'.$ Then $g$ is a blowup of a point of degree $2$ with splitting field~$\mathbf{K}'$ and $h$  is a blowup of a point of degree $5$ with splitting field $\mathbf{M}.$

\end{Lemma}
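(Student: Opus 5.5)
We will follow the pattern of the proof of Lemma~\ref{lemma:linkd9d5}: first produce the link, then identify the splitting fields of the blown up points by a Picard-rank comparison in both directions. Write $X'\simeq \mathrm{R}_{\mathbf{K}'/\mathbb{K}}(\mathbb{P}^1)$ with the quadratic extension $\mathbb{K}\subset\mathbf{K}'$ uniquely determined by $X'$ (Lemma~\ref{lemma:dP8Weil}). By Lemma~\ref{lemma:existencepointd5p2} there is a point $P$ of degree~$2$ on $X$ with splitting field $\mathbf{K}'$ (take a $\mathbf{K}'$-point of $X_{\mathbf{K}'}$ not defined over $\mathbb{K}$, e.g. a general one, together with its conjugate), and it is in Sarkisov general position. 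Blowing it up gives a del Pezzo surface $T$ of degree~$3$ and Picard rank~$2$. Its other extremal contraction $h$ exists, and by Theorem~\ref{th:SarkisovlinksdP} (the only link of type~\rom{2} from $\mathcal{D}_5$ starting with a degree~$2$ point is II:2:5) $h$ contracts a point of degree~$5$ on a minimal del Pezzo surface $X''$ of degree~$8$.

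The main step is to show $X''\simeq X'$; by Lemma~\ref{lemma:dP8Weil} it suffices that the quadratic field $\mathbf{K}''$ of $X''$ equals $\mathbf{K}'$. On $X'''_{\mathbf{K}''}\simeq\mathbb{P}^1\times\mathbb{P}^1$ the two rulings are defined, so the Picard rank of $X''_{\mathbf{K}''}$ is $2$, hence that of $T_{\mathbf{K}''}$ is at least $3$, hence $X_{\mathbf{K}''}$ is not minimal or $P$ splits over $\mathbf{K}''$. By Lemma~\ref{lemma:dP5minimal quadraticextension} $X_{\mathbf{K}''}$ is minimal, so $\rho(T_{\mathbf{K}''})=3$ forces $P$ to split, i.e. $\mathbf{K}'\subset\mathbf{K}''$, and equality follows from degrees. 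This is the step I expect to need care: one must make sure the extra class in $\mathrm{Pic}(T_{\mathbf{K}''})$ really comes from splitting $P$ and not from new $(-1)$-curves of $X$, which is exactly what minimality of $X_{\mathbf{K}''}$ gives since $\mathrm{Pic}(T)=g^*\mathrm{Pic}(X)\oplus\langle\text{exceptional curves}\rangle$ Galois-equivariantly.

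Finally, let $R$ be the degree~$5$ point blown up by $h$, with splitting field $\mathbf{M}'$. The five components of the exceptional divisor of $h$ are $(-1)$-curves on $T_{\overline{\mathbb{K}}}$; their images under $g$ are curves on $X_{\overline{\mathbb{K}}}$ expressible via the $(-1)$-curves of $X$ and the exceptional curves over $P$, so as in Lemma~\ref{lemma:linkd9d5} one argues that $\mathrm{Gal}(\overline{\mathbb{K}}/\mathbf{M})$ fixes them (also using that $\mathbf{K}'\subset\mathbf{M}$ needs care; alternatively compare Picard ranks directly): over $\mathbf{M}'$ the surface $T$ has Picard rank $2+5=7$, so $X_{\mathbf{M}'}$ has Picard rank $5$, giving $\mathbf{M}\subset\mathbf{M}'$ by minimality of $\mathbf{M}$ (Lemma~\ref{lemma:dp5M}). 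Conversely, over $\mathbf{M}\mathbf{K}'$ all $(-1)$-curves of $T$ are defined, giving $\mathbf{M}'\subset\mathbf{M}\mathbf{K}'$; to get $\mathbf{M}'\subset\mathbf{M}$ I would express each $h$-exceptional curve class as $2L-\sum E_i$-type combination in $g^*\mathrm{Pic}(X)$ plus the sum of both exceptional curves over $P$ (which is $\mathbb{K}$-invariant), so that its Galois stabilizer depends only on the action on $\mathrm{Pic}(X_{\overline{\mathbb{K}}})$, i.e. it is defined over $\mathbf{M}$. Then $\mathbf{M}'=\mathbf{M}$.
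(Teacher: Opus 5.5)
Your proposal is correct in outline but takes a different route from the paper. The paper argues entirely on the degree~$8$ side. It identifies the two $g$-exceptional curves as the strict transforms of the unique $(2,1)$- and $(1,2)$-curves through the degree~$5$ point $P'\subset X'$, which are swapped by $\mathrm{Gal}(\mathbf{K}'/\mathbb{K})$. It then identifies the ten $(-1)$-curves of $X$ with the images of the $(1,1)$-curves $\mathcal{L}_{ijk}$ through triples of the five points, so the Galois action on $(-1)$-curves of $X$ becomes the action on $3$-element subsets of $P'_{\overline{\mathbb{K}}}$. Both inclusions $\mathbf{M}\subseteq\mathbf{M}'\subseteq\mathbf{M}$ come from this explicit bijection.

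You instead start from $X$ and compare Picard ranks, using that $X$ stays minimal over every quadratic extension (Lemma~\ref{lemma:dP5minimal quadraticextension}), much as the paper does in Lemma~\ref{lemma:linkd8d6}. Your Step~2 is clean. It has the advantage of showing explicitly that the link built from a degree~$2$ point with splitting field $\mathbf{K}'$ lands on the given $X'$, a deduction the paper leaves implicit. The same argument shows every II:2:5 link from $X$ behaves this way. The price is that for the degree~$5$ point you need intersection data that the paper reads off from its explicit curves.

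Two points need repair.

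First, $\rho(T_{\mathbf{M}'})=2+5$ presupposes $\mathbf{K}'\subseteq\mathbf{M}'$, which is false in general. In the end $\mathbf{M}'=\mathbf{M}$, and $\mathbf{M}$ has no quadratic subfield at all when $\mathrm{Gal}(\mathbf{M}/\mathbb{K})$ is $\mathbb{Z}/5\mathbb{Z}$ or $\mathfrak{A}_5$. The conclusion survives: if $\mathbf{K}'\not\subseteq\mathbf{M}'$, then $\rho(T_{\mathbf{M}'})=1+5$ while $P$ stays irreducible over $\mathbf{M}'$, so again $\rho(X_{\mathbf{M}'})=5$. For the same reason, your hesitation about $\mathbf{K}'\subseteq\mathbf{M}$ is justified, and the class-expression route is the right one.

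Second, the crux of $\mathbf{M}'\subseteq\mathbf{M}$ is that each $h$-exceptional curve $r_j$ has class $g^*D_j-e_1-e_2$, i.e.\ $r_j\cdot e_1=r_j\cdot e_2=1$. You assert this but do not prove it. Without it, an element of $\mathrm{Gal}(\overline{\mathbb{K}}/\mathbf{M})$ swapping $e_1$ and $e_2$ could a priori move $r_j$. It is easy to supply:
\begin{itemize}
\item Since $\rho(T)=2$, the invariant class $\sum_j r_j$ is a rational combination of $K_T$ and $e_1+e_2$.
\item Intersecting with $K_T$ and with $\sum_j r_j$, and discarding the root that gives negative intersection with $e_1+e_2$, yields $\sum_j r_j=-3K_T-2(e_1+e_2)$.
\item Hence $\sum_{i,j} r_j\cdot e_i=10$. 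Distinct lines on a cubic surface meet with multiplicity at most~$1$, so every $r_j\cdot e_i=1$.
\end{itemize}
Alternatively, you can borrow the paper's identification of $e_1,e_2$ with the strict transforms of the $(2,1)$- and $(1,2)$-curves.

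With this in hand, the classes $g^*D_j=r_j+e_1+e_2$ are linearly independent, so they span $g^*\mathrm{Pic}(X_{\overline{\mathbb{K}}})\otimes\mathbb{Q}$. Your formula then gives both inclusions at once, and the separate Picard-rank step for $\mathbf{M}\subseteq\mathbf{M}'$ becomes unnecessary.
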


\begin{proof}

By Lemma~\ref{lemma:existencepointd5p2} any point of degree $2$  for any quadratic extension of $\mathbb{K}$ on~$X$ is in Sarkisov general position. So the existence of the Sarkisov link follows from Theorem~\ref{th:SarkisovlinksdP}. The construction of the Sarkisov link can be found in~\cite[Theorem~2.6]{Isk}.

Let $P'$ be a point of degree $5$ with a splitting field~$\mathbf{M}'$ on the del Pezzo surface $X'$ of degree $8,$ which is blown up by $h.$ Let $\mathcal{E}_1$ and $\mathcal{E}_2$ be $(2,1)$-divisor and $(1,2)$-divisor, respectively, on~\mbox{$X'_{\overline{\mathbb{K}}}$} passing through $P'.$ Since 
$$
\dim H^0\left(X'_{\overline{\mathbb{K}}},\mathcal{E}_1\right)=\dim H^0\left(X'_{\overline{\mathbb{K}}},\mathcal{E}_2\right)=6
$$
and since $P'$ is in general position, we get that $\mathcal{E}_1$ and $\mathcal{E}_2$ are uniquely defined. Also note that since $X' \simeq \mathrm{R}_{\mathbf{K}'/\mathbb{K}}(\mathbb{P}^1),$ we have that $\mathcal{E}_1$ and $\mathcal{E}_2$ are $\mathrm{Gal}\left(\mathbf{K}'/\mathbb{K}\right)$-conjugate to each other. By definition of $\mathcal{E}_1$ and $\mathcal{E}_2$ we get that $h_*^{-1}\left(\mathcal{E}_1\right)$ and~$h_*^{-1}\left(\mathcal{E}_2\right)$ are two skew $(-1)$-curves on $T_{\overline{\mathbb{K}}}.$  Since $h_*^{-1}\left(\mathcal{E}_1+\mathcal{E}_2\right)$ is defined over $\mathbb{K}$ we contract it with~$g$ and get a minimal del Pezzo surface~$X$ of degree $5,$  as Picard rank of $T$ is equal to~$2.$ So the splitting field of a point on~$X$ which is blown up by $g$ is equal to~$\mathbf{K}'.$ 

Now let us consider all $(1,1)$-divisors passing through any three points among
$$
P'_{\overline{\mathbb{K}}}=\{P'_1, P'_2, P'_3, P'_4, P'_5\}
$$
on $X'_{\overline{\mathbb{K}}}.$ Since the dimension of global sections of $(1,1)$-divisor on $X'_{\overline{\mathbb{K}}}$ is equal to~$4$ we get that for any $3$ points in general position there is a unique $(1,1)$-divisor passing through these points. Denote by $\mathcal{L}_{ijk}$ the irreducible $(1,1)$-divisor on  $X'_{\overline{\mathbb{K}}}$ passing through $P'_i,$ $P'_j$ and $P'_k$ for pairwise different  $i,$ $j$ and $k.$ Such $\mathcal{L}_{ijk}$  for any pairwise different  $i,j,k \in \{1,2,3,4,5\}$ exist since $P'$ is in Sarkisov general position. By the uniqueness of $\mathcal{L}_{ijk}$ we get that any $\mathcal{L}_{ijk}$ is defined over splitting field of~$P'.$ Note that $g_*\left(h_*^{-1}(\mathcal{L}_{ijk})\right)$ is a $(-1)$-curve on $X_{\overline{\mathbb{K}}}$ for any pairwise different  $i,$ $j$ and~$k.$ Since there are exactly ten $(-1)$-curves on $X_{\overline{\mathbb{K}}}$ and $C^3_5=10,$ we get that any $(-1)$-curve on  $X_{\overline{\mathbb{K}}}$ is of the form 
$$
g_*\left(h_*^{-1}(\mathcal{L}_{ijk})\right) \;\; \text{for pairwise different} \;\; i,j,k \in \{1,2,3,4,5\}.
$$ 
\noindent By definition of the field~$\mathbf{M}$  it is the minimal field over which all $g_*\left(h_*^{-1}(\mathcal{L}_{ijk})\right)$ are defined. On the other hand, all $g_*\left(h_*^{-1}(\mathcal{L}_{ijk})\right)$ are also defined over the splitting field of $P',$ since all $\mathcal{L}_{ijk}$ are  defined over the splitting field of $P'.$ This means that~\mbox{$\mathbf{M} \subseteq \mathbf{M}'.$} On the other hand, since any $\mathcal{L}_{ijk}$ is defined over $\mathbf{M},$ this means that any triple $\{P'_i, P'_j, P'_k\}$ is defined over $\mathbf{M}$ for any  pairwise different  $i,$ $j$ and~$k$ in $\{1,2,3,4,5\}.$ This means that any $P'_l$ for $l \in \{1,2,3,4,5\}$ is defined over $\mathbf{M},$ thus~\mbox{$\mathbf{M}' \subseteq \mathbf{M}.$} Therefore, we get the desired equality~$\mathbf{M}=\mathbf{M}'.$

\end{proof}

The next three lemmas give us the existence of Sarkisov links of type~\rom{2} starting from a minimal rational del Pezzo surface of degree~$6.$

\begin{Lemma}\label{lemma:linkd8d6}
 Let $X$  be a minimal rational del Pezzo surface of degree~$6$ over~$\mathbb{K}.$ Then there is a Sarkisov link of type~\rom{2}
$$
 \begin{tikzcd}
&T \arrow[dl, "g" ']  \arrow[dr, "h"] & \\
X   \arrow[rr, "\chi", dashrightarrow] & &X ' 
\end{tikzcd}
$$
\noindent between $X$ and a minimal del Pezzo surface $X'$ of degree~$8.$   Assume that $X$ is defined by a pair of fields $\left(\mathbf{K}, \mathbf{L} \right)$ and~$X'$ is defined by a field $\mathbf{K}'.$ Then $\mathbf{K}=\mathbf{K}',$  the morphism $g$  is a blowup of a $\mathbb{K}$-point  and  $h$ is a blowup of a point of degree $3$ with a splitting field $\mathbf{L}.$

\end{Lemma}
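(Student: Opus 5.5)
Since $X$ is rational, Theorem~\ref{theorem:Lang-Nishimura} provides a $\mathbb{K}$-point $P$ on $X$, and by Corollary~\ref{cor:pointisingeneralpos} it is in Sarkisov general position. Let $g\colon T\to X$ be the blowup of $P$; then $T$ is a del Pezzo surface of degree~$5$ of Picard rank~$2$. By Theorem~\ref{th:SarkisovlinksdP} (the link II:1:3 between $\mathcal{D}_6$ and $\mathcal{D}_8$, constructed as in~\cite[Theorem~2.6]{Isk}), the other extremal ray of $T$ gives a contraction $h\colon T\to X'$ onto a minimal del Pezzo surface of degree~$8$. This contraction blows down a Galois orbit of three disjoint $(-1)$-curves, which gives the existence of $\chi$. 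These three curves are the proper transforms of the three conics on $X_{\overline{\mathbb{K}}}$ through $P$ with self-intersection~$0$, one from each conic bundle class $E_i+F_i$. Equivalently, they are the curves in the classes $-K_X-E_i-F_i$ minus the exceptional curve (with the convention that such a conic meets exactly those $(-1)$-curves in its class $E_i+F_i$). So $h$ is the blowup of a point $P'$ of degree~$3$.

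Next I would identify the splitting field of $P'$. The three $(-1)$-curves contracted by $h$ are indexed by the pairs $\{E_i,F_i\}$, $i\in\{1,2,3\}$, compatibly with the Galois action. Thus the field of definition of each individual curve, and hence of each geometric point of $P'$, is governed by the action of $\mathrm{Gal}(\mathbf{F}/\mathbb{K})$ on the three pairs. The minimal Galois extension over which all three points are defined is therefore exactly $\mathbf{L}$. I would make this rigorous in the style of Lemma~\ref{lemma:linkd9d5}, by showing both inclusions:
\begin{itemize}
\item over $\mathbf{L}$ each conic class $E_i+F_i$ is defined, so each curve is defined and $P'$ splits;
\item conversely, over the splitting field $\mathbf{L}'$ of $P'$ the Picard rank of $T_{\mathbf{L}'}$ is~$4$, so each class $h^*(\cdot)$ of the exceptional curves is defined. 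Pushing down to $X$ shows each $E_i+F_i$ is defined over $\mathbf{L}'$, whence $\mathbf{L}\subset\mathbf{L}'$.
\end{itemize}

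For $\mathbf{K}=\mathbf{K}'$, I would use Lemma~\ref{lemma:dP8Weil}: $X'_{\overline{\mathbb{K}}}\simeq\mathbb{P}^1\times\mathbb{P}^1$, and $\mathbf{K}'$ is the field over which the two rulings are separately defined. I would compute the pullbacks of the rulings to $T$ and push them to $X$. A ruling on $X'$ pulls back to a class of self-intersection~$0$, and its push-forward to $X$ has $K_X$-degree determined by its intersection with the exceptional curve of $g$. I expect the two rulings to correspond to the classes $-K_X - \sum E_i$-type combinations, e.g. the pullbacks of lines from $S$ and from $S^{\mathrm{opp}}$, namely $\eta^*\mathcal{O}(1)$ and $(\eta^{\mathrm{opp}})^*\mathcal{O}(1)$, minus the exceptional curve. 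Each of these is fixed by exactly the subgroup stabilizing the triples $\{E_1,E_2,E_3\}$ and $\{F_1,F_2,F_3\}$. Hence the rulings are defined over $\mathbf{K}$, and swapped by $\mathrm{Gal}(\mathbf{K}/\mathbb{K})$. This gives $\mathbf{K}'=\mathbf{K}$. Alternatively, $X_{\mathbf{K}'}$ has Picard rank~$2$ after the link, and Lemma~\ref{lemma:extensiondeg2dP6unique} then finishes.

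The main obstacle is the lattice bookkeeping: verifying precisely which classes on $T$ are the exceptional curves of $h$ and the rulings. The Galois-field identifications then follow formally.
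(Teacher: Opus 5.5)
Your proposal is correct in substance and follows the paper's outline.

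\textbf{Comparison with the paper.} Existence is argued exactly as in the paper.

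For $\mathbf{K}=\mathbf{K}'$, the paper uses precisely your ``alternative'': the Picard rank of $T_{\mathbf{K}'}$ is at least $3$, so $X_{\mathbf{K}'}$ has Picard rank $2$, and Lemma~\ref{lemma:extensiondeg2dP6unique} applies. Your primary route is a direct lattice check that bypasses that lemma, and your guess is right. Let $H=\eta^*\mathcal{O}(1)$ and let $E$ be the $g$-exceptional curve. Then the pulled-back rulings are $g^*H-E$ and $g^*(2H-E_1-E_2-E_3)-E$, where $2H-E_1-E_2-E_3=(\eta^{\text{opp}})^*\mathcal{O}(1)$. Both classes have square $0$ and $K_T$-degree $-2$, and both are orthogonal to the $h$-exceptional curves. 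They are swapped exactly by the Galois elements exchanging the two triples. Note that ``$-K_X-\sum E_i$'' is not of this type, since it equals $\sum F_i$.

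For the splitting field, the paper reads the same Galois-equivariant correspondence from the $X'$ side: the $(1,0)$- and $(0,1)$-curves through $P'_i$ become a pair of $(-1)$-curves on $X$ with the intersection pattern of $\{E_i,F_i\}$. You read it from the $X$ side via the conics through $P$. Your two inclusions spell out what the paper leaves implicit.

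\textbf{Corrections to the class bookkeeping.} Since $E_i\cdot F_i=0$, the class $E_i+F_i$ has square $-2$, so it is not a conic class. Moreover $-K_X-E_i-F_i$ is twice a conic class. In particular, $g^*(-K_X-E_i-F_i)-E$ has $K_T$-degree $-3$ and is not the class of a $(-1)$-curve. The conic class attached to $\{E_i,F_i\}$ (the unique one meeting both) is
\[
H-E_i=E_j+F_k=E_k+F_j=\tfrac12\left(-K_X-E_i-F_i\right),\qquad \{i,j,k\}=\{1,2,3\},
\]
and the curves contracted by $h$ are $g^*(H-E_i)-E$.

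Your claim that $T_{\mathbf{L}'}$ has Picard rank $4$ is also false in general. It fails when $\mathbf{K}\subset\mathbf{L}$, which is the case $\mathrm{Gal}(\mathbf{F}/\mathbb{K})\simeq\mathfrak{S}_3$; there $\mathbf{L}=\mathbf{F}$ and the rank is $5$. You do not need this claim, and the inclusion $\mathbf{L}\subset\mathbf{L}'$ follows directly:
\begin{enumerate}
\item Each $h^{-1}(P'_i)$ is defined over $\mathbf{L}'$ by definition.
\item Hence the class $H-E_i$ of its image on $X$ is fixed by $\mathrm{Gal}(\overline{\mathbb{K}}/\mathbf{L}')$.
\item Therefore so is $-K_X-2(H-E_i)=E_i+F_i$.
\item Since $E_i+F_i$ is the only effective divisor in its class, the pair $\{E_i,F_i\}$ is defined over $\mathbf{L}'$.
\end{enumerate}
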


\begin{proof}
By Corollary~\ref{cor:pointisingeneralpos} we get that any $\mathbb{K}$-point $X$ is in Sarkisov general position. So the existence of the Sarkisov link follows from Theorem~\ref{th:SarkisovlinksdP}. The construction of the Sarkisov link can be found in~\cite[Theorem~2.6]{Isk}.

Now let us prove that $\mathbf{K}=\mathbf{K}'.$ By definition of the field~$\mathbf{K}'$ (i.e. the field which is defined in Lemma~\ref{lemma:dP8Weil}) we get $X'_{\mathbf{K}'} \simeq \mathbb{P}^1_{\mathbf{K}'} \times \mathbb{P}^1_{\mathbf{K}'}.$ So this means that the Picard rank of the surface $T_{\mathbf{K}'}$ is more than or equal to $3.$ Since by Theorem~\ref{th:SarkisovlinksdP} the map $g$ is a blowup of a  $\mathbb{K}$-point, we get that the Picard rank of~$X_{\mathbf{K}'}$ is $2.$ So by Lemma~\ref{lemma:extensiondeg2dP6unique} we obtain~$\mathbf{K}=\mathbf{K}'.$

Finally, we need to prove that $h$ is a blowup of a point of degree $3$ with a splitting field $\mathbf{L}.$ Denote by $P'$ a point of degree~$3$ on the del Pezzo surface  $X'$ of degree $8$ such that $h$ is a blowup in $P'.$  Let $\widetilde{\mathbf{L}}$ be a splitting field of the point $P'.$ Let 
$$
P'_{\widetilde{\mathbf{L}}}=\{P'_1,P'_2,P'_3\}.
$$
 Denote by $\mathcal{E}_i$ and $\mathcal{F}_i$ the~$(1,0)$-divisor and $(0,1)$-divisor, respectively, on $X'_{\overline{\mathbb{K}}}$ passing through $P'_i$ for~\mbox{$i \in \{1,2,3\}.$}  We obtain $(-1)$-curves~\mbox{$g_*\left(h_*^{-1}(\mathcal{E}_i)\right)$} and $g_*\left(h_*^{-1}(\mathcal{F}_i)\right)$ on $X_{\overline{\mathbb{K}}}.$  Note that their intersections are:
\begin{gather*}
g_*\left(h_*^{-1}(\mathcal{E}_i)\right) \cdot g_*\left(h_*^{-1}(\mathcal{E}_j)\right)=g_*\left(h_*^{-1}(\mathcal{F}_i)\right) \cdot g_*\left(h_*^{-1}(\mathcal{F}_j)\right) =0 \quad \text{for} \quad  i \neq j;  \\
g_*\left(h_*^{-1}(\mathcal{E}_i)\right) \cdot g_*\left(h_*^{-1}(\mathcal{F}_j)\right)=1 \quad \text{for} \quad  i \neq j;  \\
g_*\left(h_*^{-1}(\mathcal{E}_i)\right)  \cdot g_*\left(h_*^{-1}(\mathcal{F}_i)\right)=0 \quad \text{for} \quad   i \in \{1,2,3\}. 
\end{gather*}
So by definition of Severi--Brauer data (see Definition~\ref{def:SBdata}) we get that the minimal field over which the pairs 
$$
\{g_*\left(h_*^{-1}(\mathcal{E}_i)\right),  g_*\left(h_*^{-1}(\mathcal{F}_i)\right)\}
$$
for~\mbox{$i \in \{1,2,3\}$} are defined is $\mathbf{L}.$ Since  the pairs $\{\mathcal{E}_i, \mathcal{F}_i\}$ for~\mbox{$i \in \{1,2,3\}$} are defined over~$\widetilde{\mathbf{L}},$ we get $\widetilde{\mathbf{L}}=\mathbf{L}.$

\end{proof}

\begin{Lemma}\label{lemma:linkd6d6p2}
 Let $X$  be a minimal rational del Pezzo surface of degree~$6$ over~$\mathbb{K}.$ Then there is a  Sarkisov link of type~\rom{2}
$$
 \begin{tikzcd}
&T \arrow[dl, "g" ']  \arrow[dr, "h"] & \\
X  \arrow[rr, "\chi", dashrightarrow] & & X'
\end{tikzcd}
$$
\noindent between $X$ and a minimal del Pezzo surface $X'$ of degree~$6,$ such that $g$ and $h$ are blowups of a point of degree~$2.$ Assume that $X$ is defined by a pair of fields $\left(\mathbf{K}, \mathbf{L} \right)$ and~$X'$ is defined by a pair of fields $\left(\mathbf{K}', \mathbf{L}' \right).$ If $g$ is a blowup of a point~$P$ of degree~$2$ with splitting field $\mathbf{K}'',$ then $\mathbf{K}'=\mathbf{K}''$ and $\mathbf{L}=\mathbf{L}'.$

\end{Lemma}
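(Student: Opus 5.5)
Existence comes first. By Lemma~\ref{lemma:existence2pointd6} every point of degree~$2$ on $X$ is in Sarkisov general position. We need such a point, say for some quadratic extension $\mathbf{K}''$; since $\mathbf{K}$ itself is a quadratic extension of $\mathbb{K}$, there is one. Concretely, $X$ is rational, so it has a $\mathbb{K}$-point, and hence $X_{\mathbf{K}''}$ has plenty of $\mathbf{K}''$-points off the $(-1)$-curves. Taking a $\mathbf{K}''$-point that is not defined over $\mathbb{K}$ and adding its conjugate gives a point $P$ of degree~$2$ with splitting field $\mathbf{K}''$. The blowup $T$ of $P$ is then a del Pezzo surface of degree~$4$ with Picard rank~$2$. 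By Theorem~\ref{th:SarkisovlinksdP}, the only link of type II:2:2 starting from $\mathcal{D}_6$ ends in $\mathcal{D}_6$, so the second extremal contraction $h\colon T\to X'$ blows up a point $P'$ of degree~$2$ on a minimal del Pezzo surface $X'$ of degree~$6$.

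Next, the $(-1)$-curves on $X'$. On $T_{\overline{\mathbb{K}}}$ (degree~$4$, sixteen $(-1)$-curves) I will list them explicitly. There are the transforms $\widetilde E_i,\widetilde F_i$ of the six curves on $X$ and the exceptional curves $G_1,G_2$ over $P_1,P_2$, which are contracted by $g$. The remaining eight curves have classes such as $-K_X-\ldots$ minus the $G$'s. The curves contracted by $h$ are the two curves of class $g^*(\text{conic-bundle fibre through }P_1,P_2)$, suitably chosen. More cleanly, they are the two classes $D_1,D_2$ with $D_j\cdot G_1=D_j\cdot G_2=1$ that are swapped by Galois. Then the six $(-1)$-curves of $X'$ are the images of the six curves on $T$ disjoint from $D_1\cup D_2$. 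I would identify these as $G_1,G_2$ together with four other curves, probably of the form $g^*F_i - G_1-G_2$ and similar, matched with the triples.

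The key step is to track the two Galois-invariant structures. The partition of the six curves on $X'$ into two triples $\{E'_i\}$, $\{F'_i\}$ should be $\{G_1,\ast,\ast\}$ and $\{G_2,\ast,\ast\}$, with $G_1$ and $G_2$ lying in different triples. The triples are then exchanged exactly when $G_1,G_2$ are exchanged, that is, by $\mathrm{Gal}(\overline{\mathbb{K}}/\mathbf{K}'')$-nontriviality. Combined with the transformation of the original triples, this should give $\mathbf{K}'=\mathbf{K}''$. A shortcut is Lemma~\ref{lemma:extensiondeg2dP6unique}: over $\mathbf{K}''$ the point $P$ splits, so $T_{\mathbf{K}''}$ has Picard rank at least~$3$. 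Then $X'_{\mathbf{K}''}$ has Picard rank~$2$, because $h$ contracts a Galois pair that is not defined over $\mathbf{K}''$ individually. It also cannot have Picard rank at least~$3$ unless $P'$ splits too. That lemma then yields $\mathbf{K}'=\mathbf{K}''$.

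For the pairs, I will show that the three pairs $\{E'_i,F'_i\}$ on $X'$ correspond bijectively and Galois-equivariantly to the three pairs $\{E_i,F_i\}$ on $X$. The natural guess is that the pair $\{E'_i,F'_i\}$ consists of the images of the two curves on $T$ meeting the conic-bundle structure indexed by $i$. Equivalently, the three conic bundle classes $E_i+F_j$-type fibrations of $X$ (with $\{i,j,k\}=\{1,2,3\}$), pulled back to $T$ and pushed to $X'$, are exactly the three conic bundle classes of $X'$. The degree-$0$ classes are preserved because $h$ contracts curves meeting the fibres trivially. Since the stabilizer of each conic bundle class determines $\mathbf{L}$, this gives $\mathbf{L}=\mathbf{L}'$. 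I expect the main obstacle to be this explicit bookkeeping of classes in $\mathrm{Pic}(T_{\overline{\mathbb{K}}})$: checking that the conic bundle classes $f$ on $X$ with $f\cdot P$ arbitrary really descend to conic bundle classes on $X'$. The alternative check is that the quantities $D_j\cdot g^*f$ are equal for $j=1,2$, which must be verified.
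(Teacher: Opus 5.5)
\emph{Verdict: genuine gap.} Your existence argument is the paper's: Lemma~\ref{lemma:existence2pointd6} plus Theorem~\ref{th:SarkisovlinksdP}. For $\mathbf{K}'=\mathbf{K}''$ and $\mathbf{L}'=\mathbf{L}$ the paper does no computation; it simply invokes \cite[Corollary~6.7]{Yas}. Tracking the Galois action in $\mathrm{Pic}(T_{\overline{\mathbb{K}}})$ is a reasonable self-contained alternative. However, the classes you propose are wrong exactly where the argument rests.

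\textbf{Setup.} Write $T_{\overline{\mathbb{K}}}$ as the blowup of $\mathbb{P}^2$ at $Q_1,Q_2,Q_3,\eta(P_1),\eta(P_2)$. Let $H$ be the line class pulled back via $\eta$, $H'=-K_X-H$ the one from $S^{\mathrm{opp}}$, and $C_i=H-E_i=E_j+F_k$ the three conic bundle classes of $X$.

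\textbf{Where the proposal fails.}
\begin{itemize}
\item The curves contracted by $h$ are $D_1=g^*H-G_1-G_2$ and $D_2=g^*H'-G_1-G_2$. Your second description of them is right; the first is not, since a conic-bundle fibre through both $P_1,P_2$ gives a class of square $-2$.
\item You characterised $D_j$ by $G_a\cdot D_j=1$. That same property means $G_1,G_2$ are \emph{not} disjoint from $D_1\cup D_2$; indeed $(h_*G_a)^2=1$. Also $g^*F_i-G_1-G_2$ has square $-3$. So your list of the six $(-1)$-curves of $X'$ is wrong.
\item $D_j\cdot g^*C_i=1$ for all $i,j$, so it is false that $h$ contracts curves meeting the fibres trivially. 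In fact $h_*g^*C_i$ has square $2$ and is not a conic bundle class. This breaks your argument for $\mathbf{L}'=\mathbf{L}$.
\item The Picard-rank shortcut also fails as written. $D_1,D_2$ are swapped exactly by the elements acting nontrivially on $\mathbf{K}$, so they \emph{are} individually defined over $\mathbf{K}''$ when $\mathbf{K}''=\mathbf{K}$. When $\mathbf{K}''\neq\mathbf{K}$, you need $\rho(T_{\mathbf{K}''})=3$ exactly (i.e.\ minimality of $X_{\mathbf{K}''}$), not just $\geqslant 3$, to get $\rho(X'_{\mathbf{K}''})=2$.
\end{itemize}
Your guiding ideas are nonetheless correct: the triples of $X'$ are indexed by $P_1,P_2$, and the pairs are indexed by the conic bundles of $X$. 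They just do not hold for the reasons you give.

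\textbf{Repair.}
\begin{itemize}
\item The six curves on $T_{\overline{\mathbb{K}}}$ disjoint from $D_1\cup D_2$ are $L_{ia}=g^*C_i-G_a$. These are the strict transforms of the fibre of the $i$-th conic bundle through $P_a$. They satisfy $L_{ia}\cdot L_{jb}=1-\delta_{ij}-\delta_{ab}$.
\item Hence on $X'$ the triples are $\{h_*L_{1a},h_*L_{2a},h_*L_{3a}\}$ for $a=1,2$, and the pairs are $\{h_*L_{i1},h_*L_{i2}\}$ for $i=1,2,3$.
\item A Galois element permutes the triples as it permutes $\{P_1,P_2\}$. This gives $\mathbf{K}'=\mathbf{K}''$ directly, with no need for Lemma~\ref{lemma:extensiondeg2dP6unique}.
\item It permutes the pairs as it permutes $\{C_1,C_2,C_3\}$. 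This is its action on the pairs $\{E_i,F_i\}$ of $X$: a triple-swapping $\sigma$ with $\sigma(E_i)=F_{\pi(i)}$ has $\sigma(H)=H'$, so $\sigma(C_i)=H'-F_{\pi(i)}=C_{\pi(i)}$. This gives $\mathbf{L}'=\mathbf{L}$.
\item Equivalently, the conic bundle classes of $X'$ satisfy $h^*C_i'=L_{j1}+L_{k2}=-K_T-g^*C_i$. This is the correct replacement for your claim that conic bundle classes are carried to conic bundle classes.
\end{itemize}
With this correction, your route gives an explicit proof of what the paper takes from the reference. It also shows that the point blown up by $h$ has splitting field $\mathbf{K}$.
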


\begin{proof}

By Lemma~\ref{lemma:existence2pointd6} any point of degree $2$ on a minimal del Pezzo surface of degree~$6$ is in Sarkisov general position.  So the existence of the Sarkisov link follows from Theorem~\ref{th:SarkisovlinksdP}. The construction of the Sarkisov link can be found in~\cite[Theorem~2.6]{Isk}. The equalities of the fields follows from~\cite[Corollary~6.7]{Yas}.

\end{proof}

\begin{Lemma}\label{lemma:linkd6d6p3}
 Let $X$  be a minimal rational del Pezzo surface of degree~$6$ over~$\mathbb{K}.$  Then there is a  Sarkisov link of type~\rom{2}
$$
 \begin{tikzcd}
&T \arrow[dl, "g" ']  \arrow[dr, "h"] & \\
X  \arrow[rr, "\chi", dashrightarrow] & & X'
\end{tikzcd}
$$
\noindent between $X$ and a minimal del Pezzo surface $X'$ of degree~$6,$ such that $g$ and $h$ are blowups of a point of degree~$3.$ Assume that $X$ is defined by a pair of fields $\left(\mathbf{K}, \mathbf{L} \right)$ and~$X'$ is defined by a pair of fields $\left(\mathbf{K}', \mathbf{L}' \right).$ Assume that $\mathbf{L} \neq \mathbf{L}'.$ If $g$ is a blowup of a point $P$ of degree~$3$ with splitting field $\mathbf{L}'',$ then $\mathbf{L}'=\mathbf{L}''$ and~\mbox{$\mathbf{K}=\mathbf{K}'.$}

\end{Lemma}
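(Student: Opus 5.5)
The proof has two parts. First I show that the Sarkisov link exists. Then I identify the fields $\mathbf{K}'$ and $\mathbf{L}'$ by tracking how the $(-1)$-curves of $X'$ are built from the blown-up points.

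\emph{Existence.} I need a point of degree $3$ in Sarkisov general position on $X$. I take a cubic extension, or a degree-$3$ point whose splitting field $\widetilde{\mathbf{L}}$ differs from $\mathbf{L}$. Because $X$ is rational it has a $\mathbb{K}$-point, so it is $\mathbb{K}$-birational to $\mathbb{P}^2$ by Theorem~\ref{theorem:rationalityd>5}, and its points of degree $3$ are therefore Zariski dense. Lemma~\ref{lemma:existence3pointd6} then says that such a point $P$ with $\widetilde{\mathbf{L}}\neq\mathbf{L}$ is in Sarkisov general position. If $\mathbb{K}$ has only the single cubic Galois closure $\mathbf{L}$, I handle the case $\widetilde{\mathbf{L}}=\mathbf{L}$ directly: I choose $P$ generic among points of degree $3$, so that it avoids the finitely many special configurations excluded in the proof of Lemma~\ref{lemma:existence3pointd6}. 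With the point in hand, Theorem~\ref{th:SarkisovlinksdP} (the link II:3:3 from $\mathcal{D}_6$ to itself, following \cite[Theorem~2.6]{Isk}) gives $\chi$ with $h$ also the blowup of a point of degree $3$.

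\emph{Identifying $\mathbf{L}'$.} Now assume $\mathbf{L}\neq\mathbf{L}'$ and that $g$ blows up $P$ with splitting field $\mathbf{L}''$. I describe the $(-1)$-curves of $X'_{\overline{\mathbb{K}}}$ explicitly. Since $T$ has degree $3$ and Picard rank $2$, the exceptional curves of $h$ are the images of curves on $X$ of the form $-K_X-E-P_i-P_j$ type. More precisely, they are the proper transforms of the members of $|-K_X - P_i - P_j - \dots|$ of anticanonical degree $2$ through pairs of the $P_k$. The remaining $(-1)$-curves of $X'$ come from the six $(-1)$-curves of $X$ and from exceptional curves over the $P_k$.

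I group these six $(-1)$-curves of $X'$ into the three pairs $\{E'_i,F'_i\}$ with $E'_i\cdot F'_i=0$, and show that each pair is indexed by a single point $P_i$, exactly as in the proof of Lemma~\ref{lemma:linkd8d6}. It follows that each pair is defined over $\widetilde{\mathbf{L}}=\mathbf{L}''$, while the Galois action on the three pairs is the action on $\{P_1,P_2,P_3\}$. Hence $\mathbf{L}'=\mathbf{L}''$.

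\emph{Identifying $\mathbf{K}'$.} The two triples $\{E'_1,E'_2,E'_3\}$ and $\{F'_1,F'_2,F'_3\}$ can be written via the map $\eta\colon X_{\mathbf{K}}\to\mathbb{P}^2$. One triple consists of lines through pairs of $\eta(P_i)$, or of the conics through $Q$ and the $\eta(P_i)$. The other triple is its $\mathrm{Gal}(\mathbf{K}/\mathbb{K})$-conjugate, as in the computation with $\iota$ in Lemma~\ref{lemma:existence3pointd6}. Each triple is therefore defined over $\mathbf{K}$, so $\mathbf{K}'\subseteq\mathbf{K}$. Since $\mathbf{K}'$ is quadratic, we get $\mathbf{K}'=\mathbf{K}$. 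Alternatively, $X_{\mathbf{K}}$ and $X'_{\mathbf{K}}$ are linked over $\mathbf{K}$, $X'_{\mathbf{K}}$ has Picard rank $2$, and Lemma~\ref{lemma:extensiondeg2dP6unique} gives the equality.

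\emph{Main obstacle.} The hard step is the explicit bookkeeping of which classes on $T$ become the $(-1)$-curves of $X'$, and verifying the pairing and triple structure. I would do this in $\mathrm{Pic}(T_{\overline{\mathbb{K}}})$, written in the basis $H,Q_1,Q_2,Q_3,P_1,P_2,P_3$ of the blowup of $\mathbb{P}^2$ at six points. A secondary worry is the existence step when every cubic point has splitting field $\mathbf{L}$. If the genericity argument fails there, I would instead cite \cite[Corollary~6.7]{Yas}, as in Lemma~\ref{lemma:linkd6d6p2}.
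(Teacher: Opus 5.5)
Your route differs from the paper's. The paper gets existence exactly as in your main case (a degree-$3$ point with splitting field different from $\mathbf{L}$ is in Sarkisov general position by Lemma~\ref{lemma:existence3pointd6}) and then cites \cite[Corollary~6.11]{Yas} for $\mathbf{L}'=\mathbf{L}''$ and $\mathbf{K}=\mathbf{K}'$. You instead want to read both fields off the $(-1)$-curves of $X'$. That plan is sound, but the $\mathbf{L}'$ bookkeeping you write down is wrong, so your claim that each pair of $X'$ is indexed by one $P_l$ rests on a false description.

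The exceptional curves of $h$ cannot be curves of anticanonical degree $2$ through two of the $P_k$: their proper transforms $\widetilde{C}$ would have $-K_T\cdot\widetilde{C}=0$, which is impossible since $-K_T$ is ample. Also, no $(-1)$-curve of $X$ and no exceptional curve of $g$ survives on $X'$. Do the computation you propose in $\mathrm{Pic}(T_{\overline{\mathbb{K}}})$, with $H$, with $e_1,e_2,e_3$ over the $Q_i$, and with $p_1,p_2,p_3$ over the $\eta(P_l)$. The second extremal ray of $T$ is spanned by $-2K_T-(p_1+p_2+p_3)=6H-2\sum e_i-3\sum p_l$. The only $(-1)$-curves meeting this class negatively are
$$
D_i=2H-e_j-e_k-p_1-p_2-p_3,\qquad \{i,j,k\}=\{1,2,3\},
$$
that is, the conics through $Q_j$, $Q_k$ and all three $\eta(P_l)$. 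These are what $h$ contracts. Each $D_i$ is disjoint from $E_i$ and $F_i$, so $\mathrm{Exc}(h)$ is indexed by the pairs of $X$. The $(-1)$-curves disjoint from all $D_i$ are $H-p_m-p_n$ and $2H-e_1-e_2-e_3-p_m-p_n$. By contrast, $e_i$, $H-e_j-e_k$ and $p_l$ become curves of self-intersection $1$ or $2$ on $X'$.

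With these classes both equalities are immediate.
\begin{itemize}
\item The pairs of $X'$ are $\{H-p_m-p_n,\ 2H-e_1-e_2-e_3-p_m-p_n\}$ for $\{l,m,n\}=\{1,2,3\}$, i.e.\ the two of the six curves disjoint from $p_l$. This characterization is Galois-equivariant, so the action on the pairs of $X'$ is the action on $\{P_1,P_2,P_3\}$, and $\mathbf{L}'=\mathbf{L}''$.
\item The two triples, which are the ones you name in your $\mathbf{K}'$ paragraph, are $\mathrm{Gal}(\overline{\mathbb{K}}/\mathbf{K})$-stable, because $H$, $e_1+e_2+e_3$ and the set $\{p_1,p_2,p_3\}$ are. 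Hence $\mathbf{K}'\subseteq\mathbf{K}$, and equality holds since both are quadratic.
\end{itemize}
Compared with the paper's citation, this is self-contained. It also shows the equalities hold for every link of this type, so the hypothesis $\mathbf{L}\neq\mathbf{L}'$ is equivalent to $\mathbf{L}''\neq\mathbf{L}$. That is precisely the case in which Lemma~\ref{lemma:existence3pointd6} gives general position.

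Your fallback for $\widetilde{\mathbf{L}}=\mathbf{L}$ is therefore irrelevant to the field statement, and the paper does not treat that case either. As written it is also imprecise. The excluded locus is a proper closed subset, not finitely many configurations. Finding a point off it via density of $\mathbb{K}$-points of the rational variety $\mathrm{R}_{\mathbf{L}_1/\mathbb{K}}(X_{\mathbf{L}_1})$, with $\mathbf{L}_1\subset\mathbf{L}$ cubic, needs $\mathbb{K}$ infinite. If you fall back on \cite{Yas}, the result the paper uses for degree-$3$ links is Corollary~6.11; Corollary~6.7 is the one for degree-$2$ links.
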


\begin{proof}
By Lemma~\ref{lemma:existence3pointd6} any point of degree $3$ of splitting field $\mathbf{L}'' \neq \mathbf{L}$ on $X$ is in Sarkisov general position.  So the existence of the Sarkisov link follows from Theorem~\ref{th:SarkisovlinksdP}. The construction of the Sarkisov link can be found in~\cite[Theorem~2.6]{Isk}. The equalities of the fields follows from~\cite[Corollary 6.11]{Yas}.

\end{proof}

The last lemma of this section is  well-known fact about the stereographic projection.

\begin{Lemma}\label{lemma:linkd9d8}
Let $\mathbb{K} \subset \mathbf{K}$ be a quadratic field extension. Let $X \simeq \mathrm{R}_{\mathbf{K}/\mathbb{K}}(\mathbb{P}^1).$ Then there is a Sarkisov link of type~\rom{2}
$$
 \begin{tikzcd}
&T \arrow[dl, "g" ']  \arrow[dr, "h"] & \\
\mathbb{P}^2   \arrow[rr, "\chi", dashrightarrow] & &X  
\end{tikzcd}
$$
\noindent between $\mathbb{P}^2$ and $X$ such that $g$ is a blowup of a point of degree $2$ with a splitting field $\mathbf{K}.$

\end{Lemma}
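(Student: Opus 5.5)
The plan is to build the link explicitly, going from $X$ to $\mathbb{P}^2$ by stereographic projection from a $\mathbb{K}$-point, and then take the inverse.

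\textbf{Step 1: a $\mathbb{K}$-point in general position.} The surface $X\simeq \mathrm{R}_{\mathbf{K}/\mathbb{K}}(\mathbb{P}^1)$ has $\mathbb{K}$-points, since $X(\mathbb{K})=\mathbb{P}^1(\mathbf{K})$. It is minimal, because the Galois conjugation swaps the two rulings of $X_{\overline{\mathbb{K}}}\simeq\mathbb{P}^1\times\mathbb{P}^1$. Choose a $\mathbb{K}$-point $R\in X$. By Lemma~\ref{lemma:point>4generalpos}, $R$ is in Sarkisov general position. Let $h\colon T\to X$ be the blowup of $R$; then $T$ is a del Pezzo surface of degree $7$ and Picard rank~$2$.

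\textbf{Step 2: the second contraction.} Over $\overline{\mathbb{K}}$, let $\mathcal{E}$ and $\mathcal{F}$ be the $(1,0)$- and $(0,1)$-fibres through $R$. Their proper transforms $h_*^{-1}(\mathcal{E})$ and $h_*^{-1}(\mathcal{F})$ are disjoint $(-1)$-curves on $T_{\overline{\mathbb{K}}}$. Since $R$ is a $\mathbb{K}$-point and the Galois action swaps the rulings, these two curves are conjugate, and $\mathrm{Gal}(\mathbf{K}/\mathbb{K})$ acts transitively on the pair. In fact the pair is defined over $\mathbf{K}$ and not over $\mathbb{K}$, because each ruling is defined exactly over $\mathbf{K}$.

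Contracting the pair gives a birational morphism $g\colon T\to S$ over $\mathbb{K}$ onto a surface with $K_S^2=9$. Hence $S$ is a Severi--Brauer surface. It has a $\mathbb{K}$-point, namely the image of any $\mathbb{K}$-point of $T$ (for instance one on the exceptional curve of $h$), so by Theorem~\ref{theorem:Lang-Nishimura} it is $\mathbb{P}^2$. Concretely, $g$ is given by the linear system $|h^*(1,1)-E_R|$: this is projection of the quadric from $R$.

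\textbf{Step 3: identify the blown-up point.} The point blown up by $g$ is $g(h_*^{-1}(\mathcal{E}\cup\mathcal{F}))$. Its two geometric points are defined over $\mathbf{K}$ and swapped by the nontrivial element of $\mathrm{Gal}(\mathbf{K}/\mathbb{K})$. So it is a point of degree $2$ whose splitting field is exactly $\mathbf{K}$.

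\textbf{Step 4: conclude.} Since $T$ is a del Pezzo surface of Picard rank $2$ and both $g$ and $h$ are blowups of points in Sarkisov general position, the diagram is a Sarkisov link of type~\rom{2}. Reading it from $\mathbb{P}^2$ to $X$ gives the link in the statement.

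\textbf{Main obstacle.} The only delicate point is checking that the splitting field of the degree-$2$ point is exactly $\mathbf{K}$ rather than a smaller field. This follows because each ruling class, and hence each of the two curves, is stabilized precisely by $\mathrm{Gal}(\overline{\mathbb{K}}/\mathbf{K})$.
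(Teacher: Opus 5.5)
Your proposal is correct and follows the paper's proof. Both blow up a $\mathbb{K}$-point of $X$, contract the proper transforms of the two Galois-conjugate rulings through it (each defined exactly over $\mathbf{K}$), and read off that $g$ blows up a degree-$2$ point with splitting field $\mathbf{K}$. You also supply details the paper leaves implicit, namely the existence and general position of the $\mathbb{K}$-point and why the target is $\mathbb{P}^2$.

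One small nit: Theorem~\ref{theorem:Lang-Nishimura} only provides a $\mathbb{K}$-point. Concluding $S\simeq\mathbb{P}^2$ needs the fact that a Severi--Brauer surface with a $\mathbb{K}$-point is trivial, though your description of $g$ as projection from $R$ via $|h^*(1,1)-E_R|$ already gives this directly.
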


\begin{proof}
Let $h \colon T \to X$ be a blowup of a $\mathbb{K}$-point $P.$ Let $E_1$ and $E_2$ be two lines on~$X_{\mathbf{K}}$ passing through $P.$ Then every line among  $E_1$ and $E_2$ is defined over $\mathbf{K}.$ Also note that  $E_1+E_2$ is defined over $\mathbb{K}$ and $h^{-1}_*(E_1+E_2)$ is two skew $(-1)$-curves on $T_{\mathbf{K}}.$ Let $g$ be a contraction of $h^{-1}_*(E_1+E_2).$ So we get $g \colon T \to \mathbb{P}^2.$ So as~\mbox{$h^{-1}_*(E_1+E_2)$} is two skew $(-1)$-curves on $T_{\mathbf{K}}$ we get that $g$ is a blowup of a point of degree $2$ with the splitting field $\mathbf{K}.$

\end{proof}

\section{Minimal number of Sarkisov links}\label{section:sarkisov}

In this section for any minimal rational del Pezzo surfaces $X$ and $X'$ we find a minimal number of Sarkisov links such that their composition gives  the birational map between $X$ and $X'.$

\begin{Lemma}\label{lemma:d>5n<2}
 Let $X$ and $X'$ be two minimal rational  del Pezzo surfaces  over $\mathbb{K}$ of degrees~$d$ and $d',$ respectively.   Denote by $n$ the minimal number of Sarkisov links composition of which gives a birational map between $X$ and $X'.$

\begin{enumerate}
\renewcommand\labelenumi{\rm (\roman{enumi})}
\renewcommand\theenumi{\rm (\roman{enumi})}

\item\label{lemma:links(5,9)}
Let $(d,d')=(5,9).$ Then $n=1.$

\item\label{lemma:links(8,9)}
Let $(d,d')=(8,9).$ Then $n=1.$

\item\label{lemma:links(5,8)}
Let $(d,d')=(5,8).$ Then $n=1.$

\item\label{lemma:links(5,5)}
Let $(d,d')=(5,5).$ Then $n \leqslant 2.$

\item\label{lemma:links(8,8)}
Let $(d,d')=(8,8).$ Then $n \leqslant 2.$

\item\label{lemma:links(6,9)}
Let $(d,d')=(6,9).$ Then $n=2.$

\item\label{lemma:links(5,6)}
Let $(d,d')=(5,6).$ Then $n=2.$

\item\label{lemma:links(6,8)}
Let $(d,d')=(6,8).$ Then $n\leqslant 2.$

\item\label{lemma:links(6,6)}
Let $(d,d')=(6,6).$ Then $n\leqslant 2.$

\end{enumerate}

\end{Lemma}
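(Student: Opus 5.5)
Rationality gives every minimal rational del Pezzo surface a $\mathbb{K}$-point by Theorem~\ref{theorem:Lang-Nishimura}. So degree $9$ means $\mathbb{P}^2$, degree $5$ is classified by $\mathbf{M}$ (Lemma~\ref{lemma:dp5M}), degree $6$ by $(\mathbf{K},\mathbf{L})$ (Lemma~\ref{lemma:dp6KL}), and degree $8$ by a quadratic field $\mathbf{K}$ (Lemma~\ref{lemma:dP8Weil}). The plan is to handle each case by chaining the explicit links of Section~\ref{section:existencesarkisov}. At each step we track the splitting fields, so that we land on the prescribed isomorphism class.

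The three cases with $n=1$ are direct.
\begin{itemize}
\item[\ref{lemma:links(5,9)}] Lemma~\ref{lemma:linkd9d5} gives a single link $\mathbb{P}^2\DashedArrow[->,densely dashed] X$.
\item[\ref{lemma:links(8,9)}] Lemma~\ref{lemma:linkd9d8} gives a single link.
\item[\ref{lemma:links(5,8)}] Lemma~\ref{lemma:linkd8d5} gives a link from $X$ (defined by $\mathbf{M}$) to \emph{any} prescribed degree~$8$ surface $\mathrm{R}_{\mathbf{K}'/\mathbb{K}}(\mathbb{P}^1)$. Here we use Lemma~\ref{lemma:existencepointd5p2}: a degree~$2$ point with splitting field $\mathbf{K}'$ exists and is in Sarkisov general position. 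We should make sure the endpoint is the given $X'$; this follows because the splitting field of the blown-up degree~$2$ point is $\mathbf{K}'$ and degree~$8$ surfaces are determined by that field.
\end{itemize}
In each of these cases $n\geqslant 1$ simply because $X\not\simeq X'$, as the degrees differ.

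The bounds $n\leqslant 2$ come from routing through a suitable intermediate surface.
\begin{itemize}
\item[\ref{lemma:links(5,5)}] Compose the link $X\DashedArrow[->,densely dashed]\mathbb{P}^2$ with the link $\mathbb{P}^2\DashedArrow[->,densely dashed] X'$ from \ref{lemma:links(5,9)}.
\item[\ref{lemma:links(8,8)}] Go similarly through $\mathbb{P}^2$.
\item[\ref{lemma:links(6,8)}] Lemma~\ref{lemma:linkd8d6} gives a link $X\DashedArrow[->,densely dashed] X''$ with $X''$ of degree~$8$. Then apply \ref{lemma:links(8,8)}. This yields three links, which is too many. So instead we route through degree~$5$: take the link $X\DashedArrow[->,densely dashed] X''$ from Lemma~\ref{lemma:linkd8d6}, then go from $X''$ to $X'$ via a degree~$5$ surface. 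That also gives three. The better route is the following. Choose a degree~$2$ point with splitting field $\mathbf{K}'$ on $X$; by Lemma~\ref{lemma:existence2pointd6} it is in general position. By Lemma~\ref{lemma:linkd6d6p2} this gives a link to a degree~$6$ surface $X_1$ with data $(\mathbf{K}',\mathbf{L})$. Then Lemma~\ref{lemma:linkd8d6} applied to $X_1$ gives a link to the degree~$8$ surface with field $\mathbf{K}'$, which is $X'$.
\item[\ref{lemma:links(6,6)}] For $X$ with data $(\mathbf{K},\mathbf{L})$ and $X'$ with data $(\mathbf{K}',\mathbf{L}')$, first apply Lemma~\ref{lemma:linkd6d6p2} with a degree~$2$ point of splitting field $\mathbf{K}'$, reaching $(\mathbf{K}',\mathbf{L})$. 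If $\mathbf{L}=\mathbf{L}'$ we are done. Otherwise apply Lemma~\ref{lemma:linkd6d6p3} with a degree~$3$ point of splitting field $\mathbf{L}'$; such a point is in general position by Lemma~\ref{lemma:existence3pointd6}, and the link reaches $(\mathbf{K}',\mathbf{L}')\simeq X'$.
\end{itemize}

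The main obstacle is producing points with prescribed splitting fields: a degree~$2$ point with field $\mathbf{K}'$ on $X$, or a degree~$3$ point with field $\mathbf{L}'$. The field $\mathbf{L}'$ is Galois of degree $3$ or $6$, so in the degree~$6$ case we need a degree~$3$ subfield, whose Galois closure is $\mathbf{L}'$. I would obtain these points from rationality: via a birational map to $\mathbb{P}^2$, take a point of degree $2$ or $3$ on a rational curve defined over $\mathbb{K}$, chosen outside a closed subset. Existence is automatic when $\mathbb{K}$ is infinite. The finite-field case needs separate care, for instance by counting points.

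For the lower bounds in \ref{lemma:links(6,9)} and \ref{lemma:links(5,6)}, Theorem~\ref{th:SarkisovlinksdP} shows there is no link from $\mathcal{D}_6$ to $\{\mathbb{P}^2\}$ or to $\mathcal{D}_5$, so $n\geqslant 2$. For the upper bound in \ref{lemma:links(6,9)}, use Lemma~\ref{lemma:linkd8d6} to reach degree~$8$, then \ref{lemma:links(8,9)}. For \ref{lemma:links(5,6)}, the route via degree~$8$ is Lemma~\ref{lemma:linkd8d5} followed by Lemma~\ref{lemma:linkd8d6}. We need the degree~$8$ surface with $\mathbf{K}$ equal to the $\mathbf{K}$-field of $X'$. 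Running Lemma~\ref{lemma:linkd8d6} from $X'$ produces a degree~$8$ surface with field $\mathbf{K}$, and \ref{lemma:links(5,8)} connects $X$ to any degree~$8$ surface in one link. So $n=2$.
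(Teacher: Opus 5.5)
Your proposal is correct and follows the paper's proof case by case. You use single links from Lemmas~\ref{lemma:linkd9d5}, \ref{lemma:linkd9d8} and~\ref{lemma:linkd8d5} for (i)--(iii), and route through $\mathbb{P}^2$ for (iv)--(v). For (vi)--(vii) you route through a degree~$8$ surface, with the lower bounds read off from the absence of edges in Theorem~\ref{th:SarkisovlinksdP}. For (viii) you use a degree~$2$ link changing $\mathbf{K}$ to $\mathbf{K}'$ followed by Lemma~\ref{lemma:linkd8d6}, and for (ix) a degree~$2$ link followed by a degree~$3$ link. Your extra paragraph on producing points of degree~$2$ or~$3$ with prescribed splitting field spells out something the paper only uses implicitly when it invokes Lemmas~\ref{lemma:linkd8d5}, \ref{lemma:linkd6d6p2} and~\ref{lemma:linkd6d6p3}.
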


\begin{proof}

Note that by Theorem~\ref{th:SarkisovlinksdP} we get that if birational map $f$ between two minimal del Pezzo surfaces  is a composition of Sarkisov links $\chi_1, \chi_2, \ldots, \chi_n$ and at least one of $\chi_i$ is not of type~\rom{2}, then $n \geqslant 3.$ So in the proof we consider only Sarkisov links of type~\rom{2}.

Assertion~\ref{lemma:links(5,9)} immediately follows from Lemma~\ref{lemma:linkd9d5} and assertion~\ref{lemma:links(8,9)} immediately follows from Lemma~\ref{lemma:linkd9d8}.

Let us prove assertion~\ref{lemma:links(5,8)}. From Lemma~\ref{lemma:linkd8d5} we get a Sarkisov link of type~\rom{2} between any minimal del Pezzo surface $X$ of degree~$5$ and any minimal del Pezzo surface~$X'$ of degree~$8.$

Let us prove assertions~\ref{lemma:links(5,5)} and~\ref{lemma:links(8,8)}.   By Lemmas~\ref{lemma:linkd9d5} and~\ref{lemma:linkd9d8} there are two Sarkisov links of type~\rom{2}  
$$
\chi_1 \colon X  \DashedArrow[->,densely dashed    ]  \mathbb{P}^2 \;\; \text{and} \;\; \chi_2  \colon \mathbb{P}^2 \DashedArrow[->,densely dashed    ] X'.
$$
 So the composition~$\chi_2 \circ \chi_1$ gives the desired estimates.

Let us prove assertion~\ref{lemma:links(6,9)}. By Theorem~\ref{th:SarkisovlinksdP} there are no Sarkisov links between minimal del Pezzo surface $X$ of degree $6$  and $\mathbb{P}^2.$ However, by Lemmas~\ref{lemma:linkd8d6} and~\ref{lemma:linkd9d8} we get two Sarkisov links of type~\rom{2}
$$
\chi_1 \colon X  \DashedArrow[->,densely dashed    ]  X''  \;\; \text{and} \;\; \chi_2  \colon X'' \DashedArrow[->,densely dashed    ] \mathbb{P}^2,
$$
where $X''$ is a minimal rational del Pezzo surface of degree~$8.$ So the composition~\mbox{$\chi_2 \circ \chi_1$}  gives us $n=2.$

Let us prove assertion~\ref{lemma:links(5,6)}. By Theorem~\ref{th:SarkisovlinksdP} there are no Sarkisov links between minimal del Pezzo surfaces $X$ and $X'$ of degree $5$ and $6,$ respectively.  By Lemma~\ref{lemma:linkd8d6} there is a Sarkisov link of type~\rom{2} between minimal del Pezzo surface $X'$ of degree~$6$  and some minimal del Pezzo surface of degree~$8.$ Let $X''$ be such a  minimal del Pezzo surface of degree~$8$ and 
$$
\chi_1 \colon X'' \DashedArrow[->,densely dashed    ] X'
$$
be a Sarkisov link of type~\rom{2} between $X''$ and $X'.$ By Lemma~\ref{lemma:linkd8d5}  there is a Sarkisov link of type~\rom{2}  between minimal del Pezzo surface $X$ of degree $5$  and any minimal del Pezzo surface of degree~$8.$ Let
$$
\chi_2 \colon X \DashedArrow[->,densely dashed    ] X''
$$
be a Sarkisov link of type~\rom{2} between $X$ and  $X''.$  So the composition~\mbox{$\chi_1 \circ \chi_2$}  gives us $n=2.$

Let us prove assertion~\ref{lemma:links(6,8)}. By Lemma~\ref{lemma:dp6KL} the minimal rational  del Pezzo surface~$X$ of degree $6$  is defined by two fields $(\mathbf{K}, \mathbf{L})$ from Severi--Brauer data.   By Lemma~\ref{lemma:dP8Weil} we have $X' \simeq \mathrm{R}_{\mathbf{K}'/\mathbb{K}}(\mathbb{P}^1),$ where $\mathbb{K} \subset \mathbf{K}'$ is a quadratic field extension which is uniquely defined by $X'.$ By Lemma~\ref{lemma:linkd6d6p2}  there is a Sarkisov link of type~\rom{2}
$$
\chi_1 \colon X  \DashedArrow[->,densely dashed    ] X''
$$ 
between $X$ and a minimal rational del Pezzo surface $X''$ of degree~$6$ which is  defined  by the pair of fields $(\mathbf{K}',\mathbf{L}).$  By Lemma~\ref{lemma:linkd8d6}  there is a Sarkisov link of type~\rom{2}
$$
\chi_2 \colon X''  \DashedArrow[->,densely dashed    ] X'
$$ 
between $X''$ and  $X'\simeq \mathrm{R}_{\mathbf{K}'/\mathbb{K}}(\mathbb{P}^1).$ So the composition~\mbox{$\chi_2 \circ \chi_1$} gives us $n \leqslant 2.$

Let us prove assertion~\ref{lemma:links(6,6)}. Let $X$ and $X'$ be two minimal rational del Pezzo surfaces of degree~$6.$ By Lemma~\ref{lemma:dp6KL} the minimal rational  del Pezzo surfaces~$X$ and~$X'$ of degree~$6$ up to isomorphism  are defined by pairs of fields $(\mathbf{K}, \mathbf{L})$ and~$(\mathbf{K}', \mathbf{L}')$ from Severi--Brauer data, respectively. Then by Lemma~\ref{lemma:linkd6d6p2}   we get that if~\mbox{$\mathbf{K} \neq \mathbf{K}'$} to change the field $\mathbf{K}$ to $\mathbf{K}'$ one needs one Sarkisov link and by Lemma~\ref{lemma:linkd6d6p3} if $\mathbf{L} \neq \mathbf{L}'$  to change the field $\mathbf{L}$ to~$\mathbf{L}'$ one needs another Sarkisov link. So we get $n \leqslant 2.$

\end{proof}

\section{Proof of the main result}\label{section:proof}

In this section we prove Theorem~\ref{th:Sarkisovperfect}.

\begin{proof}[Proof of Theorem \ref{th:Sarkisovperfect}]
Let $n \in \mathbb{Z}_{\geqslant 0}$ be a minimal number with the property that there is a birational map between del Pezzo surfaces  $f  \colon X \DashedArrow[->,densely dashed    ] X'$  which is a composition of $n$ Sarkisov links. Denote by $d=K_X^2$ the degree of $X$ and by $d'=K^2_{X'}$ the degree of $X'.$  It is well known that
$$
1 \leqslant d \leqslant 9 \;\; \text{and} \;\; 1 \leqslant d' \leqslant 9.
$$
\noindent Also note that $d$ and $d'$ are different from~$7,$ because del Pezzo surface of degree~$7$ is never minimal.

If $1 \leqslant d \leqslant 3,$ then by~\cite[Theorems 1.6, 4.4 and 4.5]{Isk} we get that $X \simeq X'.$   If~\mbox{$d=4,$} then by~\cite[Theorems 1.1 and 1.2]{ShTr} we get that  $X \simeq X'.$  This means that in the case $1 \leqslant d \leqslant 4,$ we obtain~\mbox{$n=0.$}

Therefore, we get that if $d \geqslant 5,$ then $d' \geqslant 5.$

Assume that $X$ is not rational. Then by Theorems~\ref{th:dp5point} and~\ref{theorem:rationalityd>5}  we get that 
$$
d, d' \in \{6,8,9\}.
$$
\noindent If $d=6,$ then by Proposition~\ref{prop:yas} we have that~$X'$ is also minimal  del Pezzo surface of degree $6$  and $n \leqslant 1.$ If $d=8,$ then by~\cite[Theorem~1.6]{Trepalin} we get that~\mbox{$X \simeq X'$} and thus,~\mbox{$n=0.$} If $d=9,$  then by Lemma~\ref{lemma:SB} we get $n \leqslant 1.$

If $X$ is rational, then by Lemma~\ref{lemma:d>5n<2} we get $n \leqslant 2.$ 

The last part of the theorem follows from Corollary~\ref{corollary:dp>2}.

\end{proof}

%
%
%

\providecommand{\bysame}{\leavevmode\hbox to3em{\hrulefill}\thinspace}
\providecommand{\MR}{\relax\ifhmode\unskip\space\fi MR }
\providecommand{\MRhref}[2]{%
  \href{http://www.ams.org/mathscinet-getitem?mr=#1}{#2}
}
\providecommand{\href}[2]{#2}

\end{document}